\documentclass[notitlepage, 11pt, a4paper]{article}

\usepackage[utf8]{inputenc}
\usepackage[T1]{fontenc}
\usepackage[natbib=true, backend=biber, style=numeric, maxbibnames=9]{biblatex}
\usepackage{csquotes}
\usepackage{amsmath}
\usepackage{amsfonts}
\usepackage{amssymb}
\usepackage{amsthm}
\usepackage[affil-it]{authblk}
\usepackage[normalem]{ulem}
\usepackage{dsfont}
\usepackage{float}
\usepackage{times}
\usepackage{bbm}
\usepackage{mathtools}
\usepackage{caption}
\usepackage{mathrsfs}
\usepackage{comment}
\usepackage{bm}
\usepackage{enumerate}
\usepackage{lipsum}
\usepackage{titling}
\usepackage{romannum}
\usepackage[colorlinks,linkcolor=blue,citecolor=blue,urlcolor=blue]{hyperref}
\usepackage[capitalise]{cleveref}
\usepackage[left=3cm, right=3cm, bottom=2cm, top=2cm]{geometry}
\usepackage[toc,page]{appendix}

\newcommand*\diff{\mathop{}\!\mathrm{d}}

\newcommand{\cadlag}{c\`adl\`ag }

\newcommand{\N}{\ensuremath{\mathbb{N}}}

\newcommand{\R}{\ensuremath{\mathbb{R}}}

\newcommand{\1}{\ensuremath{\mathds{1}}}

\newcommand\norm[1]{\left\lVert#1\right\rVert}
\newcommand\nnorm[1]{|#1|}
\newcommand\nnnorm[1]{\left|#1\right|} 

\newcommand{\prob}{\ensuremath\mathbb{P}}

\newcommand{\E}{\mathbb{E}}

\renewcommand{\geq}{\geqslant}
\renewcommand{\leq}{\leqslant}

\newcommand{\ind}{\mathbbm{1}}
\newcommand{\pref}{\ensuremath{p_\text{ref}}}

\newtheorem{theorem}{Theorem}[section]

\newtheorem{proposition}[theorem]{Proposition}
\newtheorem{corollary}[theorem]{Corollary}
\newtheorem{assumption}[theorem]{Assumption}

\theoremstyle{definition}

\theoremstyle{remark}

\numberwithin{equation}{section}
\crefname{example}{Example}{Examples}

\begin{document}
\pagenumbering{arabic}
\title{Probabilistic estimates for a system of\\noisy integrate-and-fire neurons}

\author{Ben Hambly\thanks{Mathematical Institute, University of Oxford, United Kingdom, Email: hambly@maths.ox.ac.uk},
\and Alda\"ir Petronilia\thanks{Mathematical Institute, University of Oxford, United Kingdom, Email: aldairpetronilia@gmail.com},
\and Christoph Reisinger\thanks{Mathematical Institute, University of Oxford, United Kingdom, Email: christoph.reisinger@maths.ox.ac.uk},
\and Andreas S{\o}jmark\thanks{Department of Statistics, London School of Economics, United Kingdom, Email: a.sojmark@lse.ac.uk}}
\date{\today} 

\maketitle

 \begin{abstract}
 In this note, we establish various probabilistic estimates for an interacting particle system that describes the evolution of the membrane potentials in a network of excitatory integrate-and-fire neurons, which are subject to both idiosyncratic and common noise. These estimates serve to support a separate work that studies the large population limit and the corresponding stochastic Fokker--Planck equation for the membrane potential density.
 \end{abstract}

\section{Introduction}\label{sec: INTRODUCTION TO THE SUPPLEMENTARY MATERIAL}

In this note, we study a finite particle system which provides a rigorous formulation of the dynamics for a coupled system of noisy integrate-and-fire neurons derived  in \cite{ostojic2009synchronization}. Furthermore, the system we consider incorporates several generalisations, including a gradual spike transmission and an absolute refractory period for the membrane potential of each neuron, followed by a randomised reset below the rest potential. Throughout, we take the rest potential to be zero.

We provide a series of probabilistic estimates that, broadly, fall into the following three categories. Firstly, we derive moment bounds and sub-Gaussian tails for the membrane potentials and the cumulative spike count (i.e., the number of times that the neurons fire), uniformly in the number of neurons $N$. Moreover, a change of measure then allows us to show that the probability of a neuron firing $k$ or more times in a given interval decays at a Gaussian rate in $k$. Secondly, we establish some control on the empirical measures in expectation, showing that the mass is sufficiently concentrated and decays more than linearly near the firing threshold, uniformly in $N$. Thirdly, we derive bounds for the increments of the cumulative spike count, which, as $N\rightarrow\infty$, can serve to establish its tightness and ensure the continuity of its limit points.

The purpose of these results is to support our analysis in \citep{MAIN_PAPER_ON_THE_THE_NEURONS_WHICH_IS_YET_TO_BE_COMPLETED}, where we consider the weak convergence as $N\rightarrow \infty$ and study the well-posedness of the resulting stochastic Fokker--Planck equation for the membrane potential density in the large population limit.

\subsection{The particle system}

We fix a filtered probability space $(\Omega, \mathcal{F},(\mathcal{F}_t)_{t \ge 0},\prob)$, with a filtration $(\mathcal{F}_t)_{t \ge 0}$ that satisfies the usual conditions. Furthermore, we let this space support an infinite collection of mutually independent random variables, $\{W^i,\varsigma_k^i,\xi_k^i\}_{i,k \ge 0}$, where each $W^i$ is an $\mathcal{F}_t$-standard Brownian motion, while $\{\varsigma_k^i\}_{k, i}$ and $\{\xi_k^i\}_{k,i}$ are i.i.d.~$\mathcal{F}_0$-measurable random variables on $\R$ that satisfy Assumption \ref{ass: ASSUMPTIONS FROM SOJMARK SPDE PAPER APPLIED TO THE NEUROSCIENCE SETTING} below. When the context is clear, we shall simply write $\xi$ and $\varsigma$ for $\xi_k^i$ and $\varsigma_k^i$, respectively. The particle system with $N$ neurons is then given by

\begin{equation}\label{eq: GENERALISED FINITE PARTICLE SYSTEM IN THE INTEGRATE AND FIRE NEURON MODELS}
	\left\{
\begin{array}{r@{{}={}}l}
	\diff{}X_t^{i} &\begin{array}[t]{@{}l}
		b(t,X_t^{i},\nu_t^{N},\mathfrak{f}_t^N)\ind_{\{X_t^i < 0\}}\diff{}t + \sigma(t,\,X_t^{i})\rho(t,\nu^N_t,\mathfrak{f}_t^N)\ind_{\{X_t^i < 0\}}\diff{}W_t^0\\[0.25em]
		+ \  
		\sigma(t,\,X_t^{i})\sqrt{1 \!-\! \rho^2(t,\nu^N_t,\mathfrak{f}_t^N)}\ind_{\{X_t^i < 0\}}\diff{}W_t^i - \diff \sum_{k \ge 1} \xi_k^i\ind_{[0,t]}(\tau_k^i + \varsigma_k^i), \\[0.25em]
	\end{array} \\[0.25em]
	\tau_k^{i} & \inf\{t > \tau_{k-1}^{i} + \varsigma_{k-1}^{i} \; : \; X_{t-}^{i} \ge 0\},\qquad \tau_0^{i} = 0,\\[0.25em]\nu_t^N & \frac{1}{N}\sum_{i=1}^N \delta_{X_t^{i}}\ind_{\{X_t^i < 0\}},\quad \mathfrak{f}_t^N = \int_0^t \mathfrak{K}(t-s) \diff{F_s^N}\\[0.25em]
	F_t^N & \frac{1}{N}\sum_{i=1}^N J_t^{i},\quad J_t^{i} = \sum_{k \ge 1} \ind_{[0,t]}(\tau_k^i),
\end{array}
\right.
\end{equation}
where we shall furthermore need to keep track of the (rescaled) number of resets through the following quantities
\begin{equation}\label{eq:F-DN,J-Di}
	F_t^{D,N} = \textstyle \frac{1}{N}\sum_{i=1}^N J_t^{D,i},\quad 
	J_t^{D,i} = \textstyle\sum_{k \ge 1} \ind_{[0,t]}(\tau_k^i + \varsigma_k^i).
\end{equation}

The central object of study is $\nu^N_t$, which is the empirical measure of the neurons that are not currently in a refractory state. We refer to $\tau_k^i$ as the $k$'th spike of the $i$'th neuron, and the random variable $\varsigma_k^i$ then represents the $k$'th refractory period of the $i$'th neuron, i.e., a period of time after its $k$'th spike where the neuron is held at zero and is not part of the system. Moreover, the random variable $-\xi_k^i$ gives the membrane potential of the $i$'th neuron following its $k$'th refractory period. Finally, we refer to $F^N_t$ as the (rescaled) cumulative spike count, since it returns the total number of spikes in $[0,t]$ averaged over the $N$ particles, and, associated to this, we refer to $\mathfrak{f}_t^N$ as the spike transmission rate, which models how the dynamics of the membrane potentials are affected by spikes. For further details, we refer to the introduction of \citep{MAIN_PAPER_ON_THE_THE_NEURONS_WHICH_IS_YET_TO_BE_COMPLETED}.

Throughout, we work under the following assumptions.

\begin{assumption}[Structural assumptions]\label{ass: ASSUMPTIONS FROM SOJMARK SPDE PAPER APPLIED TO THE NEUROSCIENCE SETTING}
	We assume that the following structural conditions are satisfied by the particle system \eqref{eq: GENERALISED FINITE PARTICLE SYSTEM IN THE INTEGRATE AND FIRE NEURON MODELS}, for some $\gamma, \, R > 0$:
	\begin{enumerate}[(i)]
		\item \label{ass: ASSUMPTIONS FROM SOJMARK SPDE PAPER APPLIED TO THE NEUROSCIENCE SETTING ONE}(Growth and differentiability) The map $x \mapsto b(t,x,\mu,f)$ is $\mathcal{C}^2(\R)$ and $(t,x) \mapsto \sigma(t,x)$ is $\mathcal{C}^{1,2}([0,T]\times \R)$. Moreover, there exist $C_b,C_\sigma > 0$ such that
		\begin{align*}
			&\nnorm{b(t,x,\mu,f)} \le C_b(1 + \nnorm{x}+ \langle \mu,\nnorm{\cdot}\rangle + \nnorm{f}),\quad\nnorm{\partial_x^{(n)}b(t,x,\mu,f)}\le C_b,\quad n = 1,\,2,\\
			&\nnorm{\sigma(t,x)}\le C_\sigma,\quad\nnorm{\partial_t\sigma(t,x)}\le C_\sigma,\quad \nnorm{\partial_x^{(n)}\sigma(t,x)}\le C_\sigma,\quad n = 1,\,2.
		\end{align*}
		\item \label{ass: ASSUMPTIONS FROM SOJMARK SPDE PAPER APPLIED TO THE NEUROSCIENCE SETTING TWO}(Lipschitzness) There exists $C > 0$ such that
		\begin{align*}
			\nnorm{b(t,x,\mu,f) - b(t,\tilde{x},\tilde{\mu},\tilde{f})} &\le C(\nnorm{x - \tilde{x}} + d_0(\mu,\tilde{\mu}) + \nnorm{f - \tilde{f}}),\\
			\nnorm{\sigma(t,x) - \sigma(t,\tilde{x})} &\le C(\nnorm{x - \tilde{x}}),\\
			\nnorm{\rho(t,\mu,f) - \rho(t,\tilde{\mu},\tilde{f})} &\le C(d_1(\mu,\tilde{\mu}) + \nnorm{f - \tilde{f}}),
		\end{align*}
		where
		\begin{align*}
			d_0(\mu,\,\tilde{\mu}) &= \sup\left\{\nnnorm{\langle\mu - \tilde{\mu},\, \psi\rangle}\,:\, \norm{\psi}_{\operatorname{Lip}} \le 1,\, \nnnorm{\psi(0)} \le 1\right\},\\
			d_1(\mu,\,\tilde{\mu}) &= \sup\left\{\nnnorm{\langle\mu - \tilde{\mu},\, \psi\rangle}\,:\, \norm{\psi}_{\operatorname{Lip}} \le 1,\, \norm{\psi}_{\infty} \le 1\right\}.
		\end{align*}
		\item \label{ass: ASSUMPTIONS FROM SOJMARK SPDE PAPER APPLIED TO THE NEUROSCIENCE SETTING THREE} (Non-degeneracy) $C_\sigma$ above is chosen such that $0 < C_\sigma^{-1} \le \sigma(t,x)$ and that $0 \le \rho(t,\mu,f)\le 1 - \gamma$.
		\item \label{ass: ASSUMPTIONS FROM SOJMARK SPDE PAPER APPLIED TO THE NEUROSCIENCE SETTING FIVE} (Spike transmission) The kernel $\mathfrak{K}$ is non-negative with $\norm{\mathfrak{K}}_1 = 1$ and $\mathfrak{K} \in \mathcal{W}^{1,1}_0(\R_+)$, the Sobolev space with one weak derivative in $L^1$ and zero trace.
		\item \label{ass: ASSUMPTIONS FROM SOJMARK SPDE PAPER APPLIED TO THE NEUROSCIENCE SETTING FOUR}(Random inputs)
		The random variables $\{X_0^i, W^i,\varsigma_k^i,\xi_k^i\}_{i,k \ge 0}$ are all mutually independent of each other, and the  $\{X_0^i\}_{i\ge 1}$, $\{W^i\}_{i\ge 0}$, $\{\varsigma_k^i\}_{i,k \ge 0}$, and $\{\xi_k^i\}_{i,k \ge 0}$ form i.i.d.~sequences. The common law $\nu_0$ of the starting values $X_0^i$ has a density $V_0$ in $L^2(-\infty,0)$ and
		\begin{equation*}
			\nu_0(-\infty,-\lambda) = O(e^{-\gamma \lambda^2}) \quad \text{as} \quad \lambda \to \infty.
		\end{equation*}
		The common law of the refractory periods $\varsigma_k^i$ has a density $\pref \in \mathcal{W}^{1,1}_0(\R_+)$. Writing $\mu_\mathrm{res}$ for the common law of the reset positions $-\xi^{i}_k$, we have $\operatorname{supp}(\mu_\mathrm{res}) \subseteq [-R,-\gamma]$.
	\end{enumerate}
\end{assumption}

Under these assumptions, the particle system admits a pathwise unique solution \cite[Prop.~3.1]{MAIN_PAPER_ON_THE_THE_NEURONS_WHICH_IS_YET_TO_BE_COMPLETED}. Moreover, by \cite[Lemma~3.3]{MAIN_PAPER_ON_THE_THE_NEURONS_WHICH_IS_YET_TO_BE_COMPLETED}, we have that $\varsigma_k^i$ is independent of $\tau_k^i$ for any $i$ and $k$ which we make use of in several of the proofs.

\section{Tail Estimates and Change of Measure}\label{SECT:tail_change-measure}

We can show that if the initial law of our process is sub-Gaussian, then all the other processes of interest are also sub-Gaussian. To do this, it is more convenient to work with a process that tracks each particle continuously, denoted by $Z^i$. That is, $Z^i$ does not implement the resets but instead lives on the whole space. This reformulation allows us to employ Gr\"onwall-type arguments and, in turn, will give us control over $X^i$. 

More precisely, we let $Z_t^i \coloneqq X_t^i + \sum_{k \ge 1} \xi_k^i \ind_{[0,t]}(\tau_k^i +\varsigma_k^i)$. One readily sees that the $K^{\textnormal{th}}$ time $X^i$ hits the boundary at $0$ corresponds to the first time $Z^i$ hits $\sum_{{k} = 1}^{K - 1} \xi_{{k}}^i$. Thus, $\sum_{{k} = 1}^{J_t^i - 1} \xi_{{k}}^i \leq \sup_{s \le t} Z_s^i$ and so, since each $\xi^i_k \leq R$, we have
\begin{equation}\label{eq: BOUNDS ON THE DELAYED AND RANDOMISED JUMPS BY THE Z PARTICLE}
	\sum_{{k} \ge 1} \xi_{{k}}^i\ind_{[0,t]}(\tau_{{k}}^i  +\varsigma_{{k}}^i) \le \sum_{{k} \ge 1} \xi_{{k}}^i\ind_{[0,t]}(\tau_{{k}}^i) \le \sup_{s \le t} Z_s^i +R.
\end{equation}
Furthermore, if $X^i$ has hit the boundary $K$ times by time $t$, we have $J_t^i = K$. Therefore, as $\xi_{{k}}^{{i}}$ is lower bounded by $\gamma > 0$, we have $K \le \gamma^{-1}\sum_{{k} = 1}^{K - 1} \xi_{{k}}^i + 1$, and hence, by the above reasoning, we also deduce
\begin{equation}\label{eq: BOUNDS ON THE JUMPS OF PARTICLE I AND THE FIRING FUNCTION BY THE Z PARTICLE}
	J_t^i \le \gamma^{-1} \sup_{s \le t} Z_s^i + 1 \qquad \textnormal{and} \qquad F_t^N \le \frac{1}{N\gamma} \sum_{i =1}^N \sup_{s \le t} Z_s^i + 1.
\end{equation}
Using this observation, we may deduce by a Gr\"onwall argument that $\sup_{s \le T} \nnnorm{Z_s}^p$ is finite for any $p \ge 1$. This is the content of the following proposition.

\begin{proposition}\label{prop: BOUNDS ON THE SUP OF THE P TH MOMENTS OF THE SUP NORM OF Z}
	For any $p \ge 1$, there exists a constant $C > 0$ that depends on $p,\,b,\,\mathfrak{K},\,\sigma$ and $T$ but is independent of $N$ and $i$ such that $\E\left[\sup_{s \le T} \nnnorm{Z_s^i}^p\right] \le C$.
\end{proposition}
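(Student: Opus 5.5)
We write the dynamics of $Z^i$ as a continuous semimartingale without resets,
\[
Z_t^i = X_0^i + \int_0^t b(s,X_s^i,\nu_s^N,\mathfrak{f}_s^N)\ind_{\{X_s^i<0\}}\diff s + M_t^i,
\]
where $M^i$ is the stochastic integral against $W^0$ and $W^i$ with integrands $\sigma\rho\ind$ and $\sigma\sqrt{1-\rho^2}\,\ind$. These integrands are bounded by $C_\sigma$, so the quadratic variation satisfies $\diff\langle M^i\rangle_t\le C_\sigma^2\diff t$. By the Burkholder--Davis--Gundy inequality, $\E[\sup_{s\le T}|M^i_s|^p]\le C_pC_\sigma^pT^{p/2}$. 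The initial term is controlled since $\nu_0$ has sub-Gaussian tails on $(-\infty,0)$, so $\E|X_0^i|^p<\infty$. Everything therefore reduces to controlling the drift by quantities expressible through $\sup Z$.

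By the growth bound in Assumption (i),
\[
|b|\le C_b\bigl(1+|X_s^i|+\langle\nu_s^N,|\cdot|\rangle+|\mathfrak{f}_s^N|\bigr).
\]
Each term is bounded through $Z$ as follows.

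\textbf{The $|X^i_s|$ term.} From $X^i_s=Z^i_s-\sum_k\xi^i_k\ind_{[0,s]}(\tau^i_k+\varsigma^i_k)$ and \eqref{eq: BOUNDS ON THE DELAYED AND RANDOMISED JUMPS BY THE Z PARTICLE}, we get $|X^i_s|\le |Z^i_s|+\sup_{r\le s}Z^i_r+R\le 2\sup_{r\le s}|Z^i_r|+R$.

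\textbf{The empirical-measure term.} Averaging the same bound gives $\langle\nu_s^N,|\cdot|\rangle\le \frac1N\sum_j(2\sup_{r\le s}|Z^j_r|+R)$.

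\textbf{The spike-transmission term.} Since $\mathfrak{K}\in\mathcal W^{1,1}_0$, it is bounded with $\|\mathfrak{K}\|_\infty\le\|\mathfrak{K}'\|_1$, because its trace vanishes. Also $F^N$ is nondecreasing. Hence $|\mathfrak{f}^N_s|\le\|\mathfrak{K}\|_\infty F^N_s$, and by \eqref{eq: BOUNDS ON THE JUMPS OF PARTICLE I AND THE FIRING FUNCTION BY THE Z PARTICLE} we obtain $F^N_s\le \frac{1}{N\gamma}\sum_j\sup_{r\le s}|Z^j_r|+1$.

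Set $Y^i_t=\sup_{s\le t}|Z^i_s|$. Combining the three bounds,
\[
Y^i_t\le |X_0^i|+\sup_{s\le T}|M^i_s|+C\int_0^t\Bigl(1+Y^i_s+\frac1N\sum_{j=1}^N Y^j_s\Bigr)\diff s.
\]
We raise this to the $p$-th power and use Jensen/Hölder on the time integral. Taking expectations and using exchangeability of the particles, we have $\E[(\frac1N\sum_jY^j_s)^p]\le\frac1N\sum_j\E[(Y^j_s)^p]=\E[(Y^i_s)^p]$. This yields
\[
\E[(Y^i_t)^p]\le C+C\int_0^t\E[(Y^i_s)^p]\diff s,
\]
and Grönwall's lemma then gives the claim with a constant independent of $N$ and $i$. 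Exchangeability holds because the initial data and noises are i.i.d. and the system is symmetric, so pathwise uniqueness implies the joint law is invariant under permutations. If one prefers to avoid this argument, one can apply Grönwall directly to $\max_i\E[(Y^i_t)^p]$ instead.

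\textbf{Main obstacle.} Grönwall requires knowing a priori that $\E[(Y^i_t)^p]<\infty$. I would handle this by localisation. Introduce stopping times $\theta_n=\inf\{t:\max_jY^j_t\ge n\}$ and run the argument on $t\wedge\theta_n$. Here one must check that $F^N$ stays finite before $\theta_n$; this follows from \eqref{eq: BOUNDS ON THE JUMPS OF PARTICLE I AND THE FIRING FUNCTION BY THE Z PARTICLE}, since there are finitely many spikes before $\theta_n$. Then $\theta_n\to\infty$ almost surely, because the solution is global by \cite[Prop.~3.1]{MAIN_PAPER_ON_THE_THE_NEURONS_WHICH_IS_YET_TO_BE_COMPLETED}, and Fatou's lemma concludes. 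A secondary subtlety is that the bound for $\mathfrak f^N$ must only use $F^N$ up to time $s$, which holds since the convolution runs over $[0,s]$.
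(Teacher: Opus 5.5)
Your proof is correct. The pathwise inequality for $Y^i_t=\sup_{s\le t}|Z^i_s|$ is the same one the paper derives. You differ only in how you close the Grönwall loop.

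\textbf{The paper's route.} It never passes to expectations before applying Grönwall. It first averages over $j$ and applies Grönwall almost surely to $\frac1N\sum_j\sup_{s\le t}|Z^j_s|$. It then substitutes this back into the inequality for particle $i$ and applies Grönwall a second time. This yields the almost-sure bound \eqref{eq: UPPERBOUND ON THE SUP OF ZS FOR ANY I}, in terms of $|X^i_0|$, $\sup_{s\le t}|Y^i_s|$ and their empirical averages. Only after that do expectations, BDG and the moments of $\nu_0$ enter. Pathwise Grönwall only needs the continuous paths of $Z$ to be finite. So this route needs neither your localisation at $\theta_n$, nor exchangeability, nor a maximum over $i$ to decouple the mean-field term.

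\textbf{Your route.} You apply Grönwall at the level of $p$-th moments. Jensen bounds $\E[(\frac1N\sum_j Y^j_s)^p]$ by $\max_j\E[(Y^j_s)^p]$, and stopping at $\theta_n$ guarantees finiteness. This is standard and fully valid for the statement as posed. The $\max_i$ variant is the cleaner one, since it avoids appealing to permutation invariance of the law.

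\textbf{What your route does not give.} It does not produce the almost-sure comparison \eqref{eq: UPPERBOUND ON THE SUP OF ZS FOR ANY I}. The paper reuses that bound in \cref{prop: UNIFORM BOUNDS ON THE EXPONENTIAL OF GAMMA HAT T} to get exponential moments of $\hat\Gamma^i_T$, and hence the sub-Gaussian estimates. A naive moment-level Grönwall gives constants that grow far too fast in $p$ to be summed into an exponential moment. So for the later estimates you would have to return to a pathwise bound anyway.
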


\begin{proof}
	By the definition of $Z^i$ above and  \eqref{eq: BOUNDS ON THE DELAYED AND RANDOMISED JUMPS BY THE Z PARTICLE}, we have
	\[ \displaystyle \nnnorm{X_s^i} \le R + 2 \sup_{u \le s} \nnnorm{Z_u^i} ,
	\quad \displaystyle \frac{1}{N} \sum_{i = 1}^N \nnnorm{X_s^i} \le R + \frac{2}{N} \sum_{i =1}^N \sup_{u \le s} \nnnorm{Z_u^i}.
	\]
    Moreover, integrating by parts in the definition of $\mathfrak{f}_s^N$, using $\mathfrak{K} \in \mathcal{W}^{1,1}_0(\R_+)$, it follows from \eqref{eq: BOUNDS ON THE JUMPS OF PARTICLE I AND THE FIRING FUNCTION BY THE Z PARTICLE} that
    \[
    \displaystyle \nnnorm{\mathfrak{f}_s^N} \le \frac{\norm{\mathfrak{K}^\prime}_1}{N\gamma} \sum_{i =1}^N \sup_{u \le s} \nnnorm{Z_u^i} + {\norm{\mathfrak{K}^\prime}}_1.
    \]
	Setting $c_1 = \max \{\gamma^{-1}{\norm{\mathfrak{K}^\prime}_1} + 2, {\norm{\mathfrak{K}^\prime}_1} + 1,2\}C_b$, by the linear growth condition on $b$,
	\begin{align}\label{eq: UPPER BOUND ON B IN TERMS OF Z AND ITS EMPIRICAL AVERAGE}
		\begin{split}
			\nnorm{b(s,\,X_s^i,\nu_s^N,\mathfrak{f}_s^N)} &\le C_b\left(1 + \nnnorm{X_s^i} + \langle\nu^N_s,\nnnorm{\cdot}\rangle + \nnnorm{\mathfrak{f}_s^N}\right)\\
			&\le c_1\left(1 + \sup_{u \le s} \nnnorm{Z_u^i} + \frac{1}{N} \sum_{j =1}^N \sup_{u \le s} \nnnorm{Z_u^j}\right).
		\end{split}
	\end{align}
	By taking norms in \eqref{eq: GENERALISED FINITE PARTICLE SYSTEM IN THE INTEGRATE AND FIRE NEURON MODELS} and using the triangle inequality, we get for any $t \le T$,
	\begin{align}
		\nnorm{Z_t^i} &= \nnnorm{X_0^{i} + \int_0^t b(s,\,X_s^i,\nu_s^N,\mathfrak{f}_s^N)\diff{s} + Y_t^{i}}\notag\\
		&\le \nnnorm{X_0^{i}} + c_1\int_0^t 1 + \sup_{u \le s} \nnnorm{Z_u^i} + \frac{1}{N} \sum_{j =1}^N \sup_{u \le s} \nnnorm{Z_u^j}\diff{}s + \nnorm{Y_t^{i}}\notag\\
		&\le \nnorm{X_0^{i}} + tc_1+  c_1\int_0^t \sup_{u \le s} \nnorm{Z_u^i} + \frac{1}{N}\sum_{j = 1}^N \sup_{u \le s} \nnorm{Z_u^j} \diff{}s + \sup_{s\le t}\nnnorm{Y_s^i},\label{eq: FIRST EQUATION IN THE PROOF THAT A UNIQUE EQUATION EXISTS TO THE SYSTEM OF MOLLIFIED AND SMOOTHED DIFFUSIONS WITHOUT RESETS}
	\end{align}
	where $Y_t^i = \int_0^t \sigma(s,X_s^i)\ind_{\{X_s^i < 0\}}\diff{}B_s^i$ with $\diff{}B_s^i = \sqrt{1 - \rho^2_s}\diff{} W_s^i+\rho_s\diff{}W_s^0$ and $\rho_s = \rho(s,\nu^N_s, \mathfrak{f}_s^N)$. By taking the empirical mean above, we deduce
	\begin{equation*}
		\frac{1}{N}\sum_{j = 1}^N \sup_{s \le t} \nnorm{Z_s^j} \le tc_1 + \frac{1}{N}\sum_{j = 1}^N \left(\nnorm{X_0^j} + \sup_{s \le t} \nnorm{Y_s^{j}}\right) + 2c_1\int_0^t \frac{1}{N}\sum_{j = 1}^N \sup_{u \le s} \nnorm{Z_u^j} \diff{}s.
	\end{equation*}
	Then, by Gr\"onwall's integral lemma, we obtain
	\begin{equation}\label{eq: SECOND EQUATION IN THE PROOF THAT A UNIQUE EQUATION EXISTS TO THE SYSTEM OF MOLLIFIED AND SMOOTHED DIFFUSIONS WITHOUT RESETS}
		\frac{1}{N}\sum_{j = 1}^N \sup_{s \le t} \nnorm{Z_s^j} \le C\left[1 + \frac{1}{N}\sum_{j = 1}^N \left(\nnorm{X_0^j} + \sup_{s \le t} \nnorm{Y_s^{j}}\right)\right] \quad \text{a.s.}
	\end{equation}
	for some possibly larger constant $C > 0$ that depends on $T$, $b$, and $\mathfrak{K}$ only. Now, going back to \eqref{eq: FIRST EQUATION IN THE PROOF THAT A UNIQUE EQUATION EXISTS TO THE SYSTEM OF MOLLIFIED AND SMOOTHED DIFFUSIONS WITHOUT RESETS} and substituting \eqref{eq: SECOND EQUATION IN THE PROOF THAT A UNIQUE EQUATION EXISTS TO THE SYSTEM OF MOLLIFIED AND SMOOTHED DIFFUSIONS WITHOUT RESETS} into it, we obtain
	\begin{equation*}
		\nnnorm{Z_t^i} \le \nnorm{X_0^{i}} + \sup_{s\le t}\nnnorm{Y_s^i} + C\left[1 + \frac{1}{N}\sum_{j = 1}^N \left(\nnorm{X_0^j} + \sup_{s \le t} \nnorm{Y_s^{j}}\right)\right] +  c_1\int_0^t \sup_{u \le s} \nnorm{Z_u^i}\diff{}s \quad \text{a.s.}
	\end{equation*}
	for some possibly larger constant $C > 0$ that still depends only on $T$, $b$, and $\mathfrak{K}$. By using Gr\"onwall's lemma again, there is a constant $C_{T,b,\mathfrak{K}} > 0$ such that for any $t \le T$,
	\begin{equation*}
		\sup_{s \le t}\nnorm{Z_s^i} \le C_{T,b,\mathfrak{K}}\left[1 + \nnnorm{X_0^i} + \sup_{s \le t} \nnnorm{Y_s^i} +  \frac{1}{N}\sum_{j = 1}^N \left(\nnnorm{X_0^j} + \sup_{s \le t} \nnnorm{Y_s^j}\right)\right]\quad \text{a.s.}
	\end{equation*}
	Jensen's inequality and the equation above gives, for any $p \ge 1$,
	\begin{equation}\label{eq: UPPERBOUND ON THE SUP OF ZS FOR ANY I}
		\sup_{s \le t}\nnorm{Z_s^i}^p \le C_{T,b,\mathfrak{K},p}\left[1 + \nnnorm{X_0^i}^p + \sup_{s \le t} \nnnorm{Y_s^i}^p +  \frac{1}{N}\sum_{j = 1}^N \left(\nnnorm{X_0^j}^p + \sup_{s \le t} \nnnorm{Y_s^j}^p\right)\right]\quad \text{a.s.}
	\end{equation}
	By Burkholder-Davis-Gundy, for any $j$ 
	\begin{equation*}
		\E\left[\sup_{0\le s \le t} \nnnorm{\int_0^s \sigma(u,X_u^j)\ind_{\{X_u^j < 0\}}\diff{}B_u^j}^p\right] \le \E\left[\left(\int_0^T \sigma^2(s,X_s^j)\diff{}s\right)^{p/2}\right] \le C_{\sigma,p} T^{p/2}.
	\end{equation*}
	Hence,
	\begin{equation}\label{eq: LAST EQUATION IN THE PROOF THAT THE SUP OF Z HAS FINITE P MOMENTS}
		\E\left[\sup_{0\le s \le t}\nnnorm{Z_s^i}^p\right] \le
		C_{T,b,\mathfrak{K},p}\left[1 + \E\nnnorm{X_0^i}^p + 2C_{\sigma,p} T^{p/2} +  \frac{1}{N}\sum_{j = 1}^N \E\nnnorm{X_0^j}^p\right].
	\end{equation}
	As $X_0$ has finite $p^{\text{th}}$-moments for every $p > 0$ by \cref{ass: ASSUMPTIONS FROM SOJMARK SPDE PAPER APPLIED TO THE NEUROSCIENCE SETTING} \eqref{ass: ASSUMPTIONS FROM SOJMARK SPDE PAPER APPLIED TO THE NEUROSCIENCE SETTING FOUR}, the R.H.S. of \eqref{eq: LAST EQUATION IN THE PROOF THAT THE SUP OF Z HAS FINITE P MOMENTS} is bounded by a constant that depends on $p,\,b,\,\mathfrak{K},\,\sigma$, and $T$ but is independent of $N$.
\end{proof}

From the above, it is clear that the following corollary holds.
\begin{corollary}[Uniform moment bounds]\label{prop: BOUNDS ON THE SUP OF THE P TH MOMENTS OF THE SUP NORM OF X}
	For any $p \ge 1$, we have 
	\begin{equation*}
		\E\left[\sup_{0\le t \le T}\nnorm{X_t^i}^p + \left(J_T^i\right)^p + \left(F_T^i\right)^p + \left(F_T^N\right)^p\right] \le C_{T,\,p},
	\end{equation*}
	where $C_{T,\,p} > 0$ is independent of $i$ and $N$.
\end{corollary}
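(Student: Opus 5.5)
The plan is to reduce each of the four quantities in the statement to moments of $\sup_{s\le T}\nnorm{Z_s^j}$. We then invoke \cref{prop: BOUNDS ON THE SUP OF THE P TH MOMENTS OF THE SUP NORM OF Z}, whose constant is uniform in $N$ and $j$. Since $(a_1+\dots+a_4)^p$ is bounded by a constant times the sum of the $a_k^p$, it suffices to bound each of the four expectations separately.

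For the membrane potential, the first display in the proof of \cref{prop: BOUNDS ON THE SUP OF THE P TH MOMENTS OF THE SUP NORM OF Z} gives $\nnorm{X_s^i}\le R+2\sup_{u\le s}\nnorm{Z_u^i}$. This follows from the definition of $Z^i$ and \eqref{eq: BOUNDS ON THE DELAYED AND RANDOMISED JUMPS BY THE Z PARTICLE}. Taking the supremum over $s\le T$, raising to the power $p$, and using $(a+b)^p\le 2^{p-1}(a^p+b^p)$, we get
\[
\E\Bigl[\sup_{t\le T}\nnorm{X_t^i}^p\Bigr]\le 2^{p-1}R^p+2^{2p-1}\,\E\Bigl[\sup_{s\le T}\nnorm{Z_s^i}^p\Bigr]\le C.
\]

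For the spike counts, \eqref{eq: BOUNDS ON THE JUMPS OF PARTICLE I AND THE FIRING FUNCTION BY THE Z PARTICLE} gives $J_T^i\le \gamma^{-1}\sup_{s\le T}\nnorm{Z_s^i}+1$, and the same convexity bound yields $\E[(J_T^i)^p]\le C$. The term written $F_T^i$ in the statement I read as a typo for $J_T^i$, or possibly for the reset count $J_T^{D,i}\le J_T^i$; either way it is covered by the same bound. For $F_T^N=\frac1N\sum_j J_T^j$, Jensen's inequality (convexity of $x\mapsto x^p$ on $\R_+$) gives $(F_T^N)^p\le \frac1N\sum_j (J_T^j)^p$. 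Taking expectations and applying the bound on $\E[(J_T^j)^p]$ for each $j$, which is uniform in $j$, then gives a bound independent of $N$.

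None of these steps is a real obstacle. The only points to check are that the constant from \cref{prop: BOUNDS ON THE SUP OF THE P TH MOMENTS OF THE SUP NORM OF Z} really is uniform in both $i$ and $N$, and that Jensen is used in place of any independence across particles, since the particles are correlated through $W^0$ and the mean-field terms.
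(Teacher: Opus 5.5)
Your proposal is correct and follows essentially the paper's route. The paper obtains this corollary directly from the pathwise bounds \eqref{eq: BOUNDS ON THE DELAYED AND RANDOMISED JUMPS BY THE Z PARTICLE}--\eqref{eq: BOUNDS ON THE JUMPS OF PARTICLE I AND THE FIRING FUNCTION BY THE Z PARTICLE} combined with the $N$- and $i$-uniform moment bound on $\sup_{s\le T}\nnorm{Z_s^i}$ from \cref{prop: BOUNDS ON THE SUP OF THE P TH MOMENTS OF THE SUP NORM OF Z}, exactly as you do. Your use of $\sup_{s\le T}\nnorm{Z_s^i}$ rather than $\sup_{s\le T}Z_s^i$ in the bound on $J_T^i$ (which also covers the case $J_T^i=0$) and your reading of $F_T^i$ as a typo are both sensible.
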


Our aim is to show that $X_t^{i},\,J_t^{i},$ and $F_t^N$ are all sub-Gaussian uniformly in $N$. To this end, it will be sufficient to show that the processes
\begin{align}
	&\Lambda_t^i \coloneqq \nnnorm{Z_t^i} + \frac{1}{N}\sum_{j = 1}^N \nnnorm{Z_t^j}, &\Gamma_t^i \coloneqq \nnnorm{Z_t^{i}}^2 + \frac{1}{N}\sum_{j = 1}^N \nnnorm{Z_t^j}^2,\label{eq: DEFINITION OF LAMBDA AND GAMMA}\\
	&\hat{\Lambda}_t^i \coloneqq \sup_{0 \le s \le t} \nnnorm{Z_s^i} + \frac{1}{N}\sum_{j = 1}^N \sup_{0 \le s \le t} \nnnorm{Z_s^j},  &\hat{\Gamma}_t^{i} \coloneqq \sup_{0 \le s \le t}\nnnorm{Z_s^i}^2 + \frac{1}{N}\sum_{j = 1}^N \sup_{0 \le s \le t}\nnnorm{Z_s^j}^2.\label{eq: DEFINITION OF LAMBDA HAT AND GAMMA HAT}
\end{align}
all have finite exponential moments. We introduce this notation here and consider these random variables, as they play a role in Girsanov-type arguments presented later. These show that we have a probability space where we can view our particle as being driven by only a Brownian motion. This notation is exactly that presented in \cite[Section~6.1]{hambly2019spde}.

\begin{proposition}\label{prop: UNIFORM BOUNDS ON THE EXPONENTIAL OF GAMMA HAT T}
	For $\delta < \min\{\gamma/4, 1/8C_\sigma^2 T\}$, where $\gamma$ is the constant in \cref{ass: ASSUMPTIONS FROM SOJMARK SPDE PAPER APPLIED TO THE NEUROSCIENCE SETTING} \eqref{ass: ASSUMPTIONS FROM SOJMARK SPDE PAPER APPLIED TO THE NEUROSCIENCE SETTING FOUR}, and any $i = 1,\ldots, N$, there exists a constant $C > 0$ such that
	\begin{equation*}
		\E\left[e^{\delta \hat{\Gamma}_T^i}\right] \le C.
	\end{equation*}
	The constant $C$ depends on $\delta,\,T,\,\mathfrak{K}$, the initial distribution $X_0$, and the bounds on $b$ and $\sigma$, but is independent of $i$ and $N$.
\end{proposition}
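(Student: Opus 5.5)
Starting from the pathwise bound obtained in the proof of \cref{prop: BOUNDS ON THE SUP OF THE P TH MOMENTS OF THE SUP NORM OF Z}, namely
\[
\sup_{s\le T}\nnorm{Z_s^i}\le C_{T,b,\mathfrak{K}}\Big[1+\nnorm{X_0^i}+\sup_{s\le T}\nnorm{Y_s^i}+\frac1N\sum_{j=1}^N\big(\nnorm{X_0^j}+\sup_{s\le T}\nnorm{Y_s^j}\big)\Big],
\]
I would square it to control $\hat\Gamma_T^i$. The difficulty is that this only gives $\hat\Gamma_T^i\le C'(1+A_i+\frac1N\sum_j A_j)$ with $A_j:=\nnorm{X_0^j}^2+\sup_{s\le T}\nnorm{Y_s^j}^2$, and a constant $C'>1$ that would destroy the explicit threshold on $\delta$. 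I would therefore redo the Grönwall step more carefully. Squaring the drift term and applying Cauchy--Schwarz gives
\[
\big(\nnorm{X_0^i}+c_1\int_0^t(\cdots)\diff s+\sup\nnorm{Y^i}\big)^2\le(1+\epsilon)\big(\nnorm{X_0^i}+\sup\nnorm{Y^i}\big)^2+C_\epsilon\, t\int_0^t(\cdots)^2\diff s .
\]
Running Grönwall in this squared form, first for the empirical average and then for particle $i$, should yield
\[
\hat\Gamma_T^i\le C+2(1+\epsilon)'\Big(A_i+\frac1N\sum_jA_j\Big)+\text{(Grönwall factor)}\times\cdots .
\]
This still produces a multiplicative constant. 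I therefore expect the main obstacle to be exactly this: obtaining a threshold for $\delta$ that does not depend on $b$ or $\mathfrak{K}$. My fallback is to accept a constant $C_0$ in front, $\hat\Gamma_T^i\le C_0(1+A_i+\frac1N\sum_jA_j)$, and prove exponential integrability for all sufficiently small $\delta$. The stated threshold $\min\{\gamma/4,1/(8C_\sigma^2T)\}$ is then what remains after absorbing $C_0$, in the sense of \cite[Section~6.1]{hambly2019spde}, where the same notation and result appear. I would follow that argument, treating the Grönwall constant via the $(1+\epsilon)$ splitting and choosing $T$-dependent weights.

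Given such a pathwise bound, the exponential moment factorises. First, by convexity of $x\mapsto e^{x}$ (Jensen over the average),
\[
\exp\Big(\delta C_0\frac1N\sum_j A_j\Big)\le\frac1N\sum_j e^{\delta C_0A_j}.
\]
Next, Hölder's inequality with exponents $2,2$ separates the $A_i$ term from the average term, so it suffices to bound $\E[e^{2\delta C_0A_j}]$ uniformly in $j$ and $N$. Using $e^{a+b}\le\frac12(e^{2a}+e^{2b})$, this reduces to two separate tasks: bounding $\E e^{4\delta C_0\nnorm{X_0}^2}$, and bounding $\E e^{4\delta C_0\sup_{s\le T}\nnorm{Y^j_s}^2}$.

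For the initial condition, \cref{ass: ASSUMPTIONS FROM SOJMARK SPDE PAPER APPLIED TO THE NEUROSCIENCE SETTING}\eqref{ass: ASSUMPTIONS FROM SOJMARK SPDE PAPER APPLIED TO THE NEUROSCIENCE SETTING FOUR} gives $\prob(\nnorm{X_0}>\lambda)\le Ce^{-\gamma\lambda^2}$. Hence $\E e^{\kappa\nnorm{X_0}^2}<\infty$ for $\kappa<\gamma$, which explains the factor $\gamma/4$.

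For the martingale part, $Y^j$ is a continuous martingale with $\langle Y^j\rangle_T\le C_\sigma^2T$. By Dambis--Dubins--Schwarz, $Y^j=\beta_{\langle Y^j\rangle}$ for a Brownian motion $\beta$, so $\sup_{s\le T}\nnorm{Y_s^j}\le\sup_{u\le C_\sigma^2T}\nnorm{\beta_u}$. The reflection principle then gives
\[
\prob\Big(\sup_{u\le C_\sigma^2T}\nnorm{\beta_u}>\lambda\Big)\le4e^{-\lambda^2/(2C_\sigma^2T)},
\]
so $\E e^{\kappa\sup\nnorm{Y}^2}<\infty$ for $\kappa<1/(2C_\sigma^2T)$. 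This accounts for the factor $1/(8C_\sigma^2T)$ after the two applications of Cauchy--Schwarz. All resulting constants are independent of $i$ and $N$, since the laws involved are identical across particles and the DDS bound is uniform.
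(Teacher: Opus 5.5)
Your plan is the paper's plan. You square the pathwise Gr\"onwall bound \eqref{eq: UPPERBOUND ON THE SUP OF ZS FOR ANY I}, split the exponential into an initial-condition factor and a martingale factor, and control these with the sub-Gaussian tail of $\nu_0$ and with Dambis--Dubins--Schwarz plus the reflection principle. The obstacle you flag is genuine, and the paper's proof does not overcome it either. It writes $\E[e^{\delta\hat\Gamma_T^i}]\le e^{C_{T,b,\mathfrak{K}}}\,\E[e^{4\delta\nnorm{X_0^i}^2}]^{1/4}\cdots$, treating $C_{T,b,\mathfrak{K}}$ as an additive constant. In the pathwise bound, however, it multiplies $\nnorm{X_0^j}^2$ and $\sup_{s\le T}\nnorm{Y_s^j}^2$. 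Done correctly, every exponent $4\delta$ becomes $4\delta C_0$ with $C_0=C_0(T,b,\mathfrak{K})$, where $C_0$ also absorbs the average term in $\hat\Gamma_T^i$.

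No refinement of Gr\"onwall, including your $(1+\epsilon)$ splitting, can remove $C_0$, because the explicit threshold is false in general. Take $b(t,x,\mu,f)=C_bx$, $\sigma\equiv1$ (so $C_\sigma=1$), $\rho\equiv0$, and $V_0(x)=c\,e^{-\gamma x^2}$ on $(-\infty,0)$, with $\gamma<1/(2T)$ and $e^{2C_bT}>4$. Then \cref{ass: ASSUMPTIONS FROM SOJMARK SPDE PAPER APPLIED TO THE NEUROSCIENCE SETTING} holds and the stated threshold equals $\gamma/4$. Before the first spike, $X_t^i=e^{C_bt}X_0^i+G_t$ with $G_t=\int_0^te^{C_b(t-s)}\diff W_s^i$ independent of $X_0^i$. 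Consider the event $\{\sup_{t\le T}G_t<\nnorm{X_0^i},\ \nnorm{G_T}\le1\}$; its conditional probability given $X_0^i$ is bounded below once $\nnorm{X_0^i}$ is large. On this event there is no spike and $\nnorm{Z_T^i}\ge e^{C_bT}\nnorm{X_0^i}-1$. Hence $\E[e^{\delta\hat\Gamma_T^i}]\ge\E[e^{\delta\nnorm{Z_T^i}^2}]=\infty$ for every $\delta\in(\gamma e^{-2C_bT},\gamma/4)$. The coefficient of $\nnorm{X_0^i}^2$ really is of order $e^{2C_bT}$.

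Consequently your fallback is the correct statement, and your argument for it is complete. Start from $\hat\Gamma_T^i\le C_0(1+A_i+\frac1N\sum_jA_j)$. Convexity of the exponential over the average, Cauchy--Schwarz, and $e^{a+b}\le\frac12(e^{2a}+e^{2b})$ (which needs no independence between $X_0^j$ and $Y^j$) then give a bound uniform in $i$ and $N$ for all $\delta<\min\{\gamma/4,1/(8C_\sigma^2T)\}/C_0$.

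What you must delete is the sentence claiming that the stated threshold ``is what remains after absorbing $C_0$''. It is not, and the example above shows it cannot be: the threshold has to depend on $b$, $\mathfrak{K}$ and $T$ as well. Nothing downstream is affected. \cref{cor: SUBGUASSIANTY COROLLARY} only needs some $\delta>0$. In \cref{prop: BOUNDS ON THE 1 - P NORM OF THE RADON NIKODYM ESTIMATE}, the definition of $p_0$ works verbatim with the corrected threshold, since $C_p\to0$ as $p\downarrow1$.
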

\begin{proof}
	By \cref{prop: BOUNDS ON THE SUP OF THE P TH MOMENTS OF THE SUP NORM OF Z}, for any $i = 1,\ldots, N$ fixed, we have by \eqref{eq: UPPERBOUND ON THE SUP OF ZS FOR ANY I},
	\begin{equation*}
		\sup_{s \le T}\nnorm{Z_s^i}^2 \le C_{T,b,\mathfrak{K}}\left[1 + \nnnorm{X_0^i}^2 + \sup_{s \le T} \nnnorm{Y_s^i}^2 +  \frac{1}{N}\sum_{j = 1}^N \left(\nnnorm{X_0^j}^2 + \sup_{s \le T} \nnnorm{Y_s^j}^2\right)\right]\quad \text{a.s.},
	\end{equation*}
	where $C_{T,b,\mathfrak{K}}$ is a constant that depends on $T,\,b$, and $\mathfrak{K}$ only, and $Y_t^i = \int_0^t \sigma(s,X_s^i)\ind_{\{X_s^i < 0\}}\diff{}B_s^i$ with $\diff{}B_s^i = \sqrt{1 - \rho^2_s}\diff{} W_s^i+\rho_s\diff{}W_s^0$ and $\rho_s = \rho(s,\nu^N_s,\mathfrak{f}_s^N)$. By the Generalised H\"older inequality, we have
	\begin{align*}
		&\E\left[e^{\delta \hat{\Gamma}_t^i}\right] \\
		&\le e^{C_{T,b,\mathfrak{K}}} \E \left[e^{4\delta \nnnorm{X_0^i}^2}\right]^{\frac{1}{4}} \E \left[e^{4\delta \sup_{s \le T} \nnnorm{Y_s^i}^2}\right]^{\frac{1}{4}} \E \left[e^{ \frac{4\delta}{N}\sum_{j = 1}^N \nnnorm{X_0^j}^2}\right]^{\frac{1}{4}} \E \left[e^{ \frac{4\delta}{N}\sum_{j = 1}^N \sup_{s \le T} \nnnorm{Y_s^j}^2}\right]^{\frac{1}{4}}.
	\end{align*}
	The goal is now to upper bound each term on the R.H.S. above by a constant that is independent of $N$ and $i$. As $\delta < \gamma/4$ by assumption, then by \cref{ass: ASSUMPTIONS FROM SOJMARK SPDE PAPER APPLIED TO THE NEUROSCIENCE SETTING} \eqref{ass: ASSUMPTIONS FROM SOJMARK SPDE PAPER APPLIED TO THE NEUROSCIENCE SETTING FOUR}
	\begin{equation*}
		\E \left[e^{4\delta \nnnorm{X_0^j}^2}\right] \le C_{2,4\delta,X_0} \qquad \forall \, j\in \{1,\ldots,N\}.
	\end{equation*}
	Therefore, by Generalised H\"older inequality and the above, we have
	\begin{equation*}
		\E \left[e^{ \frac{4\delta}{N}\sum_{j = 1}^N \nnnorm{X_0^j}^2}\right] \le \prod_{j = 1}^N \E \left[e^{ {4\delta}\nnnorm{X_0^j}^2}\right]^{\frac{1}{N}} \le C_{2,4\delta,X_0}.
	\end{equation*}
	For any $j$, as $Y_t^j$ is a continuous local martingale, by the Dubins-Schwarz Theorem there is a Brownian motion $\hat{B}$ such that $Y_t^j = \hat{B}_{\int_0^t \sigma^2(s,X_s^j)\ind_{\{X_s^j < 0\}} \diff s}$. Therefore,
	\begin{align*}
		\E \left[e^{4\delta \sup_{t \le T} \nnnorm{Y_t^j}^2}\right] 
		&= \E \left[e^{4\delta \sup_{t \le T} \nnnorm{\hat{B}_{\int_0^t \sigma^2(s,X_s^j)\ind_{\{X_s^j < 0\}} \diff s}}^2}\right] \\ 
		&\le \E \left[e^{4\delta \sup_{t \le T} \nnnorm{\hat{B}_{C_\sigma^2t}}^2}\right] \le C_{p,4\delta,C_\sigma^2T},
	\end{align*}
	where the penultimate line follows from the bounds on $\sigma$, \cref{ass: ASSUMPTIONS FROM SOJMARK SPDE PAPER APPLIED TO THE NEUROSCIENCE SETTING} \eqref{ass: ASSUMPTIONS FROM SOJMARK SPDE PAPER APPLIED TO THE NEUROSCIENCE SETTING ONE}, and the last line follows from 
	well known results on the exponential moments of Brownian motion.
	By the Generalised H\"older inequality and the above, we have
	\begin{equation*}
		\E\left[e^{ \frac{4\delta}{N}\sum_{j = 1}^N \sup_{s \le T} \nnnorm{Y_s^j}^2}\right] \le \prod_{j = 1}^N \E \left[e^{ {4\delta}\sup_{s \le T} \nnnorm{Y_s^j}^2}\right]^{\frac{1}{N}} \le C_{p,4\delta,C_\sigma^2T}.
	\end{equation*}
	Therefore, we have shown that
	\begin{equation*}
		\E\left[e^{\delta \hat{\Gamma}_t^i}\right] \le e^{C_{T,b,\mathfrak{K}}} \cdot (C_{2,4\delta,X_0})^{1/2}\cdot (C_{p,4\delta,C_\sigma^2T})^{1/2}.
	\end{equation*}
\end{proof}

\begin{corollary}\label{cor: SUBGUASSIANTY COROLLARY}
	$X_t^{i},\,Z_t^{i},\,J_t^{i},\,F_t^N$, and $\hat{\Lambda}_t^{i}$ are all sub-Gaussian uniformly in $N \ge 1$ and $t \in [0,T]$.
\end{corollary}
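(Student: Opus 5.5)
The plan is to reduce everything to \cref{prop: UNIFORM BOUNDS ON THE EXPONENTIAL OF GAMMA HAT T}. That proposition gives a $\delta>0$ and a constant $C$, independent of $i$ and $N$, with $\E[e^{\delta \hat{\Gamma}_T^i}]\le C$. Recall that a real random variable $V$ is sub-Gaussian with uniform constants as soon as $\E[e^{\delta' V^2}]\le C'$ for some $\delta',C'>0$ not depending on the parameters ($N$, $t$, $i$). Equivalently, we need $\prob(|V|\ge\lambda)\le C'e^{-\delta'\lambda^2}$, which follows by Markov's inequality. Hence it suffices to dominate $V^2$, for each of the listed quantities, by an affine function of $\hat{\Gamma}_T^i$.

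\textbf{$Z$ and $\hat\Lambda$.} For $Z_t^i$ this is immediate: for $t\le T$ we have $|Z_t^i|^2\le\sup_{s\le T}|Z_s^i|^2\le\hat{\Gamma}_T^i$. For $\hat{\Lambda}_t^i$, use $(a+b)^2\le 2a^2+2b^2$ and Jensen on the empirical mean:
\[
(\hat{\Lambda}_t^i)^2\le 2\sup_{s\le T}|Z_s^i|^2+\frac{2}{N}\sum_{j=1}^N\sup_{s\le T}|Z_s^j|^2\le 2\hat{\Gamma}_T^i .
\]
Therefore $\E[e^{(\delta/2)(\hat{\Lambda}_t^i)^2}]\le C$.

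\textbf{$X$.} As in the proof of \cref{prop: BOUNDS ON THE SUP OF THE P TH MOMENTS OF THE SUP NORM OF Z}, \eqref{eq: BOUNDS ON THE DELAYED AND RANDOMISED JUMPS BY THE Z PARTICLE} gives $|X_t^i|\le R+2\sup_{s\le t}|Z_s^i|$. Hence $|X_t^i|^2\le 2R^2+8\hat{\Gamma}_T^i$, and so
\[
\E[e^{(\delta/8)|X_t^i|^2}]\le e^{\delta R^2/4}C .
\]

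\textbf{$J$ and $F^N$.} By \eqref{eq: BOUNDS ON THE JUMPS OF PARTICLE I AND THE FIRING FUNCTION BY THE Z PARTICLE} we have $0\le J_t^i\le\gamma^{-1}\sup_{s\le T}|Z_s^i|+1$, which gives $(J_t^i)^2\le 2\gamma^{-2}\hat{\Gamma}_T^i+2$. Similarly $0\le F_t^N\le(N\gamma)^{-1}\sum_j\sup_{s\le T}|Z_s^j|+1$. By Jensen, $(\frac1N\sum_j a_j)^2\le\frac1N\sum_j a_j^2$, so $(F_t^N)^2\le 2\gamma^{-2}\hat{\Gamma}_T^i+2$. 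In both cases this yields an exponential square moment with $\delta'=\delta\gamma^2/2$ and constant $e^{\delta\gamma^2}C$.

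All constants come only from $\delta$, $R$, $\gamma$ and the constant of \cref{prop: UNIFORM BOUNDS ON THE EXPONENTIAL OF GAMMA HAT T}, so they are uniform in $N\ge1$, $i$ and $t\in[0,T]$. Monotonicity in $t$ has been used throughout: every bound was taken against the supremum up to $T$, which dominates the corresponding quantity at any $t\le T$.

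There is no real obstacle. The only point needing care is that $F^N$ averages over all particles rather than particle $i$ alone, which is exactly why the averaged term in $\hat{\Gamma}^i$ and the Jensen step are needed.
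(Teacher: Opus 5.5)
Your proposal is correct and takes the same approach as the paper, whose proof simply says the claim follows directly from \cref{prop: UNIFORM BOUNDS ON THE EXPONENTIAL OF GAMMA HAT T}. You fill in the routine details the paper leaves implicit: you dominate the square of each quantity by an affine function of $\hat{\Gamma}_T^i$ using \eqref{eq: BOUNDS ON THE DELAYED AND RANDOMISED JUMPS BY THE Z PARTICLE}, \eqref{eq: BOUNDS ON THE JUMPS OF PARTICLE I AND THE FIRING FUNCTION BY THE Z PARTICLE} and Jensen's inequality, and these steps and the resulting constants check out.
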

\begin{proof}
	The claim follows directly from \cref{prop: UNIFORM BOUNDS ON THE EXPONENTIAL OF GAMMA HAT T}.
\end{proof}

Similarly, as in \cite[Section~6.2]{hambly2019spde}, we may transform each particle into a Brownian motion with drift, which pauses for a random amount of time when the particle hits $0$. Then, the particle is reset to a random value below $0$. Additionally, this transformation is such that the times the transformed particle hits $0$ are equal to the times the original particle hits $0$. This transformation is crucial for the Girsanov-type arguments that follow.

\begin{proposition}\label{prop: SCALE TRANSFORM LEMMA }
	Define the transformation $\Upsilon \in \mathcal{C}^{1,\,2}\left([0,\,T]\times\R\right)$ by
	\begin{equation*}
		(t,\,x) \mapsto \Upsilon_t(x) \coloneqq \int_0^x\frac{\diff{y}}{\sigma(t,y)}.
	\end{equation*}
	Fixing an arbitrary index $i \in \{1,\,\ldots,\,N\}$, then 
	\begin{equation*}
		\diff\Upsilon_t\left(X_t^i\right) = \hat{b}_t^i \ind_{\{\Upsilon_t\left(X_t^i\right) < 0\}}\diff{t} + \ind_{\{\Upsilon_t\left(X_t^i\right) < 0\}} \diff{B_t^i} + \diff\sum_{k \ge 1} \Upsilon_{\tau_k^i + \varsigma_k^i}(-\xi_k^i)\ind_{[0,t]}(\tau_k^i + \varsigma_k^i),
	\end{equation*}
	where $B^i$ is a Brownian motion and the (stochastic) drift $\hat{b}_t^{i}$ satisfies the growth condition
	\begin{equation}\label{eq: GROWTH CONDITION ON THE EQUATION OF B HAT IN CHANGE OF MEASURE}
		\nnorm{\hat{b}_t^{i}} \le C_{b,\sigma,\mathfrak{K}} \left(1 + \nnnorm{X_t^i} + \langle\nu_t^N,\nnnorm{\cdot}\rangle + \nnnorm{{F}_t^{N}}\right).
	\end{equation}
	Furthermore, the transformed process $\Upsilon_t\left(X_t^i\right)$ satisfies
	\begin{equation}\label{eq: BOUND ON MATHFRAK Z WHEN CHANGING THE MEASURE}
		\operatorname{sgn}\left(\Upsilon_t\left(X_t^i\right)\right) = \operatorname{sgn}\left(X_t^i\right) \quad \text{and} \quad \nnnorm{\Upsilon_t\left(X_t^i\right)} \le C\nnnorm{X_t^i}.
	\end{equation}
\end{proposition}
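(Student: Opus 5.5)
We apply It\^o's formula to $\Upsilon_t(X_t^i)$ between consecutive jump times. Since $\sigma \in \mathcal{C}^{1,2}$ and $C_\sigma^{-1}\le\sigma\le C_\sigma$, the map $\Upsilon$ is $\mathcal{C}^{1,2}$, with
\begin{equation*}
\partial_x\Upsilon_t(x)=\frac{1}{\sigma(t,x)},\qquad \partial_{xx}\Upsilon_t(x)=-\frac{\partial_x\sigma(t,x)}{\sigma^2(t,x)},\qquad \partial_t\Upsilon_t(x)=-\int_0^x\frac{\partial_t\sigma(t,y)}{\sigma^2(t,y)}\diff y .
\end{equation*}
On each interval where $X^i$ has no jump, the continuous part of $\diff X^i_t$ is $b\,\ind_{\{X_t^i<0\}}\diff t+\sigma\,\ind_{\{X^i_t<0\}}\diff B^i_t$. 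Here $\diff B^i=\rho\diff W^0+\sqrt{1-\rho^2}\diff W^i$ is a continuous local martingale with quadratic variation $t$, so L\'evy's characterisation makes $B^i$ a Brownian motion.

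It\^o's formula then gives
\begin{equation*}
\diff\Upsilon_t(X^i_t)=\partial_t\Upsilon_t(X_t^i)\diff t+\ind_{\{X^i_t<0\}}\Big(\tfrac{b}{\sigma}-\tfrac12\partial_x\sigma\Big)\diff t+\ind_{\{X^i_t<0\}}\diff B^i_t .
\end{equation*}
Two points must be handled here.

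The first is the $\partial_t\Upsilon$ term, which lacks the indicator. When $X^i_t\ge0$ the neuron is refractory and sits at $X^i_t=0$, so $\Upsilon_t(0)=0$ and $\partial_t\Upsilon_t(0)=0$. Hence $\partial_t\Upsilon_t(X^i_t)=\partial_t\Upsilon_t(X^i_t)\ind_{\{X^i_t<0\}}$, possibly after modifying a Lebesgue-null set of times at the hitting instants. I would check carefully that during refractory periods $X^i$ is indeed frozen at $0$; I expect this bookkeeping to be the main subtle point.

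The second is the jumps. At time $\tau^i_k+\varsigma^i_k$ the process moves from $0$ to $-\xi^i_k$, so $\Upsilon$ jumps from $\Upsilon(0)=0$ to $\Upsilon_{\tau_k^i+\varsigma_k^i}(-\xi^i_k)$. This gives exactly the stated jump sum.

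We therefore set
\begin{equation*}
\hat b^i_t=\partial_t\Upsilon_t(X^i_t)+\frac{b(t,X_t^i,\nu^N_t,\mathfrak f^N_t)}{\sigma(t,X_t^i)}-\tfrac12\partial_x\sigma(t,X^i_t).
\end{equation*}
The sign statement follows because $\partial_x\Upsilon=1/\sigma>0$ and $\Upsilon_t(0)=0$, so $\Upsilon_t$ is strictly increasing through the origin. The bounds on $\sigma$ give $|\Upsilon_t(x)|\le C_\sigma|x|$, which is \eqref{eq: BOUND ON MATHFRAK Z WHEN CHANGING THE MEASURE}. These facts also let the indicators $\ind_{\{X^i_t<0\}}$ be rewritten as $\ind_{\{\Upsilon_t(X^i_t)<0\}}$.

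For the growth bound, note that $|\partial_t\Upsilon_t(x)|\le C_\sigma^3|x|$ and $|\partial_x\sigma|\le C_\sigma$. Also $|b/\sigma|\le C_\sigma C_b(1+|x|+\langle\nu^N_t,|\cdot|\rangle+|\mathfrak f^N_t|)$. For $\mathfrak f^N$, integrate by parts using $\mathfrak K\in\mathcal W^{1,1}_0$ with $\mathfrak K(0)=0$ and $F^N$ non-decreasing:
\begin{equation*}
|\mathfrak f^N_t|=\Big|\int_0^t\mathfrak K'(t-s)F^N_s\diff s\Big|\le\|\mathfrak K'\|_1F^N_t .
\end{equation*}
Combining these estimates yields \eqref{eq: GROWTH CONDITION ON THE EQUATION OF B HAT IN CHANGE OF MEASURE} with a constant depending on $b$, $\sigma$ and $\mathfrak K$.
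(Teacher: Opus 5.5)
Your proposal is correct and follows essentially the same route as the paper. Both arguments use It\^o's formula (you apply it piecewise between resets, the paper uses the semimartingale version with jumps), the same derivatives of $\Upsilon$ with $\partial_t\Upsilon_t(0)=0$ supplying the indicator on the time-derivative term, the jump from $\Upsilon_s(0)=0$ to $\Upsilon_{\tau_k^i+\varsigma_k^i}(-\xi_k^i)$, and the bound $|\mathfrak{f}_t^N|\le\|\mathfrak{K}'\|_1F_t^N$ for the growth condition. No null-set modification is needed, since $X_t^i\le 0$ always and $X_t^i=0$ exactly on the intervals $[\tau_k^i,\tau_k^i+\varsigma_k^i)$.
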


\begin{proof}
	As before, we define $\diff{B_t^i} \coloneqq \rho(t,\nu_t^N,\mathfrak{f}_t^N)\diff{W_t^0} + \sqrt{1 - \rho^2(t,\nu_t^N,\mathfrak{f}_t^N)}\diff{W_t^i}$. Therefore, by applying It\^o's formula with jumps \citep[Chapter~II,Theorem~33]{protter2005stochastic}, we have
	\begin{align}\label{eq: GENERALISED ITO FORMULA FOR NEUORN IN DELAYED JUMPS IN SCALE TRANSFORM PROPOSITION}
		\begin{split}
			\Upsilon_t(X_t^i) =& \Upsilon_0(X_0^i) + \int_0^t \partial_s \Upsilon_s(X_s^i)\diff s + \int_{0+}^t \partial_x \Upsilon_{s-}(X_{s-}^i)\diff{}X_s^i \\
			&+ \frac{1}{2}\int_{0+}^t\partial_x^2\Upsilon_{s-}(X_{s-}^i)\diff{}[X^i]^c_s \\
			&+ \sum_{\substack{0 < s \le t \\ s \text{ is jump time}}} \{\Upsilon_s(X_s^i) - \Upsilon_{s-}(X_{s-}^i) - \partial_x\Upsilon_{s}(X_{s-}^i)\Delta X_s^i\}.
		\end{split}
	\end{align}
	Since $X^i$ has \cadlag paths and will jump to $-\xi_k^i$ only at times $\tau_k^i + \varsigma_k^i$, and $X^i$, by construction, is $0$ at time $(\tau_k^i + \varsigma_k^i)-$, and also noting that $\Upsilon_s(0) = 0$ for every $s$, \eqref{eq: GENERALISED ITO FORMULA FOR NEUORN IN DELAYED JUMPS IN SCALE TRANSFORM PROPOSITION} expands to
	\begin{align*}
		\begin{split}
			\Upsilon_t(X_t^i) &= \Upsilon_0(X_0^i) + \int_0^t \partial_s \Upsilon_s(X_s^i)\diff s +  \int_{0}^t \partial_x \Upsilon_{s}(X_{s}^i)
			b(s,\,X_s^i,\nu_s^{N},\mathfrak{f}_s^N)\ind_{\{X_s^i < 0\}}\diff{}s\\
			&+ \int_{0}^t\partial_x\Upsilon_{s}(X_{s}^i)\sigma(s,\,X_s^i)\ind_{\{X_s^i < 0\}}\diff{}B_s^i -\int_{0+}^t\partial_x \Upsilon_{s-}(X_{s-}^i)\diff{}\left(\sum_{k \ge 1} \xi_k^i \ind_{[0,s]}(\tau_k^i + \varsigma_k^i)\right)\\
			&+ \frac{1}{2}\int_{0}^t\partial_x^2\Upsilon_{s}(X_{s}^i)\sigma^2(s,\,X_s^i)\ind_{\{X_s^i < 0\}}\diff{}s + \sum_{\substack{0 < s \le t \\ s \text{ is jump time}}} \{\Upsilon_s(X_s^i) - \partial_x\Upsilon_{s}(X_{s-}^i)\Delta X_s^i\}.
		\end{split}
	\end{align*}
	As $\sum_{k \ge 1} \xi_k^i \ind_{[0,s]}(\tau_k^i + \varsigma_k^i)$ is a pure jump process, we observe that
	\begin{equation*}
		-\int_{0+}^t\partial_x \Upsilon_{s-}(X_{s-}^i)\diff{}\left(\sum_{k \ge 1} \xi_k^i \ind_{[0,s]}(\tau_k^i + \varsigma_k^i)\right) = \sum_{\substack{0 < s \le t \\ s \text{ is jump time}}} \partial_x\Upsilon_{s}(X_{s-}^i)\Delta X_s^i.
	\end{equation*}
	Therefore,
	\begin{align}
		\Upsilon_t(X_t^i) &= \Upsilon_0(X_0^i) + \int_0^t \partial_s \Upsilon_s(X_s^i)\diff s +  \int_{0}^t \partial_x \Upsilon_{s}(X_{s}^i)
		b(s,\,X_s^i,\nu_s^{N},\mathfrak{f}_s^N)\ind_{\{X_s^i < 0\}}\diff{}s \notag \\
		&+ \int_{0}^t\partial_x\Upsilon_{s}(X_{s}^i)\sigma(s,\,X_s^i)\ind_{\{X_s^i < 0\}}\diff{}B_s^i + \frac{1}{2}\int_{0}^t\partial_x^2\Upsilon_{s}(X_{s}^i)\sigma^2(s,\,X_s^i)\ind_{\{X_s^i < 0\}}\diff{}s \notag \\
		& + \sum_{\substack{0 < s \le t \\ s \text{ is jump time}}} \Upsilon_s(X_s^i). \label{eq: GENERALISED ITO FORMULA FOR NEUORN IN DELAYED JUMPS IN SCALE TRANSFORM PROPOSITION EXPANDED}
	\end{align}
	A straightforward computation shows that
	\begin{equation*}
		\partial_t \Upsilon_t(x) = -\int_0^x\frac{\partial_t \sigma(t,\,y)}{\sigma(t,\,y)^2}\diff{y},
		\qquad
		\partial_x \Upsilon_t(x) = \frac{1}{\sigma(t,\,x)},
		\qquad
		\partial_x^2\Upsilon_t(x) = -\frac{\partial_x \sigma(t,\,x)}{\sigma(t,\,x)^2}.
	\end{equation*}
	We observe that $\partial_t \Upsilon_t(0) = 0$; therefore, it follows from \eqref{eq: GENERALISED ITO FORMULA FOR NEUORN IN DELAYED JUMPS IN SCALE TRANSFORM PROPOSITION EXPANDED} that
	\begin{equation*}
		\diff\Upsilon_t\left(X_t^i\right) = \hat{b}_t^i \ind_{\{X_t^i< 0\}}\diff{t} + \ind_{\{X_t^i < 0\}} \diff{B_t^i} + \diff\sum_{k \ge 1} \Upsilon_{\tau_k^i + \varsigma_k^i}(-\xi_k^i)\ind_{[0,t]}(\tau_k^i + \varsigma_k^i),
	\end{equation*}
	with
	\begin{equation*}
		\hat{b}_t^i \coloneqq \frac{b(t,X_t^i,\nu_t^N,\mathfrak{f}_t^N)}{\sigma(t,X_t^i)} - \frac{1}{2}\partial_x\sigma(t,X_t^i) - \int_0^{X_t^i} \frac{\partial_t\sigma(t,y)}{\sigma(t,y)^2}\diff{y}.
	\end{equation*}
	Now, the bound on $\Upsilon_t\left(X_t^i\right)$ and the statement about its sign in \eqref{eq: BOUND ON MATHFRAK Z WHEN CHANGING THE MEASURE} follow directly from the definition of $\Upsilon$, since $\sigma$ is strictly positive and bounded away from zero. Similarly, the growth condition in \eqref{eq: GROWTH CONDITION ON THE EQUATION OF B HAT IN CHANGE OF MEASURE} follows from the linear growth of $b$ along with $|\mathfrak{f}_t^N| \le \norm{\mathfrak{K}^\prime}_{L^1}F_t^N$.
\end{proof}

\subsection{Change of Measure}

Based on the scale transformation \cref{prop: SCALE TRANSFORM LEMMA }, we can exploit sub-Gaussianity to introduce a change of measure that removes the drift of a given particle. This will be convenient for estimates on the behaviour of the empirical measures in expectation.

\begin{proposition}\label{prop: NOVIKOV CONDITION FOR THE STOCHASTIC EXPONENTIAL}
	Fix $i \in \{1,\ldots,\,N\}$ and define the stochastic exponential to be
	\begin{equation*}
		\mathcal{E}_t \coloneqq \exp\left\{-\int_0^t \hat{b}_s^i \ind_{\{\Upsilon_s\left(X_s^i\right) < 0\}} \diff{}B_s^i - \frac{1}{2} \int_0^t \left(\hat{b}_s^i\right)^2 \ind_{\{\Upsilon_s\left(X_s^i\right) < 0\}} \diff{}s\right\},
	\end{equation*}
	where $\hat{b}_t^i \ind_{\{\Upsilon_t\left(X_t^i\right) < 0\}}$ is the drift of $\Upsilon_t\left(X_t^i\right)$ defined as in \cref{prop: SCALE TRANSFORM LEMMA }. Then $\int_0^t\hat{b}_s^i \ind_{\{\Upsilon_s\left(X_s^i\right) < 0\}}\diff{s} + {B_t^i}$ is a standard Brownian motion under the probability measure $\mathbbm{Q}$ given by the Radon-Nikodym derivative
	\begin{equation}
		\left.\frac{\diff{}\mathbbm{Q}}{\diff{}\prob}\right|_{\mathcal{F}_t} = \mathcal{E}_t.
	\end{equation}
\end{proposition}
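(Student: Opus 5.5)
The plan is to apply Girsanov's theorem. The only genuine work is to show that $\mathcal{E}$ is a true martingale on $[0,T]$, and not merely a positive local martingale (hence a supermartingale). Since $\hat{b}^i$ is only of linear growth in unbounded quantities, Novikov's condition will not be checked globally. Instead I will check it on short time intervals and chain them together, as in \cite[Section~6.2]{hambly2019spde}.

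\textbf{Bounding the drift.} By \eqref{eq: GROWTH CONDITION ON THE EQUATION OF B HAT IN CHANGE OF MEASURE}, the estimates $|X_s^i|\le R+2\sup_{u\le s}|Z_u^i|$ and $\langle \nu_s^N,|\cdot|\rangle\le R+\frac{2}{N}\sum_j\sup_{u\le s}|Z^j_u|$, and \eqref{eq: BOUNDS ON THE JUMPS OF PARTICLE I AND THE FIRING FUNCTION BY THE Z PARTICLE} for $F_s^N$, we get
\[
|\hat b_s^i|\le C(1+\hat\Lambda^i_s)\le C(1+\hat\Lambda_T^i).
\]
Squaring and using Jensen for the empirical average gives $(\hat\Lambda^i_T)^2\le 2\hat\Gamma^i_T$. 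Hence, for $s\le T$,
\[
(\hat b^i_s)^2\ind_{\{\Upsilon_s(X_s^i)<0\}}\le C_1\bigl(1+\hat\Gamma_T^i\bigr),
\]
with $C_1$ independent of $N$ and $i$.

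\textbf{Novikov on short intervals.} Fix $\delta$ as in \cref{prop: UNIFORM BOUNDS ON THE EXPONENTIAL OF GAMMA HAT T}. Partition $[0,T]$ into intervals $[t_{k},t_{k+1}]$ of length $h$, with $h\,C_1/2\le\delta$. On each interval,
\[
\E\Bigl[\exp\Bigl\{\tfrac12\int_{t_k}^{t_{k+1}}(\hat b_s^i)^2\ind\,\diff s\Bigr\}\Bigr]\le e^{C_1h/2}\,\E\bigl[e^{\delta\hat\Gamma^i_T}\bigr]<\infty
\]
by \cref{prop: UNIFORM BOUNDS ON THE EXPONENTIAL OF GAMMA HAT T}. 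So Novikov's criterion holds for the increment exponential $\mathcal{E}_{t_{k+1}}/\mathcal{E}_{t_k}$, which gives $\E[\mathcal{E}_{t_{k+1}}/\mathcal{E}_{t_k}\mid\mathcal{F}_{t_k}]=1$. I expect a small technicality here: Novikov should be applied conditionally. The cleanest route is the standard Karatzas--Shreve argument (Corollary 3.5.14), whose hypothesis is exactly this uniform short-interval Novikov bound. Iterating the tower property over the $T/h$ intervals gives $\E[\mathcal{E}_T]=1$. A nonnegative supermartingale with constant expectation is a martingale, so $\mathcal{E}$ is a true martingale on $[0,T]$ and $\mathbbm{Q}$ is a well-defined probability measure, consistent across $t$.

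\textbf{Applying Girsanov.} The stochastic logarithm of $\mathcal{E}$ is $L_t=-\int_0^t\hat b^i_s\ind\,\diff B_s^i$. Here $B^i$ is a continuous $\prob$-local martingale with $[B^i]_t=t$, since $\rho^2+(1-\rho^2)=1$ and $W^0,W^i$ are independent. By Girsanov,
\[
B^i_t-[B^i,L]_t=B^i_t+\int_0^t\hat b^i_s\ind_{\{\Upsilon_s(X_s^i)<0\}}\diff s
\]
is a continuous $\mathbbm{Q}$-local martingale. Its quadratic variation is still $t$, because quadratic variation is invariant under equivalent change of measure. Lévy's characterisation then shows that it is a standard $\mathbbm{Q}$-Brownian motion, which is the claim. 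The main obstacle is the true-martingale step, and it rests entirely on the exponential square-integrability from \cref{prop: UNIFORM BOUNDS ON THE EXPONENTIAL OF GAMMA HAT T}, supplied through the subdivision into short intervals.
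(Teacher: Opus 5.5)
Your proposal is correct and follows essentially the same route as the paper. The paper simply defers to \cite[Lemma~6.4]{hambly2019spde} with the bound $|\hat{b}_s^i\ind_{\{\Upsilon_s(X_s^i)<0\}}|\le C(1+\hat{\Lambda}_s^i)$, which amounts to Novikov's criterion on short subintervals, fed by the exponential moments of $\hat{\Gamma}_T^i$ from \cref{prop: UNIFORM BOUNDS ON THE EXPONENTIAL OF GAMMA HAT T}, followed by Girsanov and L\'evy's characterisation; your explicit steps $(\hat{\Lambda}_T^i)^2\le 2\hat{\Gamma}_T^i$ and $hC_1/2\le\delta$ spell out what the paper leaves to that reference.
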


\begin{proof}
	The proof follows exactly that of \citep[Lemma~6.4]{hambly2019spde} with the only change being $\nnnorm{\hat{b}_s^i \ind_{\{\Upsilon_s\left(X_s^i\right) < 0\}}} \le C(1 + \hat{\Lambda}_s^i)$.
\end{proof}

In addition to the change of measure, we will also need an estimate on the expected value of $\mathcal{E}_t^{1-p}$ for $p > 1$ close to 1, so that we may employ H\"older-type estimates to bound the probabilities of $X^i$ for any $i$.

\begin{proposition}[Radon-Nikodym Estimate]\label{prop: BOUNDS ON THE 1 - P NORM OF THE RADON NIKODYM ESTIMATE}
	Let $\mathcal{E}_t$ be the stochastic exponential defined as in \cref{prop: NOVIKOV CONDITION FOR THE STOCHASTIC EXPONENTIAL}. Then, for all $p > 1$ close enough to $1$,
	\begin{equation*}
		\E\left[\mathcal{E}_t^{1-p}\right] \le C,
	\end{equation*}
	where $C > 0$ is a constant that depends on $p,\,T,\,\mathfrak{K}$, the initial distribution $X_0$, and the bounds on $b$ and $\sigma$, but is independent of $i$ and $N$. Moreover, for any $q > 1$, we have
	\begin{equation*}
		\E\left[\nnorm{\hat{b}_t^{i}}^q \right] \le \tilde{C},
	\end{equation*}
	where $\tilde{C} > 0$ is a constant that depends on $q,\,T,\,\mathfrak{K}$, the initial distribution $X_0$, and the bounds on $b$ and $\sigma$, but is independent of $i$ and $N$.
\end{proposition}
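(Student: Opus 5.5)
We write $\beta_s \coloneqq \hat{b}_s^i \ind_{\{\Upsilon_s(X_s^i) < 0\}}$ and $M_t \coloneqq \int_0^t \beta_s \diff B_s^i$. The plan is to reduce both claims to the exponential moment bound of \cref{prop: UNIFORM BOUNDS ON THE EXPONENTIAL OF GAMMA HAT T}, using the linear growth bound \eqref{eq: GROWTH CONDITION ON THE EQUATION OF B HAT IN CHANGE OF MEASURE}.

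\textbf{Step 1: bound on $\beta$.} From \eqref{eq: GROWTH CONDITION ON THE EQUATION OF B HAT IN CHANGE OF MEASURE} we have $\nnorm{\hat b^i_s}\le C(1+\nnorm{X_s^i}+\langle \nu_s^N,\nnorm{\cdot}\rangle + F_s^N)$. The bounds $\nnorm{X_s^i}\le R+2\sup_{u\le s}\nnorm{Z_u^i}$ and \eqref{eq: BOUNDS ON THE JUMPS OF PARTICLE I AND THE FIRING FUNCTION BY THE Z PARTICLE} then give
\[
\nnorm{\beta_s}\le C(1+\hat\Lambda_s^i)\le C(1+\hat\Lambda_T^i).
\]
Since $(\hat\Lambda^i_T)^2\le 2\hat\Gamma_T^i$ by Cauchy--Schwarz on the empirical average, we get $\beta_s^2\le C(1+\hat\Gamma^i_T)$.

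\textbf{Step 2: the second claim.} This follows immediately from Step 1. Indeed, $\nnorm{\hat b_t^i}^q\le C_q(1+(\hat\Gamma_T^i)^{q/2})$, and polynomial moments of $\hat\Gamma_T^i$ are dominated by $\E[e^{\delta\hat\Gamma_T^i}]$ for any admissible $\delta$. Alternatively, one can invoke \cref{prop: BOUNDS ON THE SUP OF THE P TH MOMENTS OF THE SUP NORM OF Z} together with \eqref{eq: UPPERBOUND ON THE SUP OF ZS FOR ANY I} and \cref{prop: BOUNDS ON THE SUP OF THE P TH MOMENTS OF THE SUP NORM OF X}.

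\textbf{Step 3: the first claim.} Write
\[
\mathcal E_t^{1-p}=\exp\Big\{(p-1)M_t+\tfrac{p-1}{2}\langle M\rangle_t\Big\}.
\]
Split it as a product of a stochastic exponential and a remainder. For a parameter $r>1$, Hölder with exponents $r, r'$ gives
\[
\E[\mathcal E_t^{1-p}]\le \E\Big[\exp\big\{r(p-1)M_t-\tfrac{r^2(p-1)^2}{2}\langle M\rangle_t\big\}\Big]^{1/r}\,\E\Big[\exp\big\{r'\big(\tfrac{p-1}{2}+\tfrac{r(p-1)^2}{2}\big)\langle M\rangle_t\big\}\Big]^{1/r'}.
\]
The first factor is the expectation of a nonnegative local martingale, hence a supermartingale, so it is at most $1$. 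For the second factor we use $\langle M\rangle_t\le \int_0^T\beta_s^2\diff s\le CT(1+\hat\Gamma^i_T)$. It is therefore bounded by $C\,\E[e^{\kappa(p)\hat\Gamma_T^i}]^{1/r'}$ with
\[
\kappa(p)=CT\,r'\big(\tfrac{p-1}{2}+\tfrac{r(p-1)^2}{2}\big).
\]
Fix, say, $r=2$. Then $\kappa(p)\to0$ as $p\downarrow1$, so for $p$ close enough to $1$ we have $\kappa(p)<\min\{\gamma/4,1/8C_\sigma^2T\}$. \cref{prop: UNIFORM BOUNDS ON THE EXPONENTIAL OF GAMMA HAT T} then bounds this factor uniformly in $i$ and $N$.

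\textbf{Main obstacle.} The only delicate point is Step 3. The exponent must be made small through the choice of $p$ rather than through any smallness of the drift, since $\hat\Gamma$ only has exponential moments for small $\delta$. One must also check that the constants in Step 1 do not depend on $N$; this holds because every bound passes through $\hat\Lambda^i$ and $\hat\Gamma^i$. Constants depend only on $p,T,\mathfrak K$, the law of $X_0$, and $b,\sigma$, as claimed.
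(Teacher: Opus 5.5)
Your proposal is correct and follows essentially the same route as the paper. The paper also splits $\mathcal{E}_t^{1-p}$ by Hölder into a power of a stochastic exponential, bounded by $1$, times an exponential of $\langle M\rangle_t$; it controls the latter through $|\hat b^i_s|\le C(1+\hat\Lambda^i_s)$, $(\hat\Lambda^i_T)^2\le 2\hat\Gamma^i_T$ and \cref{prop: UNIFORM BOUNDS ON THE EXPONENTIAL OF GAMMA HAT T}, and gets the $q$-th moment claim from the same growth bound. The only difference is cosmetic: the paper takes your $r$ to be $p/(p-1)$, so the first factor is $\E[\mathcal{E}(pM)_t]^{(p-1)/p}$, and it invokes the martingale property of $\mathcal{E}(pM)$, whereas your fixed $r=2$ together with the supermartingale bound $\E[\mathcal{E}(\cdot)_t]\le 1$ needs slightly less.
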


\begin{proof}
	We define $\diff{}Y_t =\hat{b}_t^i\ind_{\{\Upsilon_t\left(X_t^i\right) < 0\}}\diff{}B_t^i$. Then, for any $p > 1$, letting $\mathcal{E}(pY)$ denote the stochastic exponential of $pY$, we observe
	
	\begin{align*}
		\E\left[\mathcal{E}_t^{1 - p}\right] &=
		\E\left[\exp\left\{-(1-p)Y_t - \frac{1-p}{2}\langle Y \rangle_t\right\}\right]\\
		&= \E\left[\exp\left\{pY_t - \frac{1}{2}\langle pY\rangle_t\right\}^{\frac{p-1}{p}}\exp\left\{\frac{p(p+1)(p-1)}{2}\langle Y\rangle_t\right\}^{\frac{1}{p}}\right]\\
		&\le \E\left[\mathcal{E}(pY)_t\right]^\frac{p-1}{p}\E\left[\exp\left\{\frac{p(p+1)(p-1)}{2}\langle Y\rangle_t\right\}\right]^{\frac{1}{p}},
	\end{align*}
	where the last line follows from H\"older's inequality. Using the same proof as in \cref{prop: NOVIKOV CONDITION FOR THE STOCHASTIC EXPONENTIAL}, we have that $\mathcal{E}(pY)$ is a martingale. Therefore, the first term is bounded by $1$. Hence, we deduce
	\begin{equation*}
		\E\left[\mathcal{E}_t^{1 - p}\right]  \le \E\left[\exp\left\{C_p \int_0^t (\hat{b}_s^i)^2\ind_{\{\Upsilon_s\left(X_s^i\right) < 0\}}\diff{}s\right\}\right]^{\frac{1}{p}},
	\end{equation*}
	with $C_p = p(p-1)(p+1)/2$. Since $\nnorm{\hat{b}_s^i} \le C_{b,\sigma,\mathfrak{K}}(1 + \hat{\Lambda}_t^i)$ by \eqref{eq: GROWTH CONDITION ON THE EQUATION OF B HAT IN CHANGE OF MEASURE}, we have
	\begin{equation*}
		\E\left[\mathcal{E}_t^{1 - p}\right]  \le e^{\frac{2TC_{b,\sigma,\mathfrak{K}}C_p}{p}}\E\left[\exp\left\{TC_{b,\sigma,\mathfrak{K}}C_p (\hat{\Lambda}_t^{i})^2\right\}\right]^{\frac{1}{p}}.
	\end{equation*}
	Now, by choosing $p \in (1,p_0)$, where $p_0 \coloneqq \sup \{p > 1\,:\, 2TC_pC < \min\{\gamma/4, 1/8C_\sigma^2 T\}\}$ and $ \gamma$ is the constant in \cref{ass: ASSUMPTIONS FROM SOJMARK SPDE PAPER APPLIED TO THE NEUROSCIENCE SETTING} \eqref{ass: ASSUMPTIONS FROM SOJMARK SPDE PAPER APPLIED TO THE NEUROSCIENCE SETTING FOUR}, we have
	\begin{align*}
		\E\left[\mathcal{E}_t^{1 - p}\right]
		&\le e^{\frac{2TC_{b,\sigma,\mathfrak{K}}C_p}{p}}\E\left[\exp\left\{TC_pC(\hat{\Lambda}_t^{i})^2\right\}\right]^{\frac{1}{p}}\\
		&\le e^{\frac{2TC_{b,\sigma,\mathfrak{K}}C_p}{p}}\E\left[\exp\left\{2TC_pC \hat{\Gamma}_t^{i} \right\}\right]^{\frac{1}{p}} \leq C.
	\end{align*}
	The last line follows from \cref{prop: UNIFORM BOUNDS ON THE EXPONENTIAL OF GAMMA HAT T}, and the constant depends on $p,\,T,\,\mathfrak{K}$, the initial distribution $X_0$, and the bounds on $b$ and $\sigma$, but is independent of $i$ and $N$.
	
	For the second claim, by \cref{prop: SCALE TRANSFORM LEMMA } we deduce that $\nnorm{\hat{b}_s^i} \le C(1 + \hat{\Lambda}_s^i)$. Hence, the claim follows from applying \cref{cor: SUBGUASSIANTY COROLLARY}.
\end{proof}

\subsection{Decay of Hitting Time Probabilities}
With the results from the previous subsections, we can finally establish an exponential decay property of the hitting time probabilities.

\begin{proposition}[Exponential decay of hitting times]\label{prop: EXPONENTIAL DECAY OF THE HITTING PROBABILITIES}
	For any $i,k \ge 1$ and for all $p > 1$, sufficiently close to $1$, we have
	\begin{equation*}
		\prob\left[\tau_k^i \le T\right] \le C^{\frac{1}{p}}e^{-\frac{(k-1)^2\gamma^2(p-1)}{2pTC_\sigma^2}},
	\end{equation*}
	for some constant $C > 0$ that depends on $p$ and $T$ but holds uniformly in $i,\,k$, and $N$, and $ \gamma$ is the constant in \cref{ass: ASSUMPTIONS FROM SOJMARK SPDE PAPER APPLIED TO THE NEUROSCIENCE SETTING} \eqref{ass: ASSUMPTIONS FROM SOJMARK SPDE PAPER APPLIED TO THE NEUROSCIENCE SETTING FOUR}.
\end{proposition}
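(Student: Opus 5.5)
Under the measure $\mathbbm{Q}$ of \cref{prop: NOVIKOV CONDITION FOR THE STOCHASTIC EXPONENTIAL}, the transformed particle $\Upsilon_t(X_t^i)$ moves as a driftless Brownian motion while it is below zero. The event $\{\tau_k^i \le T\}$ forces this Brownian motion to climb a total height of at least $(k-1)\gamma$ (up to the constant relating $\Upsilon$ and $x$), which has Gaussian probability under $\mathbbm{Q}$. I would then return to $\prob$ by Hölder's inequality combined with \cref{prop: BOUNDS ON THE 1 - P NORM OF THE RADON NIKODYM ESTIMATE}.

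\textbf{Step 1 (change of measure).} Write $\prob[\tau_k^i\le T] = \E_{\mathbbm{Q}}[\mathcal{E}_T^{-1}\ind_{\{\tau_k^i\le T\}}]$. By Hölder with exponents $p$ and $p/(p-1)$ under $\mathbbm{Q}$,
\begin{equation*}
\prob[\tau_k^i\le T] \le \E_{\mathbbm{Q}}\big[\mathcal{E}_T^{-p}\big]^{1/p}\,\mathbbm{Q}[\tau_k^i\le T]^{(p-1)/p}.
\end{equation*}
Moreover, $\E_{\mathbbm{Q}}[\mathcal{E}_T^{-p}] = \E[\mathcal{E}_T^{1-p}] \le C$ by \cref{prop: BOUNDS ON THE 1 - P NORM OF THE RADON NIKODYM ESTIMATE}, for $p$ close to $1$. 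This produces the factor $C^{1/p}$, and it remains to show
\begin{equation*}
\mathbbm{Q}[\tau_k^i\le T]\le e^{-(k-1)^2\gamma^2/(2TC_\sigma^2)}
\end{equation*}
(or twice this; any constant can be absorbed into $C$).

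\textbf{Step 2 (reduction to a Brownian maximum).} Let $\tilde B_t = \int_0^t \hat b^i_s\ind\diff s + B^i_t$, a $\mathbbm{Q}$-Brownian motion. By \cref{prop: SCALE TRANSFORM LEMMA }, between resets $\Upsilon_t(X^i_t)$ evolves as $\tilde B$ time-changed by the clock $\int\ind_{\{X<0\}}\diff s$, which runs at most at rate $1$. At each reset it jumps down to $\Upsilon(-\xi)\le -\gamma/C_\sigma$, since $|\partial_x\Upsilon|\ge C_\sigma^{-1}$.

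Consider the analogue $\tilde Z$ of $Z^i$: the transformed process with the reset jumps removed. Then $\tilde Z_t$ equals $\Upsilon_0(X_0^i)\le 0$ plus $\tilde B$ evaluated at the clock. To reach $0$ for the $k$-th time, this integrated increment must exceed $(k-1)\gamma/C_\sigma$, exactly as in the derivation of \eqref{eq: BOUNDS ON THE JUMPS OF PARTICLE I AND THE FIRING FUNCTION BY THE Z PARTICLE}. Hence
\begin{equation*}
\{\tau_k^i\le T\}\subseteq\Big\{\sup_{u\le T}\tilde B_u\ge (k-1)\gamma/C_\sigma\Big\}.
\end{equation*}

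\textbf{Step 3 (Gaussian tail).} The reflection principle gives
\begin{equation*}
\mathbbm{Q}\Big[\sup_{u\le T}\tilde B_u\ge a\Big]\le 2e^{-a^2/(2T)}, \qquad a=(k-1)\gamma/C_\sigma.
\end{equation*}
Raising this to the power $(p-1)/p$ yields exactly $e^{-(k-1)^2\gamma^2(p-1)/(2pTC_\sigma^2)}$, with the factor $2^{(p-1)/p}$ absorbed into $C^{1/p}$.

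\textbf{Main obstacle.} The delicate point is Step 2. The process pauses during refractory periods, the time change is random, and the $\hat b$-drift is removed only while the particle is below zero. The cleanest route is to note that the $\tilde B$-increments accumulated over the active periods telescope into a single increment $\tilde B_{\tau_k}-\tilde B_{0}$, in real time, of size at least $(k-1)\gamma/C_\sigma$. One must also handle the $\partial_t\Upsilon$ contribution: it is already folded into $\hat b$ and vanishes at $0$, so it needs no separate treatment. I would first verify that $\Upsilon_t$ of each reset level stays $\le -\gamma/C_\sigma$ uniformly in $t$.
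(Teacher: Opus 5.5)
Your proposal is correct and follows essentially the same route as the paper. You pass to $\mathbbm{Q}$ by H\"older with $\E[\mathcal{E}_T^{1-p}]\le C$, reduce $\{\tau_k^i\le T\}$ to the running maximum of $\int_0^\cdot\ind_{\{X^i_s<0\}}\diff\tilde B_s$ exceeding $(k-1)\gamma C_\sigma^{-1}$ (using $-\Upsilon(-\xi)\ge\gamma C_\sigma^{-1}$ and $\Upsilon_0(X_0^i)<0$), and finish with Dubins--Schwarz and the reflection principle.

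One caution: the ``telescoping'' shortcut in your last paragraph is false. $\tilde B$ keeps moving during the refractory periods, where the indicator switches the integral off, so the active-period increments do not sum to $\tilde B_{\tau_k}-\tilde B_0$. Stick with the time change of Step~2. There the relevant Brownian motion is the Dubins--Schwarz one, run on the clock $\int_0^t\ind_{\{X^i_s<0\}}\diff s\le t$, not $\tilde B$ itself.
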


\begin{proof}[Proof of \cref{prop: EXPONENTIAL DECAY OF THE HITTING PROBABILITIES}]
	To begin, we fix $i,\,k \ge 1$. By the definition of $\Upsilon_t(x)$, the $k^{\textnormal{th}}$ hitting time of $X^i$ is the $k^{\textnormal{th}}$ hitting time of $\Upsilon_t(X_t^i)$. As
	\begin{equation*}
		\diff\Upsilon_t\left(X_t^i\right) = \hat{b}_t^i \ind_{\{\Upsilon_t\left(X_t^i\right) < 0\}}\diff{t} + \ind_{\{\Upsilon_t\left(X_t^i\right) < 0\}} \diff{B_t^i} + \diff\sum_{\tilde{k} \ge 1} \Upsilon_{\tau_{\tilde{k}}^i + \varsigma_{\tilde{k}}^i}(-\xi_{\tilde{k}}^i)\ind_{[0,t]}(\tau_{\tilde{k}}^i + \varsigma_{\tilde{k}}^i)
	\end{equation*}
	by \cref{prop: SCALE TRANSFORM LEMMA }, then $\tau_k^i$ corresponds to the first time the process $\Upsilon_0(X_0^i) +  \int_0^t \ind_{\{\Upsilon_s\left(X_s^i\right) < 0\}} \diff{}\tilde{B}_s^i = \Upsilon_t\left(X_t^i\right) - \sum_{{\tilde{k}} \ge 1} \Upsilon_{\tau_{\tilde{k}}^i + \varsigma_{\tilde{k}}^i}(-\xi_{\tilde{k}}^i)\ind_{[0,t]}(\tau_{\tilde{k}}^i + \varsigma_{\tilde{k}}^i)$ hits the level $-\sum_{{\tilde{k}} = 1}^{k-1} \Upsilon_{\tau_{\tilde{k}}^i + \varsigma_{\tilde{k}}^i}(-\xi_{\tilde{k}}^i)\ind_{[0,t]}(\tau_{\tilde{k}}^i + \varsigma_{\tilde{k}}^i)$. Here, $\tilde{B}_t^i = \int_0^t\hat{b}_s^i \ind_{\{\Upsilon_s\left(X_s^i\right) < 0\}}\diff{s} + {B_t^i}$ is a $\mathbbm{Q}$-standard Brownian motion by \cref{prop: NOVIKOV CONDITION FOR THE STOCHASTIC EXPONENTIAL}. Thus,
	{\small\begin{equation*}
			\prob\left[\tau_k^i \le T\right] = \prob\left[\Upsilon_0(X_0^i) + \sup_{t \le T} \int_0^t \ind_{\{\Upsilon_s\left(X_s^i\right) < 0\}} \diff{}\tilde{B}_s^i \ge -\sum_{{\tilde{k}} = 1}^{k-1} \Upsilon_{\tau_{\tilde{k}}^i + \varsigma_{\tilde{k}}^i}(-\xi_{\tilde{k}}^i)\ind_{[0,t]}(\tau_{\tilde{k}}^i + \varsigma_{\tilde{k}}^i) \right]. 
	\end{equation*}}
	By the bounds on $\sigma$ and $\xi_{\tilde{k}}^i$ given in \cref{ass: ASSUMPTIONS FROM SOJMARK SPDE PAPER APPLIED TO THE NEUROSCIENCE SETTING} \eqref{ass: ASSUMPTIONS FROM SOJMARK SPDE PAPER APPLIED TO THE NEUROSCIENCE SETTING ONE} and \eqref{ass: ASSUMPTIONS FROM SOJMARK SPDE PAPER APPLIED TO THE NEUROSCIENCE SETTING FOUR}, we have
	\begin{equation*}
		- \Upsilon_{\tau_{\tilde{k}}^i + \varsigma_{\tilde{k}}^i}(-\xi_{\tilde{k}}^i) = \int_{-\xi_{\tilde{k}}^i}^{0} \frac{\diff y}{\sigma(\tau_{\tilde{k}}^i + \varsigma_{\tilde{k}}^i,y)} \ge \xi_{\tilde{k}}^iC_\sigma^{-1} \ge \gamma C_{\sigma}^{-1}.
	\end{equation*}
	Therefore,
	\begin{align*}
		\prob\left[\tau_k^i \le T\right] &= \prob\left[\Upsilon_0(X_0^i) + \sup_{t \le T} \int_0^t \ind_{\{\Upsilon_s\left(X_s^i\right) < 0\}} \diff{}\tilde{B}_s^i \ge -\sum_{{\tilde{k}} = 1}^{k-1} \Upsilon_{\tau_{\tilde{k}}^i + \varsigma_{\tilde{k}}^i}(-\xi_{\tilde{k}}^i)\ind_{[0,t]}(\tau_{\tilde{k}}^i + \varsigma_{\tilde{k}}^i) \right]\\
		&\le \prob\left[\Upsilon_0(X_0^i) + \sup_{t \le T} \int_0^t \ind_{\{\Upsilon_s\left(X_s^i\right) < 0\}} \diff{}\tilde{B}_s^i \ge (k-1)\gamma C_\sigma^{-1}\right] \\
		&\le \prob\left[\sup_{t \le T} \int_0^t \ind_{\{\Upsilon_s\left(X_s^i\right) < 0\}} \diff{}\tilde{B}_s^i \ge (k-1)\gamma C_\sigma^{-1}\right], \\ \intertext{where the last line follows from the fact that $\Upsilon_0(X_0^i) < 0$ a.s. because $X_0^i < 0$ a.s. Then, for all $p > 1$ and close enough to $1$, by H\"older's inequality,}
		&\le \E\left[\mathcal{E}_t^{1 - p}\right]^{\frac{1}{p}}\mathbbm{Q}\left[\sup_{t \le T} \int_0^t \ind_{\{\Upsilon_s\left(X_s^i\right) < 0\}} \diff{}\tilde{B}_s^i \ge (k-1)\gamma C_\sigma^{-1}\right]^{\frac{p-1}{p}}\\
		&\le C^{\frac{1}{p}}\mathbbm{Q}\left[\sup_{t \le T} \tilde{B}_t^i \ge (k-1)\gamma C_\sigma^{-1}\right]^{\frac{p-1}{p}}
	\end{align*}
	where the second term in the last line follows from the Dubins-Schwarz Theorem, and $C$ is the constant from \cref{prop: BOUNDS ON THE 1 - P NORM OF THE RADON NIKODYM ESTIMATE}. By the Reflection Principle for Brownian motion, $\mathbbm{Q}\left[\sup_{t \le T} \tilde{B}_t^i \ge (k-1)\gamma C_\sigma^{-1}\right] = 2 \mathbbm{Q}\left[ \tilde{B}_T^i \ge (k-1)\gamma C_\sigma^{-1}\right]$. Therefore,
	\begin{equation*}
		\prob\left[\tau_k^i \le T\right]\le C^{\frac{1}{p}}\mathbbm{Q}\left[\sup_{t \le T} \tilde{B}_t^i \ge (k-1)\gamma C_\sigma^{-1}\right]^{\frac{p-1}{p}}\le  2^{\frac{p-1}{p}}C^{\frac{1}{p}}\mathbbm{Q}\left[\tilde{B}_T^i \ge (k-1)\gamma C_\sigma^{-1}\right]^{\frac{p-1}{p}}.
	\end{equation*}
	The claim now follows from employing the well-known result $\prob[\mathcal{N}(0,1) > x] \le \frac{1}{2}e^{-\frac{x^2}{2}}$ for any $x > 0$.
\end{proof}

\section{Concentration and Boundary Decay of the Particles}

By employing the Radon-Nikodym estimate from \cref{prop: BOUNDS ON THE 1 - P NORM OF THE RADON NIKODYM ESTIMATE}, we can derive a simple bound on the probability that $X^i$ is in any given set, conditioned on the event $\{X_t^i < 0\}$. Crucially, this bound holds uniformly in $i$ and $N$.

\begin{proposition}\label{prop: PROPOSTION SHOWING THAT WE HAVE AN UPPER BOUND ON THE PROBABILITYS OF XTI TAKING VALUES IN SOME SET}
	For any set $S \in \mathcal{B}(\R)$, $i = 1,\ldots,N$, and $\delta > 0$ sufficiently close to $0$, we have
	\begin{equation*}
		\prob\left[X_t^i \in S, X_t^i < 0\right] \le C t^{-\frac{\delta}{2}}\left[1 + \sum_{k \ge 1}\prob\left[\tau_k^i \le t\right]^\delta\right]\operatorname{Leb}(S)^{\delta},
	\end{equation*}
	where $C > 0$ is a constant that depends on $p,\,T,\,\mathfrak{K},\,\pref$, the initial distribution $X_0$, and the bounds on $b$ and $\sigma$, but is independent of $i$ and $N$.
\end{proposition}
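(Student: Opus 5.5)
We pass to the scale-transformed process $\Upsilon_t(X^i_t)$ of \cref{prop: SCALE TRANSFORM LEMMA }, change measure to $\mathbbm{Q}$ via \cref{prop: NOVIKOV CONDITION FOR THE STOCHASTIC EXPONENTIAL}, and use the H\"older trick from the proof of \cref{prop: EXPONENTIAL DECAY OF THE HITTING PROBABILITIES}. For $p>1$ close to $1$, H\"older's inequality and \cref{prop: BOUNDS ON THE 1 - P NORM OF THE RADON NIKODYM ESTIMATE} give, for any event $A\in\mathcal{F}_t$,
\[
\prob[A] = \E^{\mathbbm{Q}}\bigl[\mathcal{E}_t^{-1}\ind_A\bigr] \le \E\bigl[\mathcal{E}_t^{1-p}\bigr]^{\frac1p}\,\mathbbm{Q}[A]^{\frac{p-1}{p}} \le C\,\mathbbm{Q}[A]^{\delta},\qquad \delta:=\tfrac{p-1}{p}.
\]
Taking $p$ close to $1$ makes $\delta$ small, which is why the statement requires $\delta$ to be sufficiently close to $0$. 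Let $\tilde S := \Upsilon_t(S\cap(-\infty,0))$. Since $\partial_x\Upsilon_t = 1/\sigma\in[C_\sigma^{-1},C_\sigma]$, we have $\operatorname{Leb}(\tilde S)\le C_\sigma \operatorname{Leb}(S)$. It therefore suffices to bound $\mathbbm{Q}[\Upsilon_t(X^i_t)\in\tilde S,\,X^i_t<0]$ by $C t^{-1/2}(\ldots)\operatorname{Leb}(S)$, up to a rearrangement that produces the stated form.

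Next we decompose according to the number of completed refractory periods: $\{X^i_t<0\} = E_0\cup\bigcup_{k\ge1}E_k$. Here $E_0=\{\tau_1^i>t\}$, and $E_k=\{\tau_k^i+\varsigma_k^i\le t<\tau_{k+1}^i\}$. For each piece we apply the H\"older step separately \emph{before} summing, so that $\prob[\cdot\,,E_k]\le C\,\mathbbm{Q}[\cdot\,,E_k]^\delta$ and $\mathbbm{Q}[\cdot\,,E_k]\le \mathbbm{Q}[\cdot\,,\tau_k^i\le t]$.

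On $E_0$, under $\mathbbm{Q}$ the process $\Upsilon_s(X_s^i)$ is $\Upsilon_0(X_0^i)+\tilde B_s$ up to $\tau_1^i$. Its density at time $t$ is bounded by the free Gaussian density, i.e.\ by $(2\pi t)^{-1/2}$; we may also use $V_0\in L^2$ to avoid the $t^{-1/2}$ factor, but the cruder bound suffices. This gives $\mathbbm{Q}[\,\cdot\,,E_0]\le Ct^{-1/2}\operatorname{Leb}(S)$.

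On $E_k$, condition on $\mathcal{F}_{\tau_k^i+\varsigma_k^i}$. After the reset the particle starts at $\Upsilon(-\xi_k^i)$ and evolves as a $\mathbbm{Q}$-Brownian motion killed at $0$ for the remaining time $t-\tau_k^i-\varsigma_k^i$. Its conditional density is therefore at most $(2\pi(t-\tau_k^i-\varsigma_k^i))^{-1/2}$. Since $\varsigma_k^i$ is independent of $\tau_k^i$ (this independence should also be checked under $\mathbbm{Q}$, or else we condition under $\prob$ and redo the estimate there) and has density $\pref\in\mathcal{W}^{1,1}_0\subset L^\infty$, integrating over $\varsigma_k^i$ gives
\[
\int_0^{t-\tau}\pref(r)\,(t-\tau-r)^{-1/2}\,dr\le 2\norm{\pref}_\infty\sqrt{T}.
\]
The resulting bound is $\mathbbm{Q}[\,\cdot\,,E_k]\le C\,\mathbbm{Q}[\tau_k^i\le t]\operatorname{Leb}(S)$.

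The main obstacle is passing from $\mathbbm{Q}[\tau_k^i\le t]$ back to $\prob[\tau_k^i\le t]$, which the statement requires. The density bound is $\le C\operatorname{Leb}(S)$ only as a conditional bound, so the cleaner route is to perform the density estimate conditionally under $\prob$ directly on $E_k$. To do this, apply the H\"older change of measure on the post-reset interval only, using the conditional version of \cref{prop: BOUNDS ON THE 1 - P NORM OF THE RADON NIKODYM ESTIMATE}. This yields $\prob[\,\cdot\,,E_k]\le \E\bigl[\ind_{\{\tau_k^i\le t\}}\,C(\operatorname{Leb} S)^\delta\bigr]$. If conditional uniformity is hard to obtain, a final H\"older split $\prob[A\cap\{\tau_k\le t\}]\le \prob[A]^{1-\delta'}\prob[\tau_k\le t]^{\delta'}$ can be used instead; this again produces powers of $\prob[\tau_k^i\le t]$, and their sum is finite by \cref{prop: EXPONENTIAL DECAY OF THE HITTING PROBABILITIES}. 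Summing over $k$ and collecting powers then gives the claim, with the $t^{-\delta/2}$ factor coming from $E_0$.
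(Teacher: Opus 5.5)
Your proposal is correct. On $E_0$ it is the paper's argument, but on the pieces $E_k$ you take a genuinely different route.

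The paper keeps the outer expectation under $\prob$. It conditions on $(\tau_k^i+\varsigma_k^i,\xi_k^i)$, applies a conditional and then an unconditional H\"older inequality, and integrates the Gaussian bound $C(t-s)^{-1/2}\operatorname{Leb}(S)$ against the $\prob$-law of $\tau_k^i+\varsigma_k^i$. The $\prob$-independence of $\tau_k^i$ and $\varsigma_k^i$ then gives $\prob[\tau_k^i\le t]^{\delta}\operatorname{Leb}(S)^{\delta}$ in one step, with matching exponents.

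You instead run the density estimate entirely under $\mathbbm{Q}$, which needs $\tau_k^i$ and $\varsigma_k^i$ to be independent under $\mathbbm{Q}$. This is true. By optional stopping for the true martingale $\mathcal{E}$, $\mathbbm{Q}$ has density $\mathcal{E}_{\tau_k^i\wedge t}$ on $\mathcal{F}_{\tau_k^i\wedge t}$. That density is a functional of the system before $\tau_k^i$, which never sees $\varsigma_k^i$. This is slightly more than the bare independence of $\tau_k^i$ and $\varsigma_k^i$ that the paper imports. You then have the $k$-uniform bound $\prob[X_t^i\in S,E_k]\le C\operatorname{Leb}(S)^\delta$. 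Your fallback, which is just $\min\{x,y\}\le x^{1-\delta'}y^{\delta'}$ with $y=\prob[\tau_k^i\le t]$, gives the claim after passing to a common smaller exponent. This is harmless, since the asserted bound is trivial once $t^{-1/2}\operatorname{Leb}(S)\ge 1$.

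What your route buys is that it never mixes $\prob$- and $\mathbbm{Q}$-conditional expectations. The paper's identity $\E^{\prob}[\ind_A\mid\mathcal{G}]=\E^{\mathbbm{Q}}[\mathcal{E}_t^{-1}\ind_A\mid\mathcal{G}]$ omits the Bayes normaliser $\E^{\mathbbm{Q}}[\mathcal{E}_t^{-1}\mid\mathcal{G}]$. This need not be $1$, because $\prob$ and $\mathbbm{Q}$ differ on $\mathcal{G}=\sigma(\tau_k^i+\varsigma_k^i,\xi_k^i)$.

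Your option (a) is the rigorous form of the paper's idea, and it is simpler than you fear. Set $\rho=\tau_k^i+\varsigma_k^i$ and change measure only through $\mathcal{E}_t/\mathcal{E}_{\rho\wedge t}$, which leaves $\prob$ unchanged on $\mathcal{F}_{\rho\wedge t}$. One unconditional H\"older step then yields $C\operatorname{Leb}(S)^\delta\prob[\tau_k^i\le t]^\delta$ using only $\prob$-independence, with $\E[(\mathcal{E}_t/\mathcal{E}_{\rho\wedge t})^{1-p}]$ bounded exactly as in \cref{prop: BOUNDS ON THE 1 - P NORM OF THE RADON NIKODYM ESTIMATE}. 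No conditional version of that proposition is needed. However, you get the power $\delta$ on $\prob[\tau_k^i\le t]$, not the first power you wrote.

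Finally, the obstacle you single out is milder than it looks. The reflection argument in the proof of \cref{prop: EXPONENTIAL DECAY OF THE HITTING PROBABILITIES} already bounds $\mathbbm{Q}[\tau_k^i\le t]$ by a Gaussian tail in $k$, so $\sum_k\mathbbm{Q}[\tau_k^i\le t]^\delta<\infty$ directly. Returning to $\prob$ matters only for matching the literal form of the statement.
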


\begin{proof}
	To begin, we fix $i = 1,\ldots,N$ and $t \in [0,T]$. Then, by the construction of $X_t^i$, $X_t^i < 0$ if and only if $t \in [0,\tau_1^i) \cup (\cup_{k \ge 1}[\tau_k^i + \varsigma_k^i, \tau_{k+1}^i))$. Therefore, by the law of total probability,
	\begin{equation}\label{eq: THE SUM DECOMPOSITION FOR XTI IN THE SPATIAL REGULARITY PROPOSITION}
		\prob\left[X_t^i \in S, X_t^i < 0\right] = \prob\left[X_t^i \in S, t < \tau_1^i\right] + \sum_{k \ge 1} \prob\left[X_t^i \in S, \tau_k^i + \varsigma_k^i\le t < \tau_{k +1}^i\right].
	\end{equation}
	The goal now is to apply the scale transform and employ the change of measure to rewrite the above as the probability of a Brownian motion with random jumps. As the scale transform is an injective function, we have
	\begin{align*}
		\prob\left[X_t^i \in S, X_t^i < 0\right] 
		=& \prob\left[\Upsilon_t(X_t^i) \in \Upsilon_t(S), t < \tau_1^i\right] \\
		&+ \sum_{k \ge 1} \prob\left[\Upsilon_t(X_t^i) \in \Upsilon_t(S), \tau_k^i + \varsigma_k^i\le t < \tau_{k +1}^i\right].
	\end{align*}
	We shall analyse each term above individually. On the event $\{t < \tau_1^i\}$, we have
	\begin{equation*}
		\Upsilon_t(X_t^i) = \Upsilon_0(X_0^i) + \tilde{B}_t, 
	\end{equation*}
	where $\tilde{B}$ is a $\mathbbm{Q}$-standard Brownian motion by \cref{prop: NOVIKOV CONDITION FOR THE STOCHASTIC EXPONENTIAL}. Therefore, by H\"older's inequality, we have
	\begin{equation}\label{eq: FIRST EQUATION IF THE SPATIAL REGULARITY OF X PROOF}
		\begin{aligned}
			\prob\left[\Upsilon_t(X_t^i) \in \Upsilon_t(S), t < \tau_1^i\right] &\le \prob\left[\Upsilon_0(X_0^i) + \tilde{B}_t \in \Upsilon_t(S)\right]\\
			&\le \E^\prob\left[\mathcal{E}_t^{1-p}\right]^{\frac{1}{p}}\mathbbm{Q}\left[\Upsilon_0(X_0^i) + \tilde{B}_t \in \Upsilon_t(S)\right]^{\frac{p-1}{p}}.
		\end{aligned}
	\end{equation}
	The right-hand side of the above is finite for all $p > 1$ sufficiently close to $1$ by \cref{prop: BOUNDS ON THE 1 - P NORM OF THE RADON NIKODYM ESTIMATE}. Girsanov's theorem, \citep[Chapter~3,Theorem~5.1]{karatzas2012brownian}, gives us that $\tilde{B}$ is a Brownian motion with respect to the same filtration of our original probability space. Therefore, as $X_0^i$ is $\mathcal{F}_0$-measurable and $\tilde{B}$ is independent of $\mathcal{F}_0$,
	\begin{align*}
		\mathbbm{Q}\left[\Upsilon_0(X_0^i) + \tilde{B}_t \in \Upsilon_t(S)\right] &= \int_\R \mathbbm{Q}\left[\Upsilon_0(x) + \tilde{B}_t \in \Upsilon_t(S)\right] \diff \nu_0(x)\\
		&= \int_\R \int_{\Upsilon_t(S) - \Upsilon_0(x)} (2 \pi t)^{-1/2}e^{-\frac{y^2}{2t}} \diff y \diff \nu_0(x)\\
		&= \int_\R \int_{S} (2 \pi t)^{-1/2}e^{-\frac{(\Upsilon_t(\tilde{y}) - \Upsilon_0(x))^2}{2t}} \Upsilon^\prime_t(\tilde{y}) \diff \tilde{y} \diff \nu_0(x)\\
		&\le C_\sigma (2 \pi t)^{-1/2} \operatorname{Leb}{(S)} \int_\R \diff \nu_0(x)\\
		&= C_\sigma (2 \pi t)^{-1/2} \operatorname{Leb}{(S)}
	\end{align*}
	where the third line follows from employing the substitution $\tilde{y} = \Upsilon_t^{-1}(y + \Upsilon_0(x))$, and the fourth line follows from upper bounding the exponential term and the $\Upsilon^\prime_t(\tilde{y})$ term. Setting $\delta = (p-1)p^{-1}$, by \eqref{eq: FIRST EQUATION IF THE SPATIAL REGULARITY OF X PROOF}, we have the bound
	\begin{equation}\label{eq: THE FRIST UPPERBOUND IN THE SPATIAL REGULARIY LEMMA}
		\prob\left[\Upsilon_t(X_t^i) \in \Upsilon_t(S), t < \tau_1^i\right] \le C_{\sigma,\delta}\E^\prob\left[\mathcal{E}_t^{1-p}\right]^{\frac{1}{p}}t^{-\frac{\delta}{2}}\operatorname{Leb}(S)^\delta.
	\end{equation}
	Now, we turn our attention to the second term. By construction, $\tau_k^i$ is independent of $\varsigma_k^i$ under the measure $\prob$, but independence need not be preserved under a change of measure. Hence, we apply the tower property of conditional expectation to preserve the independence between $\tau_k^i$ and $\varsigma_k^i$ and estimate conditional probabilities of $\tilde{B}$ under $\mathbbm{Q}$. We observe that on the event $\{\tau_k^i + \varsigma_k^i \le t < \tau_{k+1}^i\}$, we have
	\begin{equation*}
		\Upsilon_t(X_t^i) = \Upsilon_{\tau_k^i + \varsigma_k^i}(-\xi_k^i) + \tilde{B}_t - \tilde{B}_{\tau_k^i + \varsigma_k^i}. 
	\end{equation*}
	Therefore, 
	\begin{align*}
		&\prob\left[\Upsilon_t(X_t^i) \in \Upsilon_t(S), \tau_k^i + \varsigma_k^i\le t < \tau_{k +1}^i\right] \\
		&\hspace{4cm}\le \prob\left[\Upsilon_{\tau_k^i + \varsigma_k^i}(-\xi_k^i) + \tilde{B}_t - \tilde{B}_{\tau_k^i + \varsigma_k^i} \in \Upsilon_t(S), \tau_k^i + \varsigma_k^i\le t\right]. 
	\end{align*}
	Letting the event on the R.H.S. above be denoted by $A$, we deduce
	\begin{align}
		\prob\left[A\right] &= \E^\prob\left[\E^\prob\left[\ind_A \mid \tau_k^i + \varsigma_k^i, \xi_k^i\right]\right] \notag \\
		&= \E^\prob\left[\E^{\mathbbm{Q}}\left[\mathcal{E}_t^{-1}\ind_A \mid \tau_k^i + \varsigma_k^i, \xi_k^i\right]\right] \notag\\
		&\le \E^\prob\left[\E^{\prob}\left[\mathcal{E}_t^{1-p} \mid \tau_k^i + \varsigma_k^i, \xi_k^i\right]^{\frac{1}{p}}\E^{\mathbbm{Q}}\left[\ind_A \mid \tau_k^i + \varsigma_k^i, \xi_k^i\right]^{\frac{p-1}{p}}\right]\notag\\
		&\le \E^\prob\left[\mathcal{E}_t^{1-p}\right]^{\frac{1}{p}}\E^\prob\left[\E^{\mathbbm{Q}}\left[\ind_A \mid \tau_k^i + \varsigma_k^i, \xi_k^i\right]\right]^{\frac{p-1}{p}}, \label{eq: THE SECOND EQUATION IN THE SPATIAL REGULARTY PROPOSITION}
	\end{align}
	where the penultimate line follows from the Conditional H\"older inequality and the last line follows from H\"older's inequality. The right-hand side of the above is finite for all $p > 1$ sufficiently close to $1$ by \cref{prop: BOUNDS ON THE 1 - P NORM OF THE RADON NIKODYM ESTIMATE}. 
	
	We now turn our attention to estimating the second term in the product found in \eqref{eq: THE SECOND EQUATION IN THE SPATIAL REGULARTY PROPOSITION}. Let $\tau$ be any stopping time such that $0 < \tau \le t$ almost surely and $\xi$ be an $\mathcal{F}_0$-measurable random variable supported on $(0,R]$ for some $R > 0$. Furthermore, let $\tau^n$ ($\xi^n$) denote the discrete approximation to $\tau$ ($\xi$, respectively) defined by
	\begin{align*}
		\tau^n &=\sum_{j = 1}^{2^n} tj2^{-n} \ind_{((j-1)t2^{-n},jt2^{-n}]}(\tau),\\
		\xi^n &= \sum_{l = 1}^{2^n} Rl2^{-n} \ind_{((l-1)R2^{-n},lR2^{-n}]}(\xi).
	\end{align*}
	Then for any open sets $A,B,C \subset \R$,
	\begin{align*}
		&\mathbbm{Q}\left[\Upsilon_{\tau^n}(-\xi^n) + \tilde{B}_t - \tilde{B}_{\tau^n} \in A, \tau^n \in B, \xi^n \in C\right]\\
		& = \sum_{j,l = 1}^{2^n} \mathbbm{Q}\left[\Upsilon_{\tau^n}(-\xi^n) + \tilde{B}_t - \tilde{B}_{\tau^n} \in A, \tau^n = tj2^{-n}, \xi^n = Rl2^{-n} \right] \ind_B(tj2^{-n}) \ind_C(Rl2^{-n})\\
		& = \sum_{j,l = 1}^{2^n} \mathbbm{Q}\left[\Upsilon_{\tau^n}(-\xi^n) + \tilde{B}_t - \tilde{B}_{\tau^n} \in A \right] \ind_B(tj2^{-n}) \ind_C(Rl2^{-n})\mathbbm{Q}[\tau^n = tj2^{-n}, \xi^n = Rl2^{-n}]\\
		& = \int_{\R^2} \mathbbm{Q}\left[\Upsilon_{s}(-x) + \tilde{B}_t - \tilde{B}_{s} \in A \right] \ind_B(s) \ind_C(x)\mathbbm{Q}\left[\tau^n \in \diff s, \xi^n \in \diff x\right]\\
		& = \int_{\Omega} \mathbbm{Q}\left[\Upsilon_{\tau^n(\omega)}(-\xi^n(\omega)) + \tilde{B}_t - \tilde{B}_{\tau^n(\omega)} \in A \right] \ind_B(\tau^n(\omega)) \ind_C(\xi^n(\omega))\diff\mathbbm{Q}\left[\omega\right] 
	\end{align*}
	Therefore, by first taking limits as $n \to  \infty$ and then applying a monotone class theorem argument, we have
	\begin{equation}\label{eq: THE THIRD EQUATION IN THE SPATIAL REGULARITY PROPOSITION}
		\mathbbm{Q}\left[\Upsilon_{\tau}(-\xi) + \tilde{B}_t - \tilde{B}_{\tau} \in A \mid \tau, \xi \right] = \mathbbm{Q}\left[\Upsilon_{\tau(\cdot)}(-\xi(\cdot)) + \tilde{B}_t - \tilde{B}_{\tau(\cdot)} \in A\right]
	\end{equation}
	for all measurable sets $A$. Girsanov's theorem, \citep[Chapter~3,Theorem~5.1]{karatzas2012brownian}, gives us that $\tilde{B}$ is a Brownian motion with respect to the same filtration as our original probability space. As $\xi_k^i$ and $\varsigma_k^i$ are $\mathcal{F}_0$-measurable and $\tau_k^i + \varsigma_k^i$ is a stopping time, returning to the second term in \eqref{eq: THE SECOND EQUATION IN THE SPATIAL REGULARTY PROPOSITION} and employing \eqref{eq: THE THIRD EQUATION IN THE SPATIAL REGULARITY PROPOSITION}, we have
	\begin{align*}
		&\E^\prob\left[\E^{\mathbbm{Q}}\left[\ind_A \mid \tau_k^i + \varsigma_k^i, \xi_k^i\right]\right] \\
		&\hspace{2cm}= \E^{\prob}\left[\mathbbm{Q}\left[\Upsilon_{\tau_k^i + \varsigma_k^i}(-\xi_k^i) + \tilde{B}_t - \tilde{B}_{\tau_k^i + \varsigma_k^i} \in \Upsilon_t(S), \tau_k^i + \varsigma_k^i\le t\mid \tau_k^i + \varsigma_k^i, \xi_k^i\right]\right]\\
		&\hspace{2cm}= \int_{[0,t]\times \R} \mathbbm{Q}\left[\Upsilon_{s}(-x) + \tilde{B}_t - \tilde{B}_{s} \in \Upsilon_t(S)\right]\prob\left[\tau_k^i + \varsigma_k^i \in \diff s, \xi_k^i \in \diff x \right] \\
		&\hspace{2cm}\le C_\sigma(2\pi)^{-1/2}\operatorname{Leb}(S) \int_0^t(t- s)^{-1/2}\prob\left[\tau_k^i + \varsigma_k^i \in \diff s \right] \\
		&\hspace{2cm}= C_\sigma(2\pi)^{-1/2}\operatorname{Leb}(S) \int_0^t\int_0^s(t- s)^{-1/2} \pref(s-u)\prob\left[\tau_k^i \in \diff u \right] \diff s 
	\end{align*}
	where the third line follows from $\mathbbm{Q}[\Upsilon_{s}(-x) + \tilde{B}_t - \tilde{B}_{s} \in \Upsilon_t(S)] \le C_\sigma(2\pi)^{-1/2}(t- s)^{-1/2}\operatorname{Leb}(S)$ and the last line is due to the independence between $\tau_k^i$ and $\varsigma_k^i$. Lastly, by first applying Fubini's Theorem followed by using the fact that $\pref$ is continuous, we compute
	\begin{align*}
		&\E^\prob\left[\E^{\mathbbm{Q}}\left[\ind_A \mid \tau_k^i + \varsigma_k^i, \xi_k^i\right]\right] \\
		&\hspace{2cm}\le C_\sigma(2\pi)^{-1/2}\operatorname{Leb}(S) \int_0^t\int_u^t(t- s)^{-1/2} \pref(s-u) \diff s \prob\left[\tau_k^i \in \diff u \right] \\
		&\hspace{2cm}\le C_\sigma (2\pi)^{-1/2}\norm{\pref}_\infty\operatorname{Leb}(S) \int_0^t\int_u^t(t- s)^{-1/2} \diff s \prob\left[\tau_k^i \in \diff u \right] \\
		&\hspace{2cm}= 2C_\sigma(2\pi)^{-1/2}\norm{\pref}_\infty\operatorname{Leb}(S) \int_0^t(t- u)^{1/2} \prob\left[\tau_k^i \in \diff u \right] \\
		&\hspace{2cm}\le 2C_\sigma(2\pi)^{-1/2}\norm{\pref}_\infty t^{1/2}\prob\left[\tau_k^i \le t\right]\operatorname{Leb}(S). 
	\end{align*}
	Returning to \eqref{eq: THE SECOND EQUATION IN THE SPATIAL REGULARTY PROPOSITION}, and increasing $C_\sigma$ when necessary, we have
	\begin{equation}\label{eq: THE FOURTH EQUATION IN THE SPATILA REGULARITY PROPOSITION}
		\begin{aligned}
			&\prob\left[\Upsilon_t(X_t^i) \in \Upsilon_t(S), \tau_k^i + \varsigma_k^i\le t < \tau_{k +1}^i\right] \\
			&\hspace{3cm}\le C_{\sigma,\delta}\norm{\pref}^\delta_\infty t^{\delta/2}\E^\prob\left[\mathcal{E}_t^{1-p}\right]^{\frac{1}{p}}\prob\left[\tau_k^i \le t\right]^\delta\operatorname{Leb}(S)^{\delta},
		\end{aligned}
	\end{equation}
	where $\delta = (p-1)p^{-1}$. Combining the upper bounds in \eqref{eq: THE FRIST UPPERBOUND IN THE SPATIAL REGULARIY LEMMA} and \eqref{eq: THE FOURTH EQUATION IN THE SPATILA REGULARITY PROPOSITION} into \eqref{eq: THE SUM DECOMPOSITION FOR XTI IN THE SPATIAL REGULARITY PROPOSITION}, we obtain
	\begin{align*}
		\prob\left[X_t^i \in S, X_t^i < 0\right] \le& C_{\sigma,\delta}\E^\prob\left[\mathcal{E}_t^{1-p}\right]^{\frac{1}{p}}t^{-\frac{\delta}{2}}\operatorname{Leb}(S)^\delta\\
		&+ \sum_{k \ge 1} C_{\sigma,\delta}\norm{\pref}^\delta_\infty t^{\delta/2}\E^\prob\left[\mathcal{E}_t^{1-p}\right]^{\frac{1}{p}}\prob\left[\tau_k^i \le t\right]^\delta\operatorname{Leb}(S)^{\delta}\\
		\le& C_{\sigma,\delta,T,\pref} t^{-\frac{\delta}{2}}\E^\prob\left[\mathcal{E}_t^{1-p}\right]^{\frac{1}{p}}\left[1 + \sum_{k \ge 1}\prob\left[\tau_k^i \le t\right]^\delta\right]\operatorname{Leb}(S)^{\delta}.
	\end{align*}
	The right-hand side of the above is finite for all $p > 1$ sufficiently close to $1$ by \cref{prop: BOUNDS ON THE 1 - P NORM OF THE RADON NIKODYM ESTIMATE}.
\end{proof}

The above is a rough bound that in particular ensures
\[
\mathbb{E}[\nu^N_t(S)] \leq C t^{-\frac{\delta}{2}}\text{Leb}(S)^\delta
\]
for all $t\in(0,T]$ and all $S\in\mathcal{B}(\mathbb{R})$, for some $\delta\in (0,1)$ and $C>0$ that is uniform in $N\geq 1$. Near the origin, we expect much better control, due to absorption. This is quantified by the next result.

\begin{proposition}[Boundary decay]\label{prop: BOUNDARY DECAY OF THE EMPIRICAL MEAUSRES IN EXPECTATION}
	There exists a $\delta \in (0,1]$ and $\beta > 0$ such that uniformly in $N$ and $t \in (0,T]$, we have 
	\begin{equation*}
		\E\left[\nu_t^N(-\varepsilon,0)\right] = t^{-\delta/2}O(\varepsilon^{ 1 + \beta})\quad \text{as}\quad \varepsilon \to  0.
	\end{equation*}
\end{proposition}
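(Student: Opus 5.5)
By exchangeability it suffices to bound $\prob[X_t^i\in(-\varepsilon,0),\,X_t^i<0]$ uniformly in $i$ and $N$, since $\E[\nu_t^N(-\varepsilon,0)]$ is the average of these probabilities. I would decompose exactly as in \eqref{eq: THE SUM DECOMPOSITION FOR XTI IN THE SPATIAL REGULARITY PROPOSITION}: one term on the event $\{t<\tau_1^i\}$, and for each $k\ge1$ one term on $\{\tau_k^i+\varsigma_k^i\le t<\tau_{k+1}^i\}$. I would then pass to the scale-transformed process $\Upsilon_t(X_t^i)$ from \cref{prop: SCALE TRANSFORM LEMMA }. By \eqref{eq: BOUND ON MATHFRAK Z WHEN CHANGING THE MEASURE}, the set $\Upsilon_t((-\varepsilon,0))$ is contained in $(-C\varepsilon,0)$.

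The key difference from \cref{prop: PROPOSTION SHOWING THAT WE HAVE AN UPPER BOUND ON THE PROBABILITYS OF XTI TAKING VALUES IN SOME SET} is that I keep the absorption. On the event $\{t<\tau_1^i\}$, the path $\Upsilon_0(X_0^i)+\tilde B_s$ stays strictly negative on $[0,t]$. So I would bound this term by
\[
\prob\bigl[\Upsilon_0(X_0^i)+\tilde B_t\in(-C\varepsilon,0),\ \sup_{s\le t}(\Upsilon_0(X_0^i)+\tilde B_s)<0\bigr].
\]
Applying Hölder with $\E[\mathcal{E}_t^{1-p}]$ via \cref{prop: BOUNDS ON THE 1 - P NORM OF THE RADON NIKODYM ESTIMATE}, it becomes a $\mathbbm{Q}$-probability, raised to the power $\delta=(p-1)/p$, for a Brownian motion killed at $0$. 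That probability is computed with the reflection-principle density $(2\pi t)^{-1/2}\bigl(e^{-(y-x)^2/2t}-e^{-(y+x)^2/2t}\bigr)$ for $x,y<0$. For $|y|\le C\varepsilon$, this density is bounded by $C|y|\,|x|\,t^{-3/2}e^{-(|x|-|y|)^2/2t}$, and also trivially by $Ct^{-1/2}$. Integrating over $y\in(-C\varepsilon,0)$ gives $\mathbbm{Q}[\cdot]\le C\min(\varepsilon^2 t^{-1}\sup_a a\,t^{-1/2}e^{-a^2/4t},\ \varepsilon t^{-1/2})\le C\varepsilon^2 t^{-1}$, after integrating against $\nu_0$ (using $\sup_a a e^{-a^2/4t}\le C\sqrt t$). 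Interpolating between the two bounds gives $\varepsilon^{2\theta+(1-\theta)}t^{-(1+\theta)/2}$ for $\theta\in[0,1]$. After the Hölder power this is $\varepsilon^{\delta(1+\theta)}$. For the reset terms I would use the same conditioning argument as before, with $\tau_k^i+\varsigma_k^i$ and $\xi_k^i$ frozen, and the same killed heat kernel started at $\Upsilon_s(-\xi_k^i)\le-\gamma C_\sigma^{-1}$. Because the starting point is bounded away from $0$ by $\gamma C_\sigma^{-1}$, the killed density is here even $O(\varepsilon^2)$ uniformly in $s$ after integrating in $s$ against the bounded density $\pref$. Summing in $k$ with weights $\prob[\tau_k^i\le t]^\delta$ converges by \cref{prop: EXPONENTIAL DECAY OF THE HITTING PROBABILITIES}.

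The main obstacle is the Hölder loss. The change of measure only allows $p$ close to $1$, so $\delta=(p-1)/p$ is small, and the exponent $\delta\cdot 2$ is far below $1$. Estimating the whole $\mathbbm{Q}$-probability and then raising it to the power $\delta$ therefore cannot yield $\varepsilon^{1+\beta}$. My first attempt would be to avoid taking the Hölder power of the whole probability. I would apply Hölder on each dyadic shell $\{\Upsilon_t(X_t^i)\in(-2^{-m},-2^{-m-1}]\}$, or equivalently write $\E[\nu_t^N(-\varepsilon,0)]=\int_{-\varepsilon}^0\E[\text{density}]$. This would use the bound $\mathbbm{Q}$-density$(y)\le C|y|t^{-1}$ together with a uniform-in-$N$ density bound under $\prob$ of order $|y|^{\delta}$. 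Alternatively, I would remove the drift with a Girsanov density whose $L^{p'}(\prob)$ norm, with $p'$ large, is controlled only on the event $\{\hat\Lambda_t^i\le M\}$. The complement $\{\hat\Lambda_t^i>M\}$ would be handled by sub-Gaussianity (\cref{cor: SUBGUASSIANTY COROLLARY}), taking $M\sim\sqrt{\log(1/\varepsilon)}$. On $\{\hat\Lambda_t^i\le M\}$ the drift is bounded, so Novikov gives $L^{q}$ bounds on $\mathcal{E}_t^{-1}$ for any $q$, of size $e^{CqM^2}$. This yields $\varepsilon^{2(1-1/q)}e^{CM^2}+e^{-cM^2}$. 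Choosing $M^2=\eta\log(1/\varepsilon)$ with $\eta$ small then gives $\varepsilon^{1+\beta}$ for some $\beta>0$, with the factor $t^{-\delta/2}$ coming from the same interpolation in $\theta$.
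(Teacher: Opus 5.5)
You correctly see that applying H\"older to the whole probability only yields $\varepsilon^{2\delta}$ with $\delta=(p-1)/p$ small. However, neither of your two remedies removes this loss, so there is a genuine gap.

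\emph{Dyadic shells.} H\"older still raises each shell probability to the power $\delta$, so $\prob[\text{shell}_m]\lesssim(2^{-2m}t^{-1})^{\delta}$. Summing over $2^{-m}\le\varepsilon$ returns $\varepsilon^{2\delta}$. The density variant presupposes a uniform pointwise $\prob$-density bound of order $|y|^\delta$ near $0$. That is stronger than the proposition itself, and H\"older cannot produce it: it transfers probabilities of sets with a power loss and gives nothing pointwise.

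\emph{Localised Girsanov.} With $M^2=\eta\log(1/\varepsilon)$, the complement contributes $e^{-cM^2}=\varepsilon^{c\eta}$, which for small $\eta$ is far larger than $\varepsilon^{1+\beta}$. Making it $O(\varepsilon^{1+\beta})$ forces $c\eta>1$. The Girsanov cost $e^{CM^2}=\varepsilon^{-C\eta}$ must then still be beaten by $\varepsilon^{2(1-1/q)}$, which needs roughly $C/c<1-2/q$. The available sub-Gaussian constant is of order $\min\{\gamma,(C_\sigma^2T)^{-1}\}$, while $C$ grows with $q$, $T$ and the drift constants. Nothing in the assumptions makes this inequality hold.

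\emph{The missing idea} is never to change measure on the leading term. The paper sets $u(s,x)=\int_{S_t}G_{t-s}(y,x)\,\diff y$, with $G$ the Dirichlet heat kernel on $(-\infty,0)$, and applies It\^o's formula with jumps to $u(s,\hat X_s)$ under $\prob$. This splits $\prob[\hat X_t\in S_t]$ into three pieces.
\begin{enumerate}[(i)]
\item The initial term $\E[u(0,\hat X_0)]$ is a killed-Brownian probability computed directly under $\prob$. It is $O(\varepsilon^2)$ up to a power of $t$.
\item The reset terms $\E[u(\tau_k^i+\varsigma_k^i,\Upsilon_{\tau_k^i+\varsigma_k^i}(-\xi_k^i))]$ are also computed under $\prob$, from the laws of $\tau_k^i+\varsigma_k^i$ and $\xi_k^i$. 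They give $O(\varepsilon^2)\sum_k\prob[\tau_k^i\le T]$.
\item The drift correction $\int_0^{t}\E[\hat b_s^i\,\partial_xu(s,\hat X_s)]\,\diff s$ is the only place Girsanov is used.
\end{enumerate}
In (iii), H\"older is applied pointwise in $y$, inside the $\diff y$-integral over $S_t$, to the smooth kernel $\partial_xG_{t-s}(y,\cdot)$. This is possible because the Duhamel representation has replaced the terminal indicator by a time-integrated kernel. Consequently $\operatorname{Leb}(S_t)\approx\varepsilon$ stays outside the H\"older exponent, and only the boundary factor $\tilde y^{1/a}$ (with $a=pq/(p-1)$) incurs the loss. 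This yields $\varepsilon^{1+1/a}$.
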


The estimate will follow from controlling $\prob(X_t^i \in (-\varepsilon, 0),\, X_t^i < 0)$, uniformly in $i$. The strategy is as follows. By the definition of the scale transform in \cref{prop: SCALE TRANSFORM LEMMA }, the function $x \mapsto \Upsilon_t(x)$ is a bijection for any fixed $t$. In addition, $\Upsilon_t(x) \le 0$ if and only if $x \le 0$. Consequently, we have
\begin{equation*}
	\prob\left[X_t^i \in S,\, X_t^i < 0\right] = \prob\left[\Hat{X}_t \in S_t,\, \Hat{X}_t < 0\right],
\end{equation*}
where $\Hat{X}_t = \Upsilon_t(X_t^i)$ and $S_t = \Upsilon_t(S)$. By applying a change of measure, we may view $\Hat{X}$ as a neuronal particle driven solely by a standard Brownian motion. This particle is paused at $0$ for a random duration $\varsigma_k^i$ after reaching $0$ for the $k^{\textnormal{th}}$ time. Subsequently, it is then reset to a random value in the negative half-line after this waiting period.

We then approximate $\Hat{X}$ by allowing it to evolve up to time $s$, followed by running an independent absorbed Brownian motion for time $t - s$. By employing the bounds on the density of an absorbed Brownian motion, we may estimate the probability that this approximate particle is in $S_t$. Lastly, by sending $s$ towards $t$ from below, we may obtain an estimate on the probability that $\Hat{X}$ is in $S_t$. This strategy mirrors the approach in \citep[Section~6.3]{hambly2019spde}.

\begin{proof}
	By \cref{prop: SCALE TRANSFORM LEMMA },
	\begin{align}
		\notag \diff\Hat{X}_t &= \hat{b}_t^i \ind_{\{\Hat{X}_t < 0\}}\diff{t} + \ind_{\{\Hat{X}_t < 0\}} \diff{W_t^i} + \diff\sum_{k \ge 1} \Upsilon_{\tau_k^i + \varsigma_k^i}(-\xi_k^i)\ind_{[0,t]}(\tau_k^i + \varsigma_k^i), \\
		\label{eq: EQUATION OF THE NEURONAL PARTICAL DRIVEN BY A BROWNIAN MOTION AND RANDOM RESETS}&= \ind_{\{\Hat{X}_t < 0\}} \diff \tilde{B}_t + \diff\sum_{k \ge 1} \Upsilon_{\tau_k^i + \varsigma_k^i}(-\xi_k^i)\ind_{[0,t]}(\tau_k^i + \varsigma_k^i)
	\end{align}
	where $\tilde{B}$ is a $\mathbb{Q}$-Brownian motion. Take $S:=(-\varepsilon ,0)$, for an arbitrary $\varepsilon>0$. As in the outline of our proof strategy above, we set $S_t:= \Upsilon_t(S)$ and $\hat{X}_t:=\Upsilon_t(X_t^i)$. Consider a given interval $[0,t]$ and define
	\begin{equation*}
		u(s,x) = \int_{S_t} G_{t - s}(y,x) \diff y
	\end{equation*}
	for $s \in [0,t]$, where $G_t(y,x) = p_t(x-y) - p_t(x+y)$, with $p_t(x) = (2\pi t)^{-1/2}e^{-x^2/2t}$, is the Dirichlet heat kernel on the domain $(-\infty,0)$. We remark that $u$ is a classical solution to the terminal-boundary value problem
	\begin{equation}\label{eq: PDE THAT U SOLVES}
		\begin{cases}
			\begin{aligned}
				&\partial_s u(s,x)  + \frac{1}{2} \Delta u(s,x) = 0 &\quad\textnormal{on}\quad &[0,t) \times (-\infty,0),\\
				&u(t,x) = \ind_{S_t}(x) &\quad\textnormal{on}\quad &\{t\} \times (-\infty,0),\\
				&u(s,0) = 0 &\quad\textnormal{on}\quad &[0,t) \times \{0\}.
			\end{aligned}
		\end{cases}
	\end{equation}
	
	Now, fix any $t_0 < t$ and let
	\begin{equation*}
		v(s) \coloneqq \E \left[u(s,\Hat{X}_s)\right].
	\end{equation*}
	Applying It\^o's formula with jumps to $u(s,\Hat{X}_s)$ and utilizing \eqref{eq: PDE THAT U SOLVES}, we obtain
	\begin{equation}\label{eq:THE DECOMPOSITION WE WISH TO CONTROL IN THE BOUNDARY ESTIMATES}
		v(t_0) = v(0) + \int_0^{t_0} \E\left[\hat{b}_s^i\ind_{\{\Hat{X}_s < 0\}}\partial_x u(s,\Hat{X}_s)\right] \diff s + \E \left[\sum_{s \le t_0} u(s,\Hat{X}_s) - u(s,\Hat{X}_{s-})\right].
	\end{equation}
	The underlying idea is that by taking the limit $t_0 \uparrow t$, the term on the left-hand side converges to $\prob[\Hat{X}_t \in S_t]$, which is our quantity of interest. Thus, we need to obtain control of the three terms on the right-hand side. The first term equates to $\prob[\Upsilon_0(X_0^i) + B_t \in S_t,\, t < \tau^B]$ for some standard Brownian motion $B$ independent of $X_0^i$, where $\tau^B \coloneqq \inf\{s \ge 0\,:\, \Upsilon_0(X_0^i) + B_s \ge 0\}$. As $S_t \approx (-\varepsilon,0)$, it follows that
	\begin{equation}\label{eq:BOUND ON THE FIRST TIME IN THE DECOMPOSITION THAT WE CARE ABOUT}
		\prob[\Upsilon_0(X_0^i) + B_t \in S_t,\, t < \tau^B] = O(t^{-1/2}\varepsilon^2)    
	\end{equation}
	uniformly in $i$.
	
	To estimate the third term on the right-hand side of \eqref{eq:THE DECOMPOSITION WE WISH TO CONTROL IN THE BOUNDARY ESTIMATES}, we begin by noting that if $s$ is a reset time, i.e. $ s = \tau_k^i + \varsigma_k^i$, $\Hat{X}_{s-} = \Upsilon_s (0) = 0$ and $\Hat{X}_s = \Upsilon_{\tau_k^i + \varsigma_k^i} (-\xi_k^i)$. Therefore, we can express
	\begin{equation}\label{eq:REPRESENTATION OF THE THIRD TERM WITH STOPPING TIMES}
		\E \left[\sum_{s \le t_0} u(s,\Hat{X}_s) - u(s,\Hat{X}_{s-})\right] = \sum_{k \ge 1} \E \left[\ind_{t_0 \ge \tau_k^i + \varsigma_k^i} u(\tau_k^i + \varsigma_k^i,\Upsilon_{\tau_k^i + \varsigma_k^i}(-\xi_k^i))\right].
	\end{equation}
	Proceeding, we find that
	\begin{align*}
		&\E \left[\ind_{t_0 \ge \tau_k^i + \varsigma_k^i} u(\tau_k^i + \varsigma_k^i,\Upsilon_{\tau_k^i + \varsigma_k^i}(-\xi_k^i))\right]\\
		&\qquad= \int_{\gamma}^R \int_0^{t_0}\int_{S_t} G_{t - s}(y,\Upsilon_s(-x)) \diff y \prob[\tau_k^i + \varsigma_k^i \in \diff s] \prob[\xi_k^i \in \diff x] \\ \intertext{Applying the change of variables $\tilde{y} = - y$ and using the fact that $G_t(-y,-x) = G_t(y,x)$ for any $x,y$,}
		&\qquad= \int_{\gamma}^R \int_0^{t_0}\int_{-S_t} G_{t - s}(\tilde{y},-\Upsilon_s(-x)) \diff \tilde{y} \prob[\tau_k^i + \varsigma_k^i \in \diff s] \prob[\xi_k^i \in \diff x] \\ \intertext{Using the change of variables $\tilde{y} = -\Upsilon_t(-\tilde{x})$,}
		&\qquad= \int_{\gamma}^R \int_0^{t_0}\int_0^\varepsilon G_{t - s}(-\Upsilon_t(-\tilde{x}),-\Upsilon_s(-x)) \Upsilon_t^\prime(-\tilde{x})\diff \tilde{x} \prob[\tau_k^i + \varsigma_k^i \in \diff s] \prob[\xi_k^i \in \diff x].
	\end{align*}
	
	By \citep[Lemma~2.2.1]{sojmark_2019}, for any $x,y \ge 0$, we have
	\begin{equation}\label{eq:BOUND ON THE DENSITY OF KILLED BM FROM AS THESIS}
		G_t(x,y) \le \frac{C}{\sqrt{t}}\left(\frac{x}{\sqrt{t}} \wedge 1\right) \left(\frac{y}{\sqrt{t}} \wedge 1\right) e^{-\frac{(x-y)^2}{4t}},
	\end{equation}
	for some constant $C > 0$. Furthermore, by definition of $\Upsilon_t$, we may find $c,C > 0$ such that
	\begin{equation}\label{eq:GENERAL BOUNDS ON THE SCALE TRANSFORM}
		\nnnorm{\Upsilon_t(-\tilde{x})} \le C \nnnorm{\tilde{x}}, \qquad \nnnorm{\Upsilon_t(-\tilde{x}) - \Upsilon_t(-x)} \ge c(\gamma - \varepsilon), \qquad \nnnorm{\Upsilon_t^\prime(-\tilde{x})} \le C,
	\end{equation}
	for all $\tilde{x} \in (0, \varepsilon)$ and $x \in [\gamma,R]$. Employing \eqref{eq:BOUND ON THE DENSITY OF KILLED BM FROM AS THESIS} and \eqref{eq:GENERAL BOUNDS ON THE SCALE TRANSFORM},
	\begin{align}
		\notag \E \left[\ind_{t_0 \ge \tau_k^i + \varsigma_k^i} u(\tau_k^i + \varsigma_k^i,\Upsilon_{\tau_k^i + \varsigma_k^i}(-\xi_k^i))\right] &\lesssim \int_0^{t_0}\int_0^\varepsilon \frac{\tilde{x}}{t-s} e^{-\frac{c(\gamma - \varepsilon)^2}{4(t-s)}} \diff \tilde{x} \prob[\tau_k^i + \varsigma_k^i \in \diff s]\\
		&\lesssim (\gamma - \varepsilon)^{-2} \int_0^{t_0}\int_0^\varepsilon {\tilde{x}}\diff \tilde{x} \prob[\tau_k^i + \varsigma_k^i \in \diff s] \label{eq: BOUND ON ONE OF THE ELEMENTS IN THE INFINITE SUM}
	\end{align}
	where the final line follows from the fact that for any fixed $z > 0$, the function $t \mapsto t^{-1}e^{-z^2/4t}$ is bounded by $4e^{-1}z^{-2}$. Substituting \eqref{eq: BOUND ON ONE OF THE ELEMENTS IN THE INFINITE SUM} into \eqref{eq:REPRESENTATION OF THE THIRD TERM WITH STOPPING TIMES},
	\begin{equation}\label{eq:BOUND ON THE THIRD TIME IN THE DECOMPOSITION THAT WE CARE ABOUT}
		\E \left[\sum_{s \le t_0} u(s,\Hat{X}_s) - u(s,\Hat{X}_{s-})\right]  \le C \varepsilon^2 \sum_{k \ge 1} \prob[\tau_k^i \le T],
	\end{equation}
	for all $\varepsilon \ll 1$ and some constant $C$ independent of $i$.
	
	Now we turn our attention to the middle term. By definition of $u$,
	\begin{align}
		&\int_0^{t_0}\E\left[\nnorm{\hat{b}_s^i}\ind_{\{\Hat{X}_s < 0\}}\nnorm{\partial_x u(s,\Hat{X}_s)}\right] \diff s  \\
		&\hspace{3cm} \le \int_0^t \int_{S_t} \E\left[\nnorm{\hat{b}_s^i}\ind_{\{\Hat{X}_s < 0\}}\nnorm{\partial_x G_{t-s}(y,\Hat{X}_s)}\right] \diff y \diff s \notag \\
		&\hspace{3cm} \le \int_0^t\int_{S_t} \E\left[\nnorm{\hat{b}_s^i}\ind_{\{s < \hat{\tau}_1\}}\nnorm{\partial_x G_{t-s}(y,\Hat{X}_s)}\right] \diff y \diff s \notag \\
		&\hspace{3cm}+ \sum_{k \ge 1} \int_0^t\int_{S_t} \E\left[\nnorm{\hat{b}_s^i}\ind_{\{\hat{\tau}_k + \varsigma_k^i \le s < \hat{\tau}_{k+1}\}}\nnorm{\partial_x G_{t-s}(y,\Hat{X}_s)}\right] \diff y \diff s \notag \\
		&\hspace{3cm}= I + \sum_{k \ge 1} I_k, \label{eq: DECOMPOSITION OF THE MIDDLE TERM INTO AN INFINITE SUM OF INTEGRALS}
	\end{align}
	where $\hat{\tau}_k = \inf\{ t > \hat{\tau}_{k-1} + \varsigma_{k-1}^i\,:\, \hat{X}_{t-} \ge 0\}$ with $\hat{\tau}_0 = \varsigma_0^i = 0$. To begin with $I$, by applying the tower property and Fubini's Theorem, we have
	\begin{equation}\label{eq: APPLICATION OF THE TOWER PROPERTY TO I IN OUR DECOMPOSITION OF THE MIDDLE TERM}
		I = \E \int_0^t\int_{S_t} \E\left[\left.\nnorm{\hat{b}_s^i}\ind_{\{s < \hat{\tau}_1\}}\nnorm{\partial_x G_{t-s}(y,\Hat{X}_s)}\right|X_0^i\right] \diff y \diff s.
	\end{equation}
	Now for any $p,q > 1$, we may apply H\"older's inequality to the conditional expectation in \eqref{eq: APPLICATION OF THE TOWER PROPERTY TO I IN OUR DECOMPOSITION OF THE MIDDLE TERM}. This yields
	\begin{equation}\label{eq: APPLICATION OF HOLDERS INEQUALITY TO I IN OUR DECOMPOSITION OF THE MIDDLE TERM}
		\begin{aligned}
			I \le \E \int_0^t\int_{S_t} &\E\left[\left.\nnorm{\hat{b}_s^i}^{\frac{q}{q-1}}\right|X_0^i\right]^{\frac{q-1}{q}} \E\left[\left.\mathcal{E}_T^{1-p}\right|X_0^i\right]^{\frac{1}{pq}} \\
			&\times\E^{\mathbb{Q}}\left[\left.\nnorm{\partial_x G_{t-s}(y,\Hat{X}_s)}^{\frac{pq}{p-1}}\ind_{\{s < \hat{\tau}_1\}}\right|X_0^i\right]^{\frac{p-1}{pq}}\diff y \diff s.
		\end{aligned}
	\end{equation}
	By \cref{prop: SCALE TRANSFORM LEMMA }, we deduce that $\nnorm{\hat{b}_s^i} \le C(1 + \hat{\Lambda}_s^i) \le C(1 + \hat{\Lambda}_T^i) $. Therefore,
	\begin{equation}\label{eq: APPLICATION OF HOLDERS INEQUALITY TO I IN OUR DECOMPOSITION OF THE MIDDLE TERM AND MAKING THE FIRST TWO TERMS INDEPENDENT OF S}
		\begin{aligned}
			I \lesssim \E \int_0^t\int_{S_t} &\E\left[\left.\nnorm{1 + \hat{\Lambda}_T^i}^{\frac{q}{q-1}}\right|X_0^i\right]^{\frac{q-1}{q}} \E\left[\left.\mathcal{E}_T^{1-p}\right|X_0^i\right]^{\frac{1}{pq}} \\
			&\times\E^{\mathbb{Q}}\left[\left.\nnorm{\partial_x G_{t-s}(y,\Hat{X}_s)}^{\frac{pq}{p-1}}\ind_{\{s < \hat{\tau}_1\}}\right|X_0^i\right]^{\frac{p-1}{pq}}\diff y \diff s.
		\end{aligned}
	\end{equation}
	Since the sum of the exponents $(q-1)/q + 1/pq + (p-1)/pq = 1$ and the last term is the only term that depends on $y$ and $s$, applying H\"older's inequality one more time to \eqref{eq: APPLICATION OF HOLDERS INEQUALITY TO I IN OUR DECOMPOSITION OF THE MIDDLE TERM AND MAKING THE FIRST TWO TERMS INDEPENDENT OF S} yields
	\begin{equation}\label{eq: APPLICATION OF HOLDERS INEQUALITY TO I AGAIN IN OUR DECOMPOSITION OF THE MIDDLE TERM}
		\begin{aligned}
			I \lesssim &\E\left[\nnorm{1 + \hat{\Lambda}_T^i}^{\frac{q}{q-1}}\right]^{\frac{q-1}{q}} \E\left[\mathcal{E}_T^{1-p}\right]^{\frac{1}{pq}} \\
			&\times\E\left[ \left(\int_0^t\int_{S_t} \E^{\mathbb{Q}}\left[\left.\nnorm{\partial_x G_{t-s}(y,\Hat{X}_s)}^{a}\ind_{\{s < \hat{\tau}_1\}}\right|X_0^i\right]^{\frac{1}{a}}\diff y \diff s\right)^a\right]^\frac{1}{a},
		\end{aligned}
	\end{equation}
	where $a = pq/(p-1)$. By \cref{cor: SUBGUASSIANTY COROLLARY}, the first term in \eqref{eq: APPLICATION OF HOLDERS INEQUALITY TO I AGAIN IN OUR DECOMPOSITION OF THE MIDDLE TERM} is bounded for all $q$ uniformly in $N$ and the second term is bounded for all $p$ close enough to $1$ by \cref{prop: BOUNDS ON THE 1 - P NORM OF THE RADON NIKODYM ESTIMATE}. Thus, it remains to establish control over the last term in  \eqref{eq: APPLICATION OF HOLDERS INEQUALITY TO I AGAIN IN OUR DECOMPOSITION OF THE MIDDLE TERM}. 
	
	To this end, we shall employ the techniques in \citep[Chapter~2]{sojmark_2019}. For any $x_0 < 0$, we have $\mathbb{Q}[\Hat{X}_s \in \cdot,\, s < \hat{\tau}_1\mid X_0^i = x_0] = \mathbb{Q}[\Upsilon_0(x_0) + \tilde{B}_s\in \cdot,\, s < \hat{\tau}_1 \mid X_0^i = x_0]$. This is the law of a stopped Brownian Motion starting at $\Upsilon_0(x_0)$. Therefore,
	\begin{equation}\label{eq: CONTROL ON THE MIDDLE TERM ONE}
		\begin{aligned}
			&\int_{S_t} \E^{\mathbb{Q}}\left[\left.\nnorm{\partial_x G_{t-s}(y,\Hat{X}_s)}^{a}\ind_{\{s < \hat{\tau}_1\}}\right|X_0^i = x_0\right]^{\frac{1}{a}}\diff y\\
			&\hspace{4cm} = \int_{S_t} \left(\int_{-\infty}^0 \nnorm{\partial_x G_{t-s}(y,x)}^{a}G_s(z,x)\diff x\right)^{\frac{1}{a}} \diff y,
		\end{aligned}
	\end{equation}
	where $z = \Upsilon_0(x_0)$. By applying the change of variables $\tilde{x} = -x$ and $ \tilde{y} = -y$, and considering the expression for $G_t(y,x)$, we get
	\begin{equation}\label{eq: CONTROL ON THE MIDDLE TERM TWO}
		\begin{aligned}
			&\int_{S_t} \E^{\mathbb{Q}}\left[\left.\nnorm{\partial_x G_{t-s}(y,\Hat{X}_s)}^{a}\ind_{\{s < \hat{\tau}_1\}}\right|X_0^i = x_0\right]^{\frac{1}{a}}\diff y \\
			&\hspace{4cm}= \int_{-S_t} \left(\int_{0}^{\infty} \nnorm{\partial_{\tilde{x}} G_{t-s}(\tilde{y},\tilde{x})}^{a}G_s(\nnorm{z},\tilde{x})\diff \tilde{x}\right)^{\frac{1}{a}} \diff \tilde{y},
		\end{aligned}
	\end{equation}
	Writing out the expression for $\partial_{\tilde{x}} G_{t-s}$ and $G_s$, and using the upper bound
	\begin{equation*}
		e^{-\frac{(\tilde{x} - \nnorm{z})^2}{2s}} - e^{-\frac{(\tilde{x} + \nnorm{z})^2}{2s}} \le \left(\frac{2\tilde{x}\nnorm{z}}{s}\wedge 1\right)e^{-\frac{(\tilde{x} - \nnorm{z})^2}{2s}},
	\end{equation*}
	we obtain $\int_{S_t} \E^{\mathbb{Q}}\left[\left.\nnorm{\partial_x G_{t-s}(y,\Hat{X}_s)}^{a}\ind_{\{s < \hat{\tau}_1\}}\right|X_0^i = x_0\right]^{\frac{1}{a}}\diff y \lesssim (t - s)^{-1/2}s^{-1/2a} \tilde{I}(s)$, where
	\begin{equation}\label{eq:EXPRESSION OF I TILDE}
		\tilde{I}(s) \coloneqq \int_{-S_t} \left(\int_{0}^{\infty} \nnnorm{\frac{(\tilde{y} - \tilde{x})}{t - s}e^{-\frac{(\tilde{y} - \tilde{x})^2}{2(t - s)}} + \frac{(\tilde{y} + \tilde{x})}{t - s}e^{-\frac{(\tilde{y} + \tilde{x})^2}{2(t - s)}}}^{a}\frac{\tilde{x}\nnorm{z}}{s}e^{-\frac{(\tilde{x} - \nnorm{z})^2}{2s}}\diff \tilde{x}\right)^{\frac{1}{a}} \diff \tilde{y}
	\end{equation}
	
	Now \eqref{eq:EXPRESSION OF I TILDE} mirrors the form of $ \tilde{I}(s)$ as delineated in the proof of \citep[Proposition~2.4.3]{sojmark_2019}. Therefore, we have the bound
	\begin{equation}\label{eq: CONTROL ON THE MIDDLE TERM THREE}
		\int_0^t\int_{S_t} \E^{\mathbb{Q}}\left[\left.\nnorm{\partial_x G_{t-s}(y,\Hat{X}_s)}^{a}\ind_{\{s < \hat{\tau}_1\}}\right|X_0^i = x_0\right]^{\frac{1}{a}}\diff y \diff s \le C_a \int_{-S_t} t^{-\frac{1}{a}} \nnorm{z}^{\frac{1}{a}} \tilde{y}^{\frac{1}{a}}e^{-\frac{(\tilde{y} - \nnorm{z})^2}{4at}} \diff \tilde{y}.
	\end{equation}
	Recalling $S_t = \Upsilon_t(S)$ and $\Upsilon_0(x_0) = z$, employing the change of variable $\tilde{y} = - \Upsilon_t(-\tilde{x})$ and the upper bounds in \eqref{eq:GENERAL BOUNDS ON THE SCALE TRANSFORM} which hold for all $\tilde{x}$, we have
	\begin{align*}
		\int_0^t\int_{S_t} \E^{\mathbb{Q}}\left[\left.\nnorm{\partial_x G_{t-s}(y,\Hat{X}_s)}^{a}\ind_{\{s < \hat{\tau}_1\}}\right|X_0^i = x_0\right]^{\frac{1}{a}}\diff y \diff s 
		&\le C_{a,\sigma} \int_0^\varepsilon t^{-\frac{1}{a}} \nnorm{x_0}^{\frac{1}{a}} \tilde{x}^{\frac{1}{a}} \diff \tilde{x} \\
		&= C_{a,\sigma} t^{-\frac{1}{a}}\nnorm{x_0}^{\frac{1}{a}} \varepsilon^{1 + \frac{1}{a}}.
	\end{align*}
	Returning to \eqref{eq: APPLICATION OF HOLDERS INEQUALITY TO I AGAIN IN OUR DECOMPOSITION OF THE MIDDLE TERM}, using the upper bound above,
	\begin{equation}\label{eq:BOUND ON THE I TERM IN THE DECOMPOSITION OF THE }
		I \le C_{a,p,q,\sigma} t^{-\frac{1}{a}}\varepsilon^{1 + \frac{1}{a}}\E\left[\nnorm{X_0}\right]^{\frac{1}{a}}
	\end{equation}
	
	Now considering the term $I_k$ in \eqref{eq: DECOMPOSITION OF THE MIDDLE TERM INTO AN INFINITE SUM OF INTEGRALS}, performing analogous computations as in \eqref{eq: APPLICATION OF THE TOWER PROPERTY TO I IN OUR DECOMPOSITION OF THE MIDDLE TERM} - \eqref{eq: APPLICATION OF HOLDERS INEQUALITY TO I AGAIN IN OUR DECOMPOSITION OF THE MIDDLE TERM} but conditioning on $\hat{\tau}_k^i + \varsigma_k^i$ and $\xi_k^i$, we obtain
	\begin{equation}\label{eq: APPLICATION OF FUBINI AND HOLDER TO GET A PRELIMINARY UPPERBOUND ON IK}
		I_k \lesssim C_{p,q} \E\left[ \left(\int_0^t\int_{S_t} \E^{\mathbb{Q}}\left[\left.\nnorm{\partial_x G_{t-s}(y,\Hat{X}_s)}^{a}\ind_{\{\hat{\tau}_k + \varsigma_k^i \le s < \hat{\tau}_{k+1}\}}\right|\hat{\tau}_k^i + \varsigma_k^i,\xi_k^i\right]^{\frac{1}{a}}\diff y \diff s\right)^a\right]^\frac{1}{a},
	\end{equation}
	
	For any $r < s < t$ and $x_0 \in [\gamma, R]$,\,$\mathbb{Q}[\Hat{X}_s \in \cdot, s < \Hat{\tau}_{k+1}\mid \hat{\tau}_k^i + \varsigma_k^i = r, \xi_k^i = x_0] = \mathbb{Q}[\Upsilon_r(-x_0) + \tilde{B}_s - \tilde{B}_r \in \cdot, s < \Hat{\tau}_{k+1}]$. This is the law of a stopped Brownian motion starting at $\Upsilon_r(-x_0)$ at time $s - r$. Therefore,
	\begin{equation}\label{eq: CONTROL ON THE MIDDLE TERM FOUR}
		\begin{split}
			&\int_0^t\int_{S_t} \E^{\mathbb{Q}}\left[\left.\nnorm{\partial_x G_{t-s}(y,\Hat{X}_s)}^{a}\ind_{\{\hat{\tau}_k + \varsigma_k^i \le s < \hat{\tau}_{k+1}\}}\right|\hat{\tau}_k^i + \varsigma_k^i = r, \xi_k^i = x_0\right]^{\frac{1}{a}}\diff y \diff s\\
			&\qquad \qquad \qquad \qquad \qquad \qquad  = \int_r^t\int_{S_t} \left(\int_{-\infty}^0 \nnorm{\partial_x G_{t-s}(y,x)}^{a}G_{s-r}(z,x)\diff x\right)^{\frac{1}{a}} \diff y \diff s\\
			&\qquad \qquad \qquad \qquad \qquad \qquad  = \int_0^{\tilde{t}}\int_{S_t} \left(\int_{-\infty}^0 \nnorm{\partial_x G_{\tilde{t}-\tilde{s}}(y,x)}^{a}G_{\tilde{s}}(z,x)\diff x\right)^{\frac{1}{a}} \diff y \diff \tilde{s}
		\end{split}
	\end{equation}
	with $z = \Upsilon_r(-x_0),\, \tilde{t} = t - r$ and $\tilde{s} = s - r$. \eqref{eq: CONTROL ON THE MIDDLE TERM FOUR} is the exact same equation as in \eqref{eq: CONTROL ON THE MIDDLE TERM ONE}. Therefore, repeating the calculations in \eqref{eq: CONTROL ON THE MIDDLE TERM ONE} - \eqref{eq: CONTROL ON THE MIDDLE TERM THREE}, we have the bound
	\begin{equation}
		\begin{split}
			&\int_0^t\int_{S_t} \E^{\mathbb{Q}}\left[\left.\nnorm{\partial_x G_{t-s}(y,\Hat{X}_s)}^{a}\ind_{\{\hat{\tau}_k + \varsigma_k^i \le s < \hat{\tau}_{k+1}\}}\right|\hat{\tau}_k^i + \varsigma_k^i = r, \xi_k^i = x_0\right]^{\frac{1}{a}}\diff y \diff s \\
			&\hspace{7cm}\le C_a \int_{-S_t} \tilde{t}^{-\frac{1}{a}} \nnorm{z}^{\frac{1}{a}} \tilde{y}^{\frac{1}{a}}e^{-\frac{(\tilde{y} - \nnorm{z})^2}{4a\tilde{t}}} \diff \tilde{y}.
		\end{split}
	\end{equation}
	Recalling $S_t = \Upsilon_t(S)$ and $\Upsilon_r(-x_0) = z$, employing the change of variable $\tilde{y} = - \Upsilon_t(-\tilde{x})$ and the bounds in \eqref{eq:GENERAL BOUNDS ON THE SCALE TRANSFORM} which hold for all $\tilde{x}$, we have
	\begin{equation}\label{eq: CONTROL ON THE MIDDLE TERM FIVE}
		\begin{split}
			&\int_0^t\int_{S_t} \E^{\mathbb{Q}}\left[\left.\nnorm{\partial_x G_{t-s}(y,\Hat{X}_s)}^{a}\ind_{\{\hat{\tau}_k + \varsigma_k^i \le s < \hat{\tau}_{k+1}\}}\right|\hat{\tau}_k^i + \varsigma_k^i = r, \xi_k^i = x_0\right]^{\frac{1}{a}}\diff y \diff s \\
			& \hspace{7cm}\le C_{a,\sigma} \int_{0}^{\varepsilon} \tilde{t}^{-\frac{1}{a}} x_0^{\frac{1}{a}} \tilde{x}^{\frac{1}{a}}e^{-\frac{c(\gamma - \varepsilon )^2}{4a\tilde{t}}} \diff \tilde{x}\\
			& \hspace{7cm}\le C_{a,\sigma} R^{\frac{1}{a}}\varepsilon^{1 + \frac{1}{a}},
		\end{split}
	\end{equation}
	where the final line follows from the fact that for any fixed $z > 0$, the function $t \mapsto t^{-1}e^{-z^2/4t}$ is bounded by $4e^{-1}z^{-2}$ and $x_0 \in [\gamma,R]$. Inserting this upper bound into \eqref{eq: APPLICATION OF FUBINI AND HOLDER TO GET A PRELIMINARY UPPERBOUND ON IK}, we obtain
	\begin{equation}\label{eq: FINAL UPPERBOUND ON IK}
		I_k \le C_{a,p,q,\sigma}R^{\frac{1}{a}}\varepsilon^{1 + \frac{1}{a}} \prob[\tau_k^i \le T]^\frac{1}{a}.
	\end{equation}
	Therefore, inserting the upper bounds \eqref{eq:BOUND ON THE I TERM IN THE DECOMPOSITION OF THE } and \eqref{eq: FINAL UPPERBOUND ON IK} into \eqref{eq: DECOMPOSITION OF THE MIDDLE TERM INTO AN INFINITE SUM OF INTEGRALS}, we obtain the bound
	\begin{equation}\label{eq:BOUND ON THE SECOND TIME IN THE DECOMPOSITION THAT WE CARE ABOUT}
		\int_0^{t_0}\E\left[\nnorm{\hat{b}_s^i}\ind_{\{\Hat{X}_s < 0\}}\nnorm{\partial_x u(s,\Hat{X}_s)}\right] \diff s
		\le C \varepsilon^{1 + \frac{1}{a}}\left(t^{-\frac{1}{a}}\E[\nnorm{X_0}]^{\frac{1}{a}} + \sum_{k \ge 1} \prob[\tau_k^i \le T]^{\frac{1}{a}}\right).
	\end{equation}
	Inserting the bounds \eqref{eq:BOUND ON THE FIRST TIME IN THE DECOMPOSITION THAT WE CARE ABOUT}, \eqref{eq:BOUND ON THE THIRD TIME IN THE DECOMPOSITION THAT WE CARE ABOUT} and \eqref{eq:BOUND ON THE SECOND TIME IN THE DECOMPOSITION THAT WE CARE ABOUT} into \eqref{eq:THE DECOMPOSITION WE WISH TO CONTROL IN THE BOUNDARY ESTIMATES} and using \cref{prop: EXPONENTIAL DECAY OF THE HITTING PROBABILITIES} to control the sum of the hitting time probabilities, we obtain
	\begin{equation}
		\prob\left[X_t^i \in S,\, X_t^i < 0\right] = O(t^{-\frac{\delta}{2}}\varepsilon^{1 + \beta})
	\end{equation}
	for some $\delta \in (0,1],\, \beta > 0$ and all $\varepsilon \ll 1$. This completes the proof.
\end{proof}

Combining the results obtained above, we have the following.

\begin{corollary}[Regularity of the empirical measure]\label{cor: REGULARITY CONTROL ON THEM EMPIRICAL MEASURES}
	The empirical measures $\nu^N$ satisfy, uniformly in $N \ge 1$ and $t \in (0,T]$,
	\begin{equation}\label{eq: REFULARITY PROPERTIES OF THE EMPIRICAL MEASURES OF THE NEURO PARTICLE SYSTEM WITH JUMPS}
		\begin{cases}
			\exists \varepsilon > 0 \qquad &\E\left[\nu_t^N(-\infty,-a)\right] = O(e^{-\varepsilon a^2})\quad \text{as}\quad a \to \infty,\\
			\exists \delta \in (0,1] \qquad &\E\left[\nu_t^N(a,b)\right] \le Ct^{-\delta/2}\nnorm{b-a}^\delta \quad \text{for}\quad a < b < 0,\\
			\exists \delta \in (0,1], \beta > 0 \qquad &\E\left[\nu_t^N(-\varepsilon,0)\right] = t^{-\delta/2}O(\varepsilon^{1+\beta})\quad \text{as}\quad \varepsilon \to 0.
		\end{cases}
	\end{equation}
\end{corollary}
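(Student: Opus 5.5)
Since $\E[\nu_t^N(S)] = \frac{1}{N}\sum_{i=1}^N \prob[X_t^i \in S,\, X_t^i<0]$, I will prove each of the three properties with a bound on $\prob[X_t^i\in S, X_t^i<0]$ that is uniform in $i$. Averaging over $i$ then gives the corresponding estimate on the empirical measure, with the same constants, uniformly in $N$.

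\emph{Gaussian tail.} I will use Markov's inequality together with \cref{prop: UNIFORM BOUNDS ON THE EXPONENTIAL OF GAMMA HAT T}. Fix $\delta_0<\min\{\gamma/4,1/8C_\sigma^2T\}$. Since $|X_t^i|^2\le (R+2\sup_{s\le t}|Z_s^i|)^2 \le 2R^2+8\hat\Gamma_T^i$, we get
\[
\prob[X_t^i<-a]\le e^{-\varepsilon a^2}\,\E\bigl[e^{\varepsilon|X_t^i|^2}\bigr]\le e^{-\varepsilon a^2}e^{2\varepsilon R^2}\,\E\bigl[e^{8\varepsilon\hat\Gamma_T^i}\bigr].
\]
Choosing $\varepsilon=\delta_0/8$ makes the right-hand side bounded uniformly in $i,N,t$. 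Equivalently, this is the sub-Gaussianity in \cref{cor: SUBGUASSIANTY COROLLARY}.

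\emph{Local H\"older bound.} I will apply \cref{prop: PROPOSTION SHOWING THAT WE HAVE AN UPPER BOUND ON THE PROBABILITYS OF XTI TAKING VALUES IN SOME SET} with $S=(a,b)$, so that $\operatorname{Leb}(S)=|b-a|$. The only remaining task is to bound $\sum_{k\ge1}\prob[\tau_k^i\le t]^\delta$ uniformly. Since $t\le T$, we have $\prob[\tau_k^i\le t]\le\prob[\tau_k^i\le T]$. By \cref{prop: EXPONENTIAL DECAY OF THE HITTING PROBABILITIES}, this is at most $C^{1/p}e^{-c(k-1)^2}$ for some $c>0$. Raising it to the power $\delta>0$ still leaves a Gaussian-decaying, hence summable, sequence with a bound independent of $i$ and $N$. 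This gives $\E[\nu_t^N(a,b)]\le Ct^{-\delta/2}|b-a|^\delta$. One bookkeeping point needs checking: the $\delta$ here is tied to $p$ through $\delta=(p-1)/p$ with $p$ close to $1$, so I must verify that this choice stays compatible with the range of $p$ allowed in \cref{prop: EXPONENTIAL DECAY OF THE HITTING PROBABILITIES}. Both results only require $p$ sufficiently close to $1$, so a common $p$ exists.

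\emph{Boundary decay.} This is exactly \cref{prop: BOUNDARY DECAY OF THE EMPIRICAL MEAUSRES IN EXPECTATION}. Its proof bounds the individual probabilities uniformly in $i$, which is what the averaging step above needs. The last two statements carry their own exponents $\delta$. If a single $\delta$ is wanted, I will use that $t^{-\delta/2}\le T^{(\delta'-\delta)/2}t^{-\delta'/2}$ for $\delta\le\delta'$ to pass to the larger exponent in the $t$-factor. The main (mild) obstacle is therefore just this uniformity bookkeeping: making sure every constant is independent of $N$ and $t\in(0,T]$. That holds because all the ingredient estimates are uniform in these parameters.
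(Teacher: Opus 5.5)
Your proposal is correct and follows the paper's own argument. The Gaussian tail comes from uniform sub-Gaussianity (you only make the Markov step explicit via the exponential moment of $\hat\Gamma_T^i$), the H\"older bound comes from the set-probability estimate with $\sum_{k}\prob[\tau_k^i\le t]^\delta$ controlled by the Gaussian decay of hitting probabilities, and the boundary decay is quoted directly from the preceding proposition. Your compatibility check on $p$ is harmless but unnecessary: the $p$ in the hitting-time estimate can be chosen independently of the one defining $\delta$, since $\prob[\tau_k^i\le T]$ is just a fixed number bounded by any admissible choice.
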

\begin{proof}
	By the sub-Gaussianity of $X_t^i$, which is uniform in $i$ and $N$, the first claim is a direct consequence of Corollary \ref{cor: SUBGUASSIANTY COROLLARY}. For the second claim, by \cref{prop: PROPOSTION SHOWING THAT WE HAVE AN UPPER BOUND ON THE PROBABILITYS OF XTI TAKING VALUES IN SOME SET}
	\begin{equation*}
		\E\left[\nu_t^N(a,b)\right] \le C t^{-\frac{\delta}{2}}\operatorname{Leb}((a,b))^{\delta}\frac{1}{N}\sum_{i=1}^N\left[1 + \sum_{k \ge 1}\prob\left[\tau_k^i \le t\right]^\delta\right],
	\end{equation*}
	for all $\delta > 0$, close enough to $0$, where $N^{-1}\sum_{i=1}^N\left[1 + \sum_{k \ge 1}\prob\left[\tau_k^i \le t\right]^\delta\right]$ is bounded independently of $N$ by \cref{prop: EXPONENTIAL DECAY OF THE HITTING PROBABILITIES}.
	The third claim is exactly the statement of \cref{prop: BOUNDARY DECAY OF THE EMPIRICAL MEAUSRES IN EXPECTATION}.
\end{proof}

\section{Control on the Increments of the Cumulative Spike Count}\label{proof: PROOF ON CONTROL OF THE FIRINGS}

We establish some regularity properties of the increments of the cumulative spike count and their second moments. This can serve to conclude that $F^{D,N}$ is tight and that the limiting process is continuous.

\begin{proposition}\label{prop: PROPOSITION THAT WE CAN CONTROL THE FIRST MOMENT OF THE INCREMENTS OF THE  DELAYED FIRING FUNCTION}
	For any $s,t \in [0,T]$, we may find a $C > 0$, independent of $s$, $t$ and $N$, such that 
	\begin{equation}
		\E\left[\nnnorm{F_t^{D,N} - F_s^{D,N}}\right] \le C\nnnorm{t- s}.
	\end{equation}
\end{proposition}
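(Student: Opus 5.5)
Without loss of generality take $s<t$. By exchangeability of the particles it suffices to bound $\E[J_t^{D,1}-J_s^{D,1}]$ uniformly in $N$, since $F^{D,N}$ is nondecreasing and
\[
\E\bigl[\nnnorm{F_t^{D,N}-F_s^{D,N}}\bigr]=\frac1N\sum_{i=1}^N\E\bigl[J_t^{D,i}-J_s^{D,i}\bigr].
\]
I would write the increment as a sum over resets,
\[
\E\bigl[J_t^{D,i}-J_s^{D,i}\bigr]=\sum_{k\ge1}\prob\bigl[s<\tau_k^i+\varsigma_k^i\le t\bigr].
\]
The key point is that the refractory period smooths out the times at which resets occur. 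By \cite[Lemma~3.3]{MAIN_PAPER_ON_THE_THE_NEURONS_WHICH_IS_YET_TO_BE_COMPLETED}, $\varsigma_k^i$ is independent of $\tau_k^i$, and by Assumption~\ref{ass: ASSUMPTIONS FROM SOJMARK SPDE PAPER APPLIED TO THE NEUROSCIENCE SETTING}\eqref{ass: ASSUMPTIONS FROM SOJMARK SPDE PAPER APPLIED TO THE NEUROSCIENCE SETTING FOUR} it has density $\pref\in\mathcal{W}^{1,1}_0(\R_+)$. In one dimension this embeds into bounded continuous functions, so $\norm{\pref}_\infty\le\norm{\pref'}_1<\infty$.

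Conditioning on $\tau_k^i$ then gives
\[
\prob\bigl[s<\tau_k^i+\varsigma_k^i\le t\bigr]=\int_{[0,t]}\int_{s-u}^{t-u}\pref(r)\diff r\,\prob[\tau_k^i\in\diff u]\le\norm{\pref}_\infty\nnnorm{t-s}\,\prob[\tau_k^i\le t].
\]
The restriction $u\le t$ is valid because $\varsigma_k^i\ge0$. Summing over $k$ yields
\[
\E\bigl[J_t^{D,i}-J_s^{D,i}\bigr]\le\norm{\pref}_\infty\nnnorm{t-s}\sum_{k\ge1}\prob[\tau_k^i\le T]=\norm{\pref}_\infty\nnnorm{t-s}\,\E[J_T^i].
\]
The last factor is bounded uniformly in $i$ and $N$ by Corollary~\ref{prop: BOUNDS ON THE SUP OF THE P TH MOMENTS OF THE SUP NORM OF X} with $p=1$. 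Alternatively, Proposition~\ref{prop: EXPONENTIAL DECAY OF THE HITTING PROBABILITIES} gives a summable Gaussian tail in $k$. Averaging over $i$ then completes the argument with $C=\norm{\pref}_\infty\sup_{i,N}\E[J_T^i]$.

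The only delicate step is the independence of $\varsigma_k^i$ from $\tau_k^i$, which permits the Fubini/convolution computation; this is supplied by the cited lemma. I expect no real obstacle beyond checking that $\pref$ is bounded, which follows from the Sobolev embedding noted above.
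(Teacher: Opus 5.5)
Your proof is correct and takes essentially the same route as the paper: you write the increment as $\sum_k \prob[\tau_k^i+\varsigma_k^i\in(s,t]]$, use the independence of $\varsigma_k^i$ and $\tau_k^i$ to convolve with $\pref$, bound each term by a multiple of $\norm{\pref}_\infty (t-s)\,\prob[\tau_k^i\le T]$, and sum over $k$. The differences are minor. You bound the inner integral directly, which gives the constant $\norm{\pref}_\infty$ instead of the paper's $2\norm{\pref}_\infty$ obtained by splitting at $u=s$. You justify $\norm{\pref}_\infty\le\norm{\pref'}_1$ via the zero trace, whereas the paper uses this tacitly. Finally, you control $\sum_k\prob[\tau_k^i\le T]=\E[J_T^i]$ with the first-moment case of the uniform moment bound, rather than with the Gaussian decay of hitting probabilities; either works.
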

\begin{proof}
	Without loss of generality, we may suppose that $s < t$. As $\varsigma_k^i$ is independent of $\tau_k^i$ for any $i$ and $k$, then
	\begin{align}\label{eq: GENERAL BOUND THAT THE RESET TIME IS IN AN INTERVAL}
		\begin{split}
			\prob\left[\tau_k^i + \varsigma_k^i \in (s,t]\right] &= \int_0^t\int_{s-u}^{t-u}\pref(u) \diff\prob\left[\tau_k^i \in \diff v\right] \diff u\\
			&= \int_0^s\int_{s-v}^{t-v}\pref(u) \diff u \diff\prob\left[\tau_k^i \in \diff v\right]  + \int_s^t\int_{0}^{t-v}\pref(u) \diff u \diff\prob\left[\tau_k^i \in \diff v\right] \\
			&\le 2 \norm{\pref}_{\infty}(t - s)\prob\left[\tau_k^i \le T\right]. 
		\end{split}
	\end{align}
	Hence, by definition of $F^{D,N}$ in \eqref{eq:F-DN,J-Di},
	\begin{align*}
		\E\left[\nnnorm{F_t^{D,N} - F_s^{D,N}}\right] 
		&= \frac{1}{N} \sum_{i = 1}^N \sum_{k \ge 1} \prob\left[\tau_k^i + \varsigma_k^i \in (s,t]\right]\\
		&\le 2 \norm{\pref}_{\infty}(t - s) \frac{1}{N} \sum_{i = 1}^N \sum_{k \ge 1} \prob\left[\tau_k^i \le T\right].
	\end{align*}
	By \cref{prop: EXPONENTIAL DECAY OF THE HITTING PROBABILITIES}, $N^{-1}\sum_{i = 1}^N \sum_{k \ge 1} \prob\left[\tau_k^i \le T\right]$ is bounded uniformly in $N$.
\end{proof}

\begin{proposition}\label{prop: PROPOSITION THAT WE CAN CONTROL THE SECOND MOMENT OF THE INCREMENTS OF THE  DELAYED FIRING FUNCTION}
	For any $s,t \in [0,T]$, there exist a $\beta > 0$ and a $C = C(\beta) > 0$, independent of $s$, $t$ and $N$, such that 
	\begin{equation}
		\E\left[\nnnorm{F_t^{D,N} - F_s^{D,N}}^2\right] \le C(\nnnorm{t-s}N^{-1} + \nnnorm{t- s}^{1 + \beta}).
	\end{equation}
\end{proposition}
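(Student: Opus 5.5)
Take $s<t$ and write $\Delta^i := J_t^{D,i}-J_s^{D,i} = \sum_{k\ge1}\ind_{(s,t]}(\tau_k^i+\varsigma_k^i)$, so that
\[
\E\bigl[|F_t^{D,N}-F_s^{D,N}|^2\bigr] = \frac{1}{N^2}\sum_{i=1}^N \E\bigl[(\Delta^i)^2\bigr] + \frac{1}{N^2}\sum_{i\ne j}\E\bigl[\Delta^i\Delta^j\bigr].
\]
The diagonal part is responsible for the $|t-s|N^{-1}$ term and the off-diagonal part for the $|t-s|^{1+\beta}$ term.

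\textbf{Diagonal terms.} First, $(\Delta^i)^2 \le \Delta^i \cdot J_T^{D,i}$. I will show $\E[(\Delta^i)^2]\le C|t-s|$. Expanding, $\E[(\Delta^i)^2] = \sum_{k,l}\prob[\tau_k^i+\varsigma_k^i\in(s,t],\,\tau_l^i+\varsigma_l^i\in(s,t]]$. For the terms with $l=k$, the estimate \eqref{eq: GENERAL BOUND THAT THE RESET TIME IS IN AN INTERVAL} gives $2\|\pref\|_\infty(t-s)\prob[\tau_k^i\le T]$. For $l\neq k$, bound the probability by $\prob[\tau_{\max(k,l)}^i\le T,\ \tau_{\min(k,l)}^i+\varsigma_{\min(k,l)}^i\in(s,t]]$. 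Then apply Cauchy--Schwarz between the interval event (of order $(t-s)\prob[\tau_{\min}\le T]$) and $\prob[\tau^i_{\max}\le T]$, using the Gaussian decay of Proposition \ref{prop: EXPONENTIAL DECAY OF THE HITTING PROBABILITIES}. This gives the summable bound $C(t-s)^{1/2}e^{-c(k\vee l)^2}$, which is only $|t-s|^{1/2}$. To do better, I will use conditional independence instead: the event $\{\tau^i_{\max}\le T\}$ requires $\max-\min$ further hits after time $s$, so $\prob[\tau_{\max}\le T\mid \mathcal{F}_{\tau_{\min}+\varsigma_{\min}}]$ is Gaussian-small in $\max-\min$, uniformly, via the same Girsanov/Hölder argument as in Proposition \ref{prop: EXPONENTIAL DECAY OF THE HITTING PROBABILITIES} applied from a stopping time. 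With this, the sum over $k,l$ is $\le C(t-s)$ after using Hölder with exponent close to $1$ (since $p$ is close to $1$, the resulting power of $(t-s)$ is $\ge 1-\eta$; alternatively use $(t-s)\le T^{\eta}(t-s)^{1-\eta}$, which is harmless because the $N^{-1}$ term only needs to hold up to such adjustments). Summing the $N$ diagonal terms and dividing by $N^2$ gives the $C|t-s|N^{-1}$ contribution.

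\textbf{Off-diagonal terms.} This is the main obstacle, because the particles are correlated through $\nu^N,\mathfrak f^N$ and $W^0$. For $i\neq j$, write $\E[\Delta^i\Delta^j]=\sum_{k,l}\prob[\tau_k^i+\varsigma_k^i\in(s,t],\ \tau_l^j+\varsigma_l^j\in(s,t]]$. The idea is to exploit that the refractory periods $\varsigma_k^i,\varsigma_l^j$ are independent of each other and of $(\tau_k^i,\tau_l^j)$; independence of $\varsigma^i_k$ from $\tau^i_k$ holds by \cite[Lemma~3.3]{MAIN_PAPER_ON_THE_THE_NEURONS_WHICH_IS_YET_TO_BE_COMPLETED}, and the argument there should extend to the pair. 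Conditioning on $(\tau_k^i,\tau_l^j)$ and integrating out $\varsigma$'s as in \eqref{eq: GENERAL BOUND THAT THE RESET TIME IS IN AN INTERVAL} gives
\[
\prob\bigl[\tau_k^i+\varsigma_k^i\in(s,t],\ \tau_l^j+\varsigma_l^j\in(s,t]\bigr]\le 4\|\pref\|_\infty^2(t-s)^2\,\prob[\tau_k^i\le T,\tau_l^j\le T].
\]
Since $\prob[\tau_k^i\le T,\tau_l^j\le T]\le \prob[\tau_k^i\le T]^{1/2}\prob[\tau_l^j\le T]^{1/2}$, the double sum over $k,l$ is finite uniformly by Proposition \ref{prop: EXPONENTIAL DECAY OF THE HITTING PROBABILITIES}. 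This yields $C|t-s|^2$, i.e. $\beta=1$.

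If the joint independence of $(\varsigma^i_k,\varsigma^j_l)$ from $(\tau^i_k,\tau^j_l)$ fails (the $\tau$'s depend on earlier resets of other particles, which in turn depend on their $\varsigma$'s), I would fall back on a cruder argument. Take only $\varsigma_k^i$ out, by independence from $\tau_k^i$ and from the $\mathcal F_{\tau_k^i}$-information, and treat $\{\tau^j_l+\varsigma^j_l\in(s,t]\}$ with a Hölder split: $(t-s)\cdot\prob[\tau^j_l+\varsigma^j_l\in(s,t]]^{\beta}$, which gives $|t-s|^{1+\beta}$ with $\beta<1$ coming from the Hölder exponent. That is where the generic $\beta>0$ in the statement would arise.

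Combining both parts gives $\E|F_t^{D,N}-F_s^{D,N}|^2\le C(|t-s|N^{-1}+|t-s|^{1+\beta})$.
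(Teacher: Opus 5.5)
There is a genuine gap in your treatment of the same-particle cross terms $k\neq l$ of $\E[(\Delta^i)^2]$.

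Take $l>k$. When you replace $\{\tau_l^i+\varsigma_l^i\in(s,t]\}$ by $\{\tau_l^i\le T\}$, you discard one of the two available factors of $(t-s)$. The factor that remains can then only be recovered through H\"older, at a loss in the exponent. Cauchy--Schwarz gives $(t-s)^{1/2}$. Your conditional route gives at best $(t-s)^{1-\eta}$, because $\prob[\tau_l^i\le T\mid\mathcal F_{\tau_k^i+\varsigma_k^i}]$ has no deterministic Gaussian bound in general. The conditional Girsanov density depends, through $\hat\Lambda^i$, on the unbounded state at that stopping time, so a further H\"older step is unavoidable.

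Your repair via $(t-s)\le T^{\eta}(t-s)^{1-\eta}$ runs the wrong way. A diagonal contribution of order $|t-s|^{1-\eta}N^{-1}$ is not bounded by $C(|t-s|N^{-1}+|t-s|^{1+\beta})$: take $N=1$ and let $t-s\to0$. The statement leaves no room for such an adjustment.

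The missing idea is to keep both interval constraints and integrate out the \emph{later} refractory period. Since $\tau_k^i+\varsigma_k^i\le\tau_l^i$, the variable $\varsigma_l^i$ is independent of $(\tau_k^i,\varsigma_k^i,\tau_l^i)$. Hence $\prob[\tau_k^i+\varsigma_k^i\in(s,t],\,\tau_l^i+\varsigma_l^i\in(s,t]]\le 2\|\pref\|_\infty(t-s)\,\prob[\tau_k^i+\varsigma_k^i\in(s,t],\,\tau_l^i\le t]$. Only now apply H\"older together with \eqref{eq: GENERAL BOUND THAT THE RESET TIME IS IN AN INTERVAL}. This gives $C(t-s)^{1+1/p}\prob[\tau_k^i\le T]^{1/p}\prob[\tau_l^i\le T]^{1/q}$, which is summable in $k,l$ by Proposition~\ref{prop: EXPONENTIAL DECAY OF THE HITTING PROBABILITIES}. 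This is exactly how the paper handles its term $II$. With it, your target $\E[(\Delta^i)^2]\le C|t-s|$ does hold, and the rest of your diagonal bookkeeping goes through.

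For $i\neq j$ you are right to distrust joint independence. If particle $i$ resets at $\tau_k^i+\varsigma_k^i<\tau_l^j$, then $\tau_l^j$ depends on $\varsigma_k^i$ through $\nu^N$ and $\mathfrak f^N$. So the $(t-s)^2$ bound, and with it $\beta=1$, is not available.

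Your fallback is essentially the paper's argument for its term $III$, but it needs the same ordering principle. You may only integrate out the refractory period attached to the \emph{later} of the two spikes. So split on $\{\tau_k^i\le\tau_l^j\}$ and its complement, and take out $\varsigma_l^j$ or $\varsigma_k^i$ respectively. Integrating out $\varsigma_k^i$ irrespective of the order fails for the same reason joint independence does. H\"older on what remains then gives $(t-s)^{1+1/p}$, i.e.\ $\beta=1/p\in(0,1)$.
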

\begin{proof}
	Without loss of generality, we may suppose that $s \le t$. By definition of $F^{D,N}$, we have
	\begin{align*}
		\E\left[\nnnorm{F_t^{D,N} - F_s^{D,N}}^2\right] =& \frac{1}{N^2} \sum_{i = 1}^N\sum_{k \ge 1} \prob \left[\tau_k^i + \varsigma_k^i \in (s,t]\right]\\
		&+ \frac{1}{N^2}\sum_{i = 1}^N \sum_{k \neq \ell} \prob\left[\tau_k^i + \varsigma_k^i \in (s,t],\tau_\ell^i + \varsigma_\ell^i \in (s,t]\right]\\
		&+ \frac{1}{N^2}\sum_{\substack{i,j = 1 \\ i \ne j}}^N \sum_{k,\ell \ge 1} \prob\left[\tau_k^i + \varsigma_k^i \in (s,t],\tau_\ell^j + \varsigma_\ell^j \in (s,t]\right]\\
		&= I  + II + III.  
	\end{align*} 
	The aim is to analyse each term above separately. By \eqref{eq: GENERAL BOUND THAT THE RESET TIME IS IN AN INTERVAL}, the first term is upper bounded by
	\begin{equation*}
		I \le \frac{c(t-s)}{N^2} \sum_{i = 1}^N\sum_{k \ge 1} \prob\left[\tau_k^i \le T\right] \le \frac{C(t-s)}{N}.
	\end{equation*}
	The constant $C$ in the last inequality is independent of $N$ by \cref{prop: EXPONENTIAL DECAY OF THE HITTING PROBABILITIES}. For the second term, we observe that if $\ell > k$, then, by definition, $\tau_k^i + \varsigma_k^i \le \tau_\ell^i$ almost surely. As the system receives no information from $\varsigma_\ell^i$ until after time $\tau_\ell^i$, we have by construction that $\varsigma_\ell^i$ is independent of $\tau_k^i,\,\varsigma_k^i$, and $\tau_\ell^i$. Therefore,
	\begin{align*}
		&\prob\left[\tau_k^i + \varsigma_k^i \in (s,t],\tau_\ell^i + \varsigma_\ell^i \in (s,t]\right]\\
		&\qquad= \int_0^t\int_{s-u}^{t-u}\pref(u)\prob\left[\tau_k^i + \varsigma_k^i \in (s,t],v + u \in (s,t]\mid \tau_\ell^i = v \right] \diff\prob\left[\tau_\ell^i \in \diff v\right] \diff u\\
		&\qquad= \int_0^s\int_{s-v}^{t-v}\pref(u)\prob\left[\tau_k^i + \varsigma_k^i \in (s,t],v + u \in (s,t]\mid \tau_\ell^i = v \right] \diff\prob\left[\tau_\ell^i \in \diff v\right] \diff u\\
		&\qquad \quad + \int_s^t\int_{0}^{t-v} \pref(u)\prob\left[\tau_k^i + \varsigma_k^i \in (s,t],v + u \in (s,t]\mid \tau_\ell^i = v \right] \diff\prob\left[\tau_\ell^i \in \diff v\right] \diff u\\
		&\qquad\le 2 \norm{\pref}_{\infty}(t - s)\prob\left[\tau_k^i + \varsigma_k^i \in (s,t],\tau_\ell^i \le t\right]\\
		&\qquad\le 2 \norm{\pref}_{\infty}(t - s)\prob\left[\tau_k^i + \varsigma_k^i \in (s,t]\right]^{\frac{1}{p}}\prob\left[\tau_\ell^i \le t\right]^{\frac{1}{q}}\\
		&\qquad\le C(t - s)^{1 + \frac{1}{p}}\prob\left[\tau_k^i \le t\right]^{\frac{1}{p}}\prob\left[\tau_\ell^i \le t\right]^{\frac{1}{q}},
	\end{align*}
	where the penultimate line follows from H\"older's inequality and the last line follows from \eqref{eq: GENERAL BOUND THAT THE RESET TIME IS IN AN INTERVAL}. Therefore, by \cref{prop: EXPONENTIAL DECAY OF THE HITTING PROBABILITIES}, there is a larger $C > 0$ independent of $N$ but dependent on $p > 1$ such that $II \le C(t - s)^{1 + \frac{1}{p}}$. For the final term, as we have independence between the Brownian motions and the waiting times, on the event that $\tau_\ell^j$ has not occurred yet, $\varsigma_\ell^j$ must be independent of the other terms. That is, on the event $\{\tau_k^i + \varsigma_k^i \le \tau_\ell^j\}$, $\varsigma_\ell^j$ is independent of $\tau_k^i,\,\varsigma_k^i$ and $\tau_\ell^j$. Therefore,
	\begin{align*}
		&\prob\left[\tau_k^i + \varsigma_k^i \in (s,t],\tau_\ell^j + \varsigma_\ell^j \in (s,t], \tau_k^i \le \tau_\ell^j\right]\\
		&\qquad= \int_0^t\int_{s-u}^{t-u}\pref(u)\prob\left[\tau_k^i + \varsigma_k^i \in (s,t],v + u \in (s,t], \tau_k^i \le v \mid \tau_\ell^j = v \right] \diff\prob\left[\tau_\ell^j \in \diff v\right] \diff u\\ 
		&\qquad\le 2 \norm{\pref}_{\infty}(t - s)\prob\left[\tau_k^i + \varsigma_k^i \in (s,t],\tau_\ell^j \le t\right]\\
		&\qquad\le C(t - s)^{1 + \frac{1}{p}}\prob\left[\tau_k^i \le t\right]^{\frac{1}{p}}\prob\left[\tau_\ell^j \le t\right]^{\frac{1}{q}}.
	\end{align*}
	Therefore, by \cref{prop: EXPONENTIAL DECAY OF THE HITTING PROBABILITIES}, there is a larger $C > 0$ independent of $N$ but dependent on $p > 1$ such that $III \le C(t - s)^{1 + \frac{1}{p}}$.
\end{proof}

\begin{proposition}\label{prop: CONTROL ON THE PROPORTION OF NEURONS WHICH FIRE TWICE}
	For any given $T > 0$, consider $0 \le t \le t+h \le T$ with $h \in (0,\,1)$. Then, we may find a $C_p > 0$, independent of $h$, and an integer $N_0 = N_0(h)$ such that for all $N \ge N_0$, we have
	\begin{equation}
		\prob\left[F_{t+h}^N - F_{t-}^N > 1 + C_p h^{1/16}\right] \le C_p h^p,
	\end{equation}
	for any $p \ge 1$.
\end{proposition}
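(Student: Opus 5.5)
We control the proportion of neurons that spike at least twice in the window $[t,t+h]$ and show it is small with high probability; the ``$1$'' in the bound accounts for each neuron spiking at most once. Writing
\[
F_{t+h}^N - F_{t-}^N = \frac1N\sum_{i=1}^N \bigl(J^i_{t+h}-J^i_{t-}\bigr),
\]
we split each summand as $\ind_{\{J^i_{t+h}-J^i_{t-}\ge 1\}} + (J^i_{t+h}-J^i_{t-}-1)^+$. The first part is at most $1$ after averaging. So it suffices to show
\[
\prob\Bigl[\frac1N\sum_{i=1}^N (J^i_{t+h}-J^i_{t-}-1)^+ > C_p h^{1/16}\Bigr]\le C_p h^p .
\]

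\textbf{Step 1: a neuron spiking twice must traverse a macroscopic distance.} Between two spikes of the same neuron, the neuron is held at $0$ for the refractory period $\varsigma$ and then reset to $-\xi\le-\gamma$. It must then climb back to $0$, so $Z^i$ increases by at least $\gamma$ within a time of length at most $h$. Therefore
\[
\{J^i_{t+h}-J^i_{t-}\ge 2\}\subseteq\Bigl\{\sup_{t\le u\le v\le t+h}(Z^i_v-Z^i_u)\ge\gamma\Bigr\},
\]
and more spikes force proportionally larger oscillation, giving $(J^i_{t+h}-J^i_{t-}-1)^+\le \gamma^{-1}\operatorname{osc}_{[t,t+h]}Z^i$ on that event. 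The oscillation of $Z^i$ comes from two sources:
\begin{itemize}
\item the drift, bounded by $h\,C(1+\hat\Lambda^i_T)$ via \eqref{eq: UPPER BOUND ON B IN TERMS OF Z AND ITS EMPIRICAL AVERAGE};
\item the martingale part $Y^i$, whose quadratic variation over the window is at most $C_\sigma^2 h$.
\end{itemize}
By Dubins--Schwarz and the sub-Gaussian bounds (\cref{prop: UNIFORM BOUNDS ON THE EXPONENTIAL OF GAMMA HAT T}, \cref{cor: SUBGUASSIANTY COROLLARY}), we obtain
\[
\prob\bigl[\operatorname{osc}_{[t,t+h]}Z^i\ge\gamma\bigr]\le C e^{-c/h}+\prob\bigl[C h(1+\hat\Lambda^i_T)\ge\gamma/2\bigr]\le C_q h^q
\]
for every $q$. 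Combining this with Cauchy--Schwarz and the moment bounds of \cref{prop: BOUNDS ON THE SUP OF THE P TH MOMENTS OF THE SUP NORM OF X}, the random variables $D^i:=(J^i_{t+h}-J^i_{t-}-1)^+$ satisfy $\E[(D^i)^m]\le C_{m,q}h^q$ for all $m$ and $q$, uniformly in $i$ and $N$.

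\textbf{Step 2: concentration of the average.} Markov's inequality alone gives
\[
\prob\Bigl[\frac1N\sum_i D^i > h^{1/16}\Bigr]\le h^{-1/16}\E[D^1]\le C h^{q-1/16},
\]
which already yields the claim for every $p$ by taking $q$ large, without any need for large $N$. The role of $N_0(h)$ and of the exponent $1/16$ is a safeguard for the point I expect to be the main obstacle: the strong drift bound in Step 1 uses the coupling through $\hat\Lambda^i$ only via its tails. If a direct per-particle bound were unavailable (for instance, if the drift contribution were controlled only through $F^N$ rather than $\hat\Lambda^i$), I would instead handle the common noise by conditioning on $W^0$. I would then use conditional independence of the idiosyncratic parts to get a law-of-large-numbers bound: compute the variance of $\frac1N\sum_i \ind_{\{\operatorname{osc}Z^i\ge\gamma\}}$ and apply Chebyshev, with fourth moments giving the $N^{-1}$ terms. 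Choosing $N\ge N_0(h)$ absorbs these $N^{-1}$ terms into $h^p$, and splitting thresholds as $h^{1/16}$ (through repeated Cauchy--Schwarz/Hölder steps, each halving exponents) would produce the stated rate.

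Finally, collect constants. The per-particle tail estimates and moment bounds are uniform in $i$ and $N$ by the earlier results, so $C_p$ is independent of $h$, which completes the argument.
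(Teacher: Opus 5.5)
Your proof is correct, and it takes a genuinely different and more economical route than the paper. Both arguments rest on the same observation. $Z^i$ is continuous and climbs by $\xi^i_k\ge\gamma$ between consecutive spikes, so a double spike in $[t,t+h]$ forces $\gamma\le c_1h(1+\hat\Lambda^i_T)+2\sup_{s\le h}\nnnorm{Y^i_{t+s}-Y^i_t}$. The paper uses this inequality only in aggregate:
\begin{itemize}
\item it introduces the stopping time $\tau(\beta)$ at which the fraction of double-spikers reaches $\beta=h^{1/4}$;
\item it sums the inequality over a selected set of $\lceil N\beta\rceil$ such particles and uses Cauchy--Schwarz to pass to population averages of $\sup\nnnorm{Z^j}^2$ and of the squared martingale increments;
\item it applies Markov's inequality after dividing by $\beta$.
\end{itemize}
The rounding $\lceil N\beta\rceil/N\le\beta+1/N$ is what forces the lower bound $N>h^{-1/2}$. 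The threshold $h^{1/16}=\beta^{1/2}h^{-1/16}$ comes from a further Cauchy--Schwarz step on the event $\{\frac1N\sum_i(J^i_{t+h})^2\le h^{-1/8}\}$, which needs its own Markov bound.

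You instead apply the inequality particle by particle:
\begin{itemize}
\item you get $\prob[J^i_{t+h}-J^i_{t-}\ge 2]\le C_qh^q$ uniformly in $i$ and $N$;
\item you control the excess $D^i=(J^i_{t+h}-J^i_{t-}-1)^+$ in expectation via Cauchy--Schwarz against the moments of $J^i_T$;
\item you finish with a single Markov step, by linearity of expectation.
\end{itemize}
This proves a strictly stronger statement. It holds for every $N\ge 1$ (so $N_0\equiv 1$), and $h^{1/16}$ can be replaced by any power of $h$. The paper's aggregated bookkeeping buys nothing extra for this particular statement.

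Two small remarks. First, you do not need the sub-Gaussian estimates at all. The moment bounds of \cref{prop: BOUNDS ON THE SUP OF THE P TH MOMENTS OF THE SUP NORM OF Z}, together with BDG applied to the increment process $s\mapsto Y^i_{t+s}-Y^i_t$, already give $\prob[\text{double spike}]\le C_qh^{q/2}$ for every $q$. If you do use Dubins--Schwarz, it is cleanest to apply it to this increment process rather than to $Y^i$ itself, whose clock at time $t$ is random.

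Second, drop your closing fallback of conditioning on $W^0$. Given $W^0$ the particles remain coupled through $\nu^N$, $\mathfrak{f}^N$ and $\rho$, so the indicators would not be conditionally independent and that route would not work as described. Your main line never uses it.
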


\begin{proof}
	\noindent \underline{Step 1:}\\[1ex]
	We will first show that the proportion of particles that spike twice in a small interval tends to $0$ with the length of the interval uniformly in $N$ for $N$ large. To be more precise, given an interval $[t,t+h]$ and $\beta \in (0,1)$ we define
	\begin{equation*}
		\tau(\beta) \coloneqq \inf \{s \in [t,t+h]\,:\, \frac{1}{N}\sum_{i=1}^N \ind_{\{J_s^i - J_{t-}^i\ge 2\}}\ge \beta\}, \qquad \inf \emptyset = +\infty.
	\end{equation*}
	We shall show for $\beta = h^{1/4}$, $N > h^{-1/2}$ and $p \ge 1$ there exists $C_p$ such that $\prob\left[\tau(\beta)  \le t+ h \right] \le C_ph^p$. 
	
	Now let $\mathcal{I}^{(2)} = \{i \in \{1,\ldots,N\}\,:\, J_{t+h}^i - J_{t-}^i \ge 2\}$, i.e. the random set of particles that spike at least twice on $[t,t+h]$. For the rest of this step, we will be working on the event $\{\tau(\beta) \le t+h\}$. By right-continuity of $J^i$, we have $\nnnorm{\mathcal{I}^{(2)}} \ge N\beta$. Choose $\mathcal{I}^{(2)}(\beta) \subset \mathcal{I}^{(2)}$ such that $N\beta \le \nnnorm{\mathcal{I}^{(2)}(\beta)} < N\beta + 1$, e.g. $\mathcal{I}^{(2)}(\beta)$  is the first $\lceil N\beta \rceil$ particles to spike twice.
	
	\sloppy{
		Now for any $i \in \mathcal{I}^{(2)}(\beta)$, when $X^i$ spikes for the second time, the particle $Z^i$ must have had $\sup_{s \le t} Z_s^i \le \sum_{\tilde{k} = 1}^{k-1}\xi_{\tilde{k}}^i$ and $\sup_{s \le t + h} Z_s^i \ge \sum_{\tilde{k} = 1}^{k}\xi_{\tilde{k}}^i$ for some $k$ (stochastic). Furthermore, as this is the second spike in $[t,\,t+h]$, the value of $Z^i$ must have increased by at least $\gamma$ compared with its value just before time $t$. Hence, $\gamma \le \sup_{t \le s \le t + h} \nnnorm{Z_s^i - Z_{t-}^i}$. Therefore,}
	\begin{align*}
		\gamma &\le \sup_{t \le s \le t + h} \nnnorm{Z_s^i - Z_{t-}^i}\\
		&\le \int_t^{t+h} \nnnorm{b(s,X_s^i,\nu_s^N,\mathfrak{f}_s^N)}\diff{}s +  \sup_{t \le s \le t + h}\nnnorm{\int_t^s\sigma(u,X_u^i)\diff{}B_u^i}\\
		&\le Ch\left[1 + \sup_{s \le t + h} \nnnorm{Z_s^i} + \frac{1}{N}\sum_{j = 1}^N \; \sup_{s \le t + h} \nnnorm{Z_s^j}\right] +  \sup_{t \le s \le t + h}\nnnorm{\int_t^s\sigma(u,X_u^i)\diff{}B_u^i},
	\end{align*}
	where the last inequality is due to the linear growth condition on $b$, see \eqref{eq: UPPER BOUND ON B IN TERMS OF Z AND ITS EMPIRICAL AVERAGE}. Now summing over the particles in $\mathcal{I}^{(2)}(\beta)$ and dividing by $N$, we obtain 
	\begin{align*}
		\frac{\gamma\nnnorm{\mathcal{I}^{(2)}(\beta)}}{N} &\le
		\frac{1}{N} \sum_{i \in \mathcal{I}^{(2)}(\beta)}Ch\left[1 + \sup_{s \le t + h} \nnnorm{Z_s^i} + \frac{1}{N}\sum_{j = 1}^N \; \sup_{s \le t + h} \nnnorm{Z_s^j}\right]\\
		&\qquad + \frac{1}{N}\sum_{i \in \mathcal{I}^{(2)}(\beta)} \sup_{t \le s \le t + h}\nnnorm{\int_t^s\sigma(u,X_u^i)\diff{}B_u^i}.
	\end{align*}
	By using $N\beta \le \nnnorm{\mathcal{I}^{(2)}(\beta)} < N\beta + 1$ and the Cauchy-Schwarz inequality we have
	\begin{align*}
		\gamma\beta &\le
		Ch \left(\frac{\nnnorm{\mathcal{I}^{(2)}(\beta)}}{N}\right)^{1/2}\left(\frac{1}{N} \sum_{i = 1}^N\left[1 + \sup_{s \le t + h} \nnnorm{Z_s^i} + \frac{1}{N}\sum_{j = 1}^N \; \sup_{s \le t + h} \nnnorm{Z_s^j}\right]^2\right)^{1/2} \\
		&\qquad+ \left(\frac{\nnnorm{\mathcal{I}^{(2)}(\beta)}}{N}\right)^{1/2} \left(\frac{1}{N}\sum_{i = 1}^N\sup_{t \le s \le t + h}\nnnorm{\int_t^s\sigma(u,X_u^i)\diff{}B_u^i}^2\right)^{1/2}\\
		&\le 
		Ch \left(\beta + \frac{1}{N}\right)^{1/2}\left(\frac{1}{N} \sum_{i = 1}^N\left[1 + \sup_{s \le t + h} \nnnorm{Z_s^i} + \frac{1}{N}\sum_{j = 1}^N \; \sup_{s \le t + h} \nnnorm{Z_s^j}\right]^2\right)^{1/2} \\
		&\qquad+ \left(\beta + \frac{1}{N}\right)^{1/2} \left(\frac{1}{N}\sum_{i = 1}^N\sup_{t \le s \le t + h}\nnnorm{\int_t^s\sigma(u,X_u^i)\diff{}B_u^i}^2\right)^{1/2}.
	\end{align*}
	As $\beta = h^{1/4} \ge N^{-1/2}$, we have $1/(\beta N) \le N^{-1/2} \le 1$. Therefore, dividing the above by $\beta$, we obtain
	\begin{align*}
		\gamma 
		\le& Ch\beta^{-1/2}\left(\frac{1}{N} \sum_{i = 1}^N\left[1 + \sup_{s \le t + h} \nnnorm{Z_s^i} + \frac{1}{N}\sum_{j = 1}^N \; \sup_{s \le t + h} \nnnorm{Z_s^j}\right]^2\right)^{1/2}\\
		&+ 2 \beta^{-1/2}\left(\frac{1}{N}\sum_{i = 1}^N\sup_{t \le s \le t + h}\nnnorm{\int_t^s\sigma(u,X_u^i)\diff{}B_u^i}^2\right)^{1/2}\\
		\le& Ch\beta^{-1/2}\left(1 + \frac{1}{N}\sum_{i = 1}^N \; \sup \limits_{s \le t + h} \nnnorm{Z_s^i}^2\right)^{1/2} \\
		&+ 2 \beta^{-1/2}\left(\frac{1}{N}\sum_{i = 1}^N\sup_{t \le s \le t + h}\nnnorm{\int_t^s\sigma(u,X_u^i)\diff{}B_u^i}^2\right)^{1/2},
	\end{align*}
	where $C$ becomes larger between inequalities. Hence, by Markov's inequality, we have shown
	\begin{align}
		\begin{split}
			\prob\left[\tau(\beta) \le t+h \right] &\le C^ph^p\gamma^{-p}\beta^{-p/2}\E\left[\left(1 + \frac{1}{N}\sum_{i = 1}^N \; \sup \limits_{s \le t + h} \nnnorm{Z_s^i}^2\right)^{p/2} \right] \\
			&+ 2^p\gamma^{-p}\beta^{-p/2}\E\left[\left(\frac{1}{N}\sum_{i = 1}^N\sup_{t \le s \le t + h}\nnnorm{\int_t^s\sigma(u,X_u^i)\diff{}B_u^i}^2\right)^{p/2}\right].
		\end{split}
	\end{align}
	For $p \le 2$, by Jensen's inequality and \cref{prop: BOUNDS ON THE SUP OF THE P TH MOMENTS OF THE SUP NORM OF Z},
	\begin{equation*}
		\E\left[\left(1 + \frac{1}{N}\sum_{i = 1}^N \; \sup \limits_{s \le t + h} \nnnorm{Z_s^i}^2\right)^{p/2} \right] \le \left(\E\left[1 + \frac{1}{N}\sum_{i = 1}^N \; \sup \limits_{s \le t + h} \nnnorm{Z_s^i}^2\right] \right)^{p/2} \le C.
	\end{equation*}
	By Jensen's inequality and the Burkholder-Davis-Gundy inequality,
	\begin{align*}
		&\E\left[\left(\frac{1}{N}\sum_{i = 1}^N\sup_{t \le s \le t + h}\nnnorm{\int_t^s\sigma(u,X_u^i)\diff{}B_u^i}^2\right)^{p/2}\right] \\
		&\hspace{2cm}\le \left(\E\left[\frac{1}{N}\sum_{i = 1}^N\sup_{t \le s \le t + h}\nnnorm{\int_t^s\sigma(u,X_u^i)\diff{}B_u^i}^2\right]\right)^{p/2} \le Ch^{p/2}.
	\end{align*}
	Similarly, for $p > 2$, by Jensen's inequality,
	\begin{equation*}
		\E\left[\left(1 + \frac{1}{N}\sum_{i = 1}^N \; \sup \limits_{s \le t + h} \nnnorm{Z_s^i}^2\right)^{p/2} \right] \le C_p \E\left[1 + \frac{1}{N}\sum_{i = 1}^N \; \sup \limits_{s \le t + h} \nnnorm{Z_s^i}^{p}\right] \le C.
	\end{equation*}
	By Jensen's and the Burkholder-Davis-Gundy inequality,
	\begin{align*}
		&\E\left[\left(\frac{1}{N}\sum_{i = 1}^N\sup_{t \le s \le t + h}\nnnorm{\int_t^s\sigma(u,X_u^i)\diff{}B_u^i}^2\right)^{p/2}\right] \\
		&\hspace{2cm}\le \E\left[\frac{1}{N}\sum_{i = 1}^N\sup_{t \le s \le t + h}\nnnorm{\int_t^s\sigma(u,X_u^i)\diff{}B_u^i}^p\right] \le Ch^{p/2}.
	\end{align*}
	Therefore,
	\begin{equation*}
		\prob\left[\tau(\beta) \le t+h \right] \le C_p(h^p\beta^{-p/2} + h^{p/2}\beta^{-p/2}) = C_p(h^{7p/8} + h^{3p/8}).
	\end{equation*}
	Now recall $\mathcal{I}^{(2)} = \{i \in \{1,\ldots,N\}\,:\, J_{t+h}^i - J_{t-}^i \ge 2\}$. On the event $\{\tau(\beta) > t + h\} \cap \{N^{-1}\sum_{i=1}^N(J_{t+h}^i)^2 \le h^{-1/8}\}$ we have $\nnnorm{\mathcal{I}^{(2)}} < N\beta$, therefore
	\begin{align*}
		F_{t+h}^N - F_{t-}^N &= \frac{1}{N}\sum_{i = 1}^N (J_{t+h}^i - J_{t-}^i) = \frac{1}{N}\sum_{i \in (\mathcal{I}^{(2)})^\complement} (J_{t+h}^i - J_{t-}^i) +  \frac{1}{N}\sum_{i \in \mathcal{I}^{(2)}} (J_{t+h}^i - J_{t-}^i) \\
		&\le 1 + \left(\frac{\nnnorm{\mathcal{I}^{(2)}}}{N}\right)^{1/2}\left(\frac{1}{N}\sum_{i = 1}^N \left(J_{t+h}^i\right)^2\right)^{1/2}\\
		&\le 1 + \beta^{1/2} \cdot h^{-1/16} = 1 + h^{1/16}.
	\end{align*}
	Hence,
	\begin{align*}
		\prob\left[F_{t+h}^N - F_{t-}^N > 1 + h^{1/16}\right] &\le \prob\left[\tau(\beta) \le t+h \right] + \prob\left[N^{-1}\sum_{i=1}^N(J_{t+h}^i)^2 > h^{-1/8}\right]\\
		&\le C_p(h^{7p/8} + h^{3p/8}+ h^{p/8}).
	\end{align*}
	By first replacing $p$ with $8p$ in the above and then employing the fact that $h \in (0,1)$, the result follows. 
\end{proof}

We also observe that the probability of seeing a large proportion of neurons firing is small for all sufficiently large $N$, provided that the time domain we are considering, $[t,t+h]$, is small enough.

\begin{proposition}\label{prop: PROPOSITION THAT WE CAN CONTROL THE INCREMENTS OF THE FIRING FUNCTION}
	For every $t \in [0,\,T]$ and $\eta > 0$, we have
	\begin{equation*}
		\lim_{h \to  0} \limsup_{N \to  +\infty} \prob\left[F_{(t+h) \wedge T}^N - F_t^N \ge \eta\right] = 0 \quad \textnormal{and} \quad \lim_{h \to  0} \limsup_{N \to  +\infty} \prob\left[F_{t}^N - F_{(t-h) \vee 0}^N \ge \eta\right] = 0
	\end{equation*}
\end{proposition}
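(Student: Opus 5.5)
The plan is to control $F^N_{(t+h)\wedge T}-F^N_t$ by splitting neurons according to where they are at time $t$, and to use the continuous particle $Z^i$ for the travel estimate. Fix $t\in(0,T]$ first; the case $t=0$ is handled at the end. Write
\[
F^N_{t+h}-F^N_t \le \frac1N\sum_{i=1}^N \ind_{\{J^i_{t+h}-J^i_t\ge 1\}} + \frac1N\sum_{i=1}^N (J^i_{t+h}-J^i_t)\ind_{\{J^i_{t+h}-J^i_t\ge 2\}} .
\]
For the second sum I would use Cauchy--Schwarz exactly as at the end of the proof of \cref{prop: CONTROL ON THE PROPORTION OF NEURONS WHICH FIRE TWICE}. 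Step~1 of that proof gives, for $N>h^{-1/2}$, that with probability at least $1-C_ph^p$ the proportion of neurons spiking twice in the window is below $h^{1/4}$. The second moments of $J^i_T$ are bounded by \cref{prop: BOUNDS ON THE SUP OF THE P TH MOMENTS OF THE SUP NORM OF X}, so Markov's inequality shows that $N^{-1}\sum_i (J^i_T)^2\le h^{-1/8}$ fails with probability at most $Ch^{1/8}$. Together these give that the second sum exceeds $h^{1/16}$ with probability $O(h^{1/8})$, uniformly in large $N$.

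For the first sum, fix $\varepsilon=\varepsilon(h)=h^{1/4}$. A neuron that fires in $(t,t+h]$ falls into one of two cases.
\begin{enumerate}[(a)]
\item It is non-refractory at time $t$ with $X^i_t\in(-\varepsilon,0)$.
\item Otherwise, its continuous counterpart satisfies $\sup_{t\le u\le t+h}|Z^i_u-Z^i_t|\ge \varepsilon$.
\end{enumerate}
Case (b) holds because $Z^i$ does not jump at resets: the jump $-\xi$ of $X^i$ is cancelled by the compensating sum. During refractory periods both $X^i$ and $Z^i$ are frozen. Hence a neuron starting from $X^i_t\le-\varepsilon$, or one that is refractory at $t$ and must be reset to a level $\le-\gamma<-\varepsilon$ before firing, can only reach $0$ if $Z^i$ increases by at least $\varepsilon$. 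Case (a) contributes at most $\nu^N_t(-\varepsilon,0)$. By \cref{cor: REGULARITY CONTROL ON THEM EMPIRICAL MEASURES} (second line, with $a=-\varepsilon$, $b=0$) its expectation is $\le Ct^{-\delta/2}\varepsilon^\delta$.

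For case (b), I would estimate $\E\sup_{t\le u\le t+h}|Z^i_u-Z^i_t|^2$ as in Step~1 of \cref{prop: CONTROL ON THE PROPORTION OF NEURONS WHICH FIRE TWICE}. The drift part is bounded via \eqref{eq: UPPER BOUND ON B IN TERMS OF Z AND ITS EMPIRICAL AVERAGE} together with \cref{prop: BOUNDS ON THE SUP OF THE P TH MOMENTS OF THE SUP NORM OF Z}, and the martingale part via Burkholder--Davis--Gundy. This yields $\E\sup|\Delta Z^i|^2\le Ch$, uniformly in $i$ and $N$. By Chebyshev's inequality, the expected fraction of neurons in case (b) is then $\le Ch/\varepsilon^2=Ch^{1/2}$. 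Markov's inequality now gives
\[
\prob\big[F^N_{t+h}-F^N_t\ge\eta\big]\le \frac{C}{\eta}\big(t^{-\delta/2}h^{\delta/4}+h^{1/2}\big)+O(h^{1/8})
\]
for $h^{1/16}<\eta/2$ and $N$ large, which tends to $0$ as $h\to0$.

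The left increment $F^N_t-F^N_{t-h}$ is handled identically with base time $s=t-h\ge t/2$, so that the factor $s^{-\delta/2}\le (t/2)^{-\delta/2}$ stays bounded. At $t=0$ the left claim is trivial. For the right claim at $t=0$, I replace the regularity bound on $\nu^N_0(-\varepsilon,0)$ by $\E\nu^N_0(-\varepsilon,0)=\nu_0(-\varepsilon,0)\le \|V_0\|_{L^2}\varepsilon^{1/2}$, using that $V_0\in L^2$. I expect the main obstacle to be the careful bookkeeping in case (b), namely verifying rigorously that any neuron outside case (a) that fires forces an $\varepsilon$-increment of $Z^i$, in particular across refractory periods and resets.
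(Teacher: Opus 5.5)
Your proof is correct, and for the neurons that spike only once it takes a genuinely simpler route than the paper.

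The double spikes are handled as in the paper's Step~1: Cauchy--Schwarz on the event that fewer than $Nh^{1/4}$ neurons spike twice, plus a moment bound on $N^{-1}\sum_i (J^i_T)^2$.

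The single spikes are where the routes differ. The paper passes to the scale-transformed increments $U^i$ and splits them into drift, idiosyncratic martingale $\tilde I^i$, common-noise martingale $I$ and reset jumps. It then truncates on $\{\sup_{s\le h}|I_s|<a\}$ and on the event that the empirical average of $\sup_{s\le t+h}|Z^j_s|$ stays below $a/2h$. Finally it applies a law of large numbers to the idiosyncratic Brownian parts and the reflection principle, with the tuned scales $\varepsilon(h)=h^{1/2}\log(1/h)$ and $a(h)=h^{1/2}\log\log(1/h)$. Its conclusion is therefore only available after $\limsup_{N\to\infty}$.

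You instead observe that a neuron firing in $(t,t+h]$ that is not in $(-\varepsilon,0)$ at time $t$ forces an $\varepsilon$-increment of the continuous process $Z^i$. Your case analysis is right, including the refractory case via $\xi^i_k\ge\gamma>\varepsilon$ and the freezing of $Z^i$ during refractory periods. You then bound the expected fraction of such neurons by Chebyshev and $\E\sup_{t\le u\le t+h}|Z^i_u-Z^i_t|^2\le Ch$.

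Because the bound on $\E\nu^N_t(-\varepsilon,0)$ is uniform in $N$ and vanishes for any $\varepsilon\to0$, any choice with $\varepsilon(h)\to0$ and $h/\varepsilon(h)^2\to0$ works. So the law of large numbers, and the independence of the idiosyncratic parts it requires, buys nothing in this setting. Your version gives an explicit polynomial rate valid for every $N>h^{-1/2}$, and needs neither the scale transform nor a separate treatment of the common noise.

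You are also more careful than the paper about the base-time factor $t^{-\delta/2}$. Shifting to $s=t-h\ge t/2$ for the left increment, and using $\nu_0(-\varepsilon,0)\le\|V_0\|_{L^2}\varepsilon^{1/2}$ at $t=0$, fills in what the paper leaves as ``follows in the same way''.

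When writing it up:
\begin{itemize}
\item take $h$ small enough that $h^{1/4}<\gamma$;
\item cap the window at $T$ when $t+h>T$;
\item for the interval $(-\varepsilon,0)$, cite \cref{prop: PROPOSTION SHOWING THAT WE HAVE AN UPPER BOUND ON THE PROBABILITYS OF XTI TAKING VALUES IN SOME SET} or \cref{prop: BOUNDARY DECAY OF THE EMPIRICAL MEAUSRES IN EXPECTATION} rather than the second line of the regularity corollary, which is stated for $b<0$.
\end{itemize}
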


\begin{proof}
	
	By Proposition \ref{prop: CONTROL ON THE PROPORTION OF NEURONS WHICH FIRE TWICE}, the probability that a large proportion of neurons fire at least twice in a small time interval is small, uniformly for all $N$ large. Hence, we would like to gain some control over the number of neurons that fire provided a small proportion of them fire twice. As the $p$-th moments of the number of firings are uniformly bounded in $N$, we may exploit this using Markov's inequality to control the contribution to $F^N$ by the neurons that fire twice or more. Lastly, to control the neurons that fire at most once, we employ the techniques in \citep[Appendix~A.2]{hambly2019spde}.\\
	
	\noindent \underline{Step 1:}\\[1 ex]
	Choose $h < 1,\, t \in [0,T]$, let $N > h^{-1/2}$ and $\beta = h^{1/4}$. As in Proposition \ref{prop: CONTROL ON THE PROPORTION OF NEURONS WHICH FIRE TWICE}, we define the set $\mathcal{I}^{(2)} = \{i \in \{1,\ldots,N\}\,:\, J_{t+h}^i - J_{t-}^i \ge 2\}$, i.e. the set of particles which jump twice on the interval $[t,t+h]$. Choose a deterministic set $\mathcal{I}_0 \subset \{1,\ldots,N\}$. On the event $A(\mathcal{I}_0)\coloneqq \{\mathcal{I}^{(2)} = \mathcal{I}_0\} \cap \{\nnnorm{\mathcal{I}^{(2)}} < N\beta\}$, we have
	\begin{align*}
		F_{t+h}^N - F_t^N &= \frac{1}{N} \sum_{i = 1}^N J_{t+h}^i - J_{t}^i  = \frac{1}{N} \sum_{i \in \mathcal{I}_0} J_{t+h}^i - J_{t}^i + \frac{1}{N} \sum_{i \in \mathcal{I}_0^\complement} J_{t+h}^i - J_{t}^i\\
		&\le \left(\frac{\nnnorm{\mathcal{I}_0}}{N}\right)^{1/2}\left(\frac{1}{N}\sum_{i \in \mathcal{I}_0} \left(J_{t+h}^i\right)^2\right)^{1/2}
		+ \frac{1}{N} \sum_{i \in \mathcal{I}_0^\complement} \left(J_{t+h}^i - J_{t}^i\right)\ind_{\{J_{t+h}^i - J_{t-}^i \le 1\}}\\
		&\le \beta^{1/2}\left(\frac{1}{N}\sum_{i = 1}^N \left(J_{t+h}^i\right)^2\right)^{1/2}
		+ \frac{1}{N} \sum_{i = 1}^N \left(J_{t+h}^i - J_{t}^i\right)\ind_{\{J_{t+h}^i - J_{t}^i \le 1\}}\\
	\end{align*}
	where we used the fact that $J_{t+h}^i - J_{t}^i \le J_{t+h}^i - J_{t-}^i$ to drop the dependence on $t-$ in the indicator function in the last inequality. Therefore, by the above and the law of total probability
	{\small \begin{align*}
			&\prob\left[F_{t+h}^N - F_t^N \ge 2\eta, \nnnorm{\mathcal{I}^{(2)}} < N\beta\right]\\
			&= \sum_{\mathcal{I}_0 \subset \{1,\ldots,N\}} \prob\left[F_{t+h}^N - F_t^N \ge 2\eta, A(\mathcal{I}_0) \right]\\
			&\le \sum_{\mathcal{I}_0 \subset \{1,\ldots,N\}} \prob\left[\beta^{1/2}\left(\frac{1}{N}\sum_{i = 1}^N \left(J_{t+h}^i\right)^2\right)^{1/2}
			+ \frac{1}{N} \sum_{i = 1}^N \left(J_{t+h}^i - J_{t}^i\right)\ind_{\{J_{t+h}^i - J_{t}^i \le 1\}} \ge 2\eta, A(\mathcal{I}_0) \right]\\
			&\le \sum_{\mathcal{I}_0 \subset \{1,\ldots,N\}} \prob\left[\beta^{1/2}\left(\frac{1}{N}\sum_{i = 1}^N \left(J_{t+h}^i\right)^2\right)^{1/2} \ge \eta,A(\mathcal{I}_0) \right] \\
			&\hspace{1cm}+\sum_{\mathcal{I}_0 \subset \{1,\ldots,N\}} \prob\left[\frac{1}{N} \sum_{i = 1}^N \left(J_{t+h}^i - J_{t}^i\right)\ind_{\{J_{t+h}^i - J_{t}^i \le 1\}} \ge \eta, A(\mathcal{I}_0) \right]\\
			&\le \frac{\beta^{1/2}}{\eta}\E\left[\left(\frac{1}{N}\sum_{i = 1}^N \left(J_{t+h}^i\right)^2\right)^{1/2}\right]
			+\prob\left[\frac{1}{N} \sum_{i = 1}^N \left(J_{t+h}^i - J_{t}^i\right)\ind_{\{J_{t+h}^i - J_{t}^i \le 1\}} \ge \eta \right]
	\end{align*}}
	where the first term is bounded uniformly in $N$ by Jensen's inequality and \cref{prop: BOUNDS ON THE SUP OF THE P TH MOMENTS OF THE SUP NORM OF Z}.\\
	
	\noindent\underline{Step 2:}\\[1 ex]
	We now look to analyse $\prob\left[\frac{1}{N} \sum_{i = 1}^N \left(J_{t+h}^i - J_{t}^i\right)\ind_{\{J_{t+h}^i - J_{t}^i \le 1\}} \ge \eta \right]$ obtained in step 1. The rest of this proof shall follow similarly as in \citep[Appendix~A.2]{hambly2019spde}. By fixing an $\varepsilon = \varepsilon(h)$ and $a = a(h)$, to be specified later, such that $\varepsilon(h) \to  0$ and $a(h)/h \to  \infty$ as $h \to  0$, then by Markov's inequality, Corollary \ref{cor: REGULARITY CONTROL ON THEM EMPIRICAL MEASURES}, and the uniform in $N$ sub-Gaussianity of the particles, we have
	\begin{align*}
		\prob\left[\nu_t^N(-\varepsilon,\,0) \ge \eta/4\right] &= o(1) \qquad \text{as } h \to  0, \\
		\prob\left[\frac{1}{N}\sum_{i = 1}^N \ind_{\sup_{s \le t + h} \nnnorm{Z_{s}^i} \ge \frac{a}{2h}} \ge \eta/4\right] &= o(1) \qquad \text{as } h \to  0,
	\end{align*}
	uniformly in $N$. Hence, we now look to control
	{\small \begin{equation}\label{eq: THE MAIN OBJECT OR PROBABILITY WE LOOK TO HAVE A CONTROL ON IN H IN THE PROOF THAT WE HAVE THE FIRING FUNCTION BEING TIGHT}
			\prob\left[\frac{1}{N} \sum_{i = 1}^N \left(J_{t+h}^i - J_{t}^i\right)\ind_{\{J_{t+h}^i - J_{t}^i \le 1\}} \ge \eta,\, \nu_t^N(-\varepsilon,0) < \eta/4,  \frac{1}{N}\sum_{i = 1}^N \ind_{\sup_{s \le t +  h} \nnnorm{Z_{s}^i} \ge \frac{a}{2h}} < \eta/4 \right].
	\end{equation}}
	First, we let $E$ denote the event in \eqref{eq: THE MAIN OBJECT OR PROBABILITY WE LOOK TO HAVE A CONTROL ON IN H IN THE PROOF THAT WE HAVE THE FIRING FUNCTION BEING TIGHT}, and we define two sets:
	\begin{align*}
		\mathcal{I} &= \left\{i \in \{1,\ldots,N\}\,;\, X_t^{i} \le -\varepsilon,\sup_{s \le t + h} \nnnorm{Z_{s}^{i}} < \frac{a}{2h}\right\} \qquad \textnormal{and} \\
		\mathcal{I}^{{\textnormal{ref}}} &= \left\{i \in \{1,\ldots,N\}\,;\, X_t^{i} = 0,\sup_{s \le t + h} \nnnorm{Z_{s}^{i}} < \frac{a}{2h}\right\}.
	\end{align*}
	$\mathcal{I}$ is the set of indices of the particles that are not too close to the boundary but also not too far away. $\mathcal{I}^{{\textnormal{ref}}}$ represents the set of indices of the particles that are in their refractory period at time $t$. We observe that by the countable additivity of probability measures
	\begin{equation}\label{eq: THE MAIN OBJECT OR PROBABILITY WE LOOK TO HAVE A CONTROL ON IN H IN THE PROOF THAT WE HAVE THE FIRING FUNCTION BEING TIGHT 1}
		\prob\left[E\right] \le \sum_{\substack{\mathcal{I}_0,\mathcal{I}_0^{{\textnormal{ref}}}\subset \{1,\ldots,N\} \\ \mathcal{I}_0 \cap \mathcal{I}_0^{\textnormal{ref}} = \emptyset}}\prob\left[E \cap \{\mathcal{I} = \mathcal{I}_0\} \cap \{\mathcal{I}^{\textnormal{ref}} = \mathcal{I}_0^{\textnormal{ref}}\} \right].
	\end{equation}
	We observe that $E \cap \{\mathcal{I} = \mathcal{I}_0\} \cap \{\mathcal{I}^{\textnormal{ref}} = \mathcal{I}_0^{\textnormal{ref}}\}$ is a subset of the event
	{\small\begin{equation}\label{eq: THE MAIN OBJECT OR PROBABILITY WE LOOK TO HAVE A CONTROL ON IN H IN THE PROOF THAT WE HAVE THE FIRING FUNCTION BEING TIGHT 2}
			\begin{aligned}
				&E \cap \{\mathcal{I} = \mathcal{I}_0\} \cap \{\mathcal{I}^{\textnormal{ref}} = \mathcal{I}_0^{\textnormal{ref}}\} \cap \Bigr\{\#\{i \in \mathcal{I}_0\,:\,\sup_{t\le s\le t + h}X_s^i \ge 0,X_t^i\le -\varepsilon\} \ge N\eta/4\Bigl\}\\
				&\bigcup E \cap \{\mathcal{I} = \mathcal{I}_0\} \cap \{\mathcal{I}^{\textnormal{ref}} = \mathcal{I}_0^{\textnormal{ref}}\} \cap \Bigr\{\#\{i \in \mathcal{I}_0^{\textnormal{ref}}\,:\,\sup_{\hat{\tau}^i\le s\le t + h}X_s^i \ge 0, \hat{\tau}^i \le t + h\} \ge N\eta/4\Bigl\},\\
			\end{aligned}
	\end{equation}}
	where $\hat{\tau}^i$ is the time that particle $i$ leaves its refractory period, i.e.
	\begin{equation*}
		\hat{\tau}^i \coloneqq \inf\{s \in [t,t+h]\,;\, X_s^i < 0 \} \qquad \textnormal{with} \qquad \inf\emptyset = \infty.
	\end{equation*}
	
	Therefore, replacing the event \eqref{eq: THE MAIN OBJECT OR PROBABILITY WE LOOK TO HAVE A CONTROL ON IN H IN THE PROOF THAT WE HAVE THE FIRING FUNCTION BEING TIGHT 2} into \eqref{eq: THE MAIN OBJECT OR PROBABILITY WE LOOK TO HAVE A CONTROL ON IN H IN THE PROOF THAT WE HAVE THE FIRING FUNCTION BEING TIGHT 1} and employing the subadditivity of the probability measures, we deduce
	\begin{align*}
		\prob\left[E\right] &\le \sum_{{\mathcal{I}_0 \subset \{1,\ldots,N\}}} \prob\left[\left.\#\{i \in \mathcal{I}_0\,:\,\sup_{t\le s\le t + h}X_s^i \ge 0,X_t^i\le -\varepsilon\} \ge N\eta/4 \right|\mathcal{I} = \mathcal{I}_0\right]\\
		&\hspace{2.5cm}\times\prob\left[\mathcal{I} = \mathcal{I}_0\right]\\
		&+ \sum_{{\mathcal{I}_0^{\textnormal{ref}} \subset \{1,\ldots,N\}}}\prob\left[\left.\#\{i \in \mathcal{I}_0^{\textnormal{ref}}\,:\,\sup_{\hat{\tau}^i\le s\le t + h}X_s^i \ge 0, \hat{\tau}^i \le t + h\} \ge N\eta/4 \right|\mathcal{I}^{\textnormal{ref}} = \mathcal{I}_0^{\textnormal{ref}}\right]\\
		&\hspace{2.5cm}\times\prob\left[\mathcal{I}^{\textnormal{ref}} = \mathcal{I}_0^{\textnormal{ref}}\right].
	\end{align*}
	Now, recall that particles in their refractory period are reset to at least $\gamma$ away from the boundary at zero by \cref{ass: ASSUMPTIONS FROM SOJMARK SPDE PAPER APPLIED TO THE NEUROSCIENCE SETTING} \eqref{ass: ASSUMPTIONS FROM SOJMARK SPDE PAPER APPLIED TO THE NEUROSCIENCE SETTING FOUR}. Since $\varepsilon$ will tend to zero, we can assume $\varepsilon < \gamma$, and we can then deduce that
	\begin{equation}\label{eq: THE MAIN OBJECT OR PROBABILITY WE LOOK TO HAVE A CONTROL ON IN H IN THE PROOF THAT WE HAVE THE FIRING FUNCTION BEING TIGHT 3}
		\begin{aligned}
			\prob\left[E\right] &
			\le \sum_{\substack{\mathcal{I}_0 \subset \{1,\ldots,N\}}} \prob\left[\left.\#\{i \in \mathcal{I}_0\,:\,\sup_{t\le s\le t + h}\left\{X_s^{i} - X_t^{i}\right\} \ge \varepsilon\} \ge N\eta/4 \right|\mathcal{I} = \mathcal{I}_0\right]\\
			&\hspace{2.5cm}\times\prob\left[\mathcal{I} = \mathcal{I}_0\right]\\
			&+ \sum_{{\mathcal{I}_0^{\textnormal{ref}} \subset \{1,\ldots,N\}}} \prob\left[\left.\#\{i \in \mathcal{I}_0^{\textnormal{ref}}\,:\,\sup_{\hat{\tau}^i\le s\le t + h}\left\{X_s^{i} - X_{\hat{\tau}^i}^{i}\right\} \ge \varepsilon\} \ge N\eta/4 \right|\mathcal{I}^{\textnormal{ref}} = \mathcal{I}_0^{\textnormal{ref}}\right]\\
			&\hspace{2.5cm}\times\prob\left[\mathcal{I}^{\textnormal{ref}} = \mathcal{I}_0^{\textnormal{ref}}\right].
		\end{aligned}
	\end{equation}
	Now we will look to control each of the terms in \eqref{eq: THE MAIN OBJECT OR PROBABILITY WE LOOK TO HAVE A CONTROL ON IN H IN THE PROOF THAT WE HAVE THE FIRING FUNCTION BEING TIGHT 3} individually. First turning our attention to the first term on the R.H.S. of \eqref{eq: THE MAIN OBJECT OR PROBABILITY WE LOOK TO HAVE A CONTROL ON IN H IN THE PROOF THAT WE HAVE THE FIRING FUNCTION BEING TIGHT 3}, we shall apply the scale transform as in \cref{prop: SCALE TRANSFORM LEMMA } to remove the general diffusive term in front of the Brownian motion. We introduce $U_s^i := \Upsilon_{t+s}(X_{t+s}^i) - \Upsilon_{t+s}(X_t^i)$ and note that, arguing as in \cref{prop: SCALE TRANSFORM LEMMA }, we get
	\begin{align*}
		\diff{}U_s^i &= u_s^i\diff{s} + \ind_{\{X_{t+s}^i < 0\}}\sqrt{1 - \rho_{t+s}^2} \diff{}W_{t+s}^i + \ind_{\{X_{t+s}^i < 0\}}\rho_{t+s}\diff{}W_{t+s}^0\\
		&\qquad + \diff\sum_{k \ge 1} \Upsilon_{\tau_k^i + \varsigma_k^i}(-\xi_k^i)\ind_{(t,t+s]}(\tau_k^i + \varsigma_k^i) \\
		&\eqqcolon u_s^i\diff{s} + \diff{\tilde{I}_s^i}  + \diff{I_s} + \diff\sum_{k \ge 1} \Upsilon_{\tau_k^i + \varsigma_k^i}(-\xi_k^i)\ind_{(t,t+s]}(\tau_k^i + \varsigma_k^i)
	\end{align*}
	where $\rho_{t + s} = \rho({t+s},\nu_{t+s}^N,\mathfrak{f}_{t+s}^N)$ and the drift satisfies $\nnnorm{u_s^i} \le c_1(1 + \hat{\Lambda}_{t+s}^i)$. By the bound on $\sigma$, we have that
	\begin{equation*}
		X_{t+s}^i - X_{t}^i \ge \varepsilon \implies U_s^i = \int_{X_t^i}^{X^i_{t+s}}\sigma(t+s,y)^{-1}\diff y \ge \varepsilon C_\sigma^{-1} \ge c_2\varepsilon,
	\end{equation*}
	where the last inequality holds for any $C_\sigma^{-1} \ge c_2 > 0$. Therefore,
	\begin{equation*}
		\sup_{t\le s\le t + h}\left\{X_s^i - X_t^i\right\} \ge \varepsilon \implies \sup_{s\le h} U_s^i \ge c_2\varepsilon.
	\end{equation*}
	It follows that we can bound the first term on the R.H.S. of \eqref{eq: THE MAIN OBJECT OR PROBABILITY WE LOOK TO HAVE A CONTROL ON IN H IN THE PROOF THAT WE HAVE THE FIRING FUNCTION BEING TIGHT 3} by
	\begin{equation*}
		\sum_{\substack{\mathcal{I}_0 \subset \{1,\ldots,N\}}} \prob\left[\left.\#\{i \in \mathcal{I}_0\,:\,\sup_{s\le h}\left\{U_s^i\right\} \ge c_2\varepsilon\} \ge N\eta/4 \right|\mathcal{I} = \mathcal{I}_0\right] \prob\left[\mathcal{I} = \mathcal{I}_0\right]
	\end{equation*}
	Using the decomposition of $U_s^i$, the fact that $-\sum_{k \ge 1} \Upsilon_{\tau_k^i + \varsigma_k^i}(-\xi_k^i)\ind_{(t,t+s]}(\tau_k^i + \varsigma_k^i) \ge 0$, and the growth estimate on the drift $u_s^i$, 
	\begin{align*}
		\tilde{I}_s^i &= U_s^i - \int_0^s u_v^i \diff{v} - I_s -\sum_{k \ge 1} \Upsilon_{\tau_k^i + \varsigma_k^i}(-\xi_k^i)\ind_{(t,t+s]}(\tau_k^i + \varsigma_k^i)
		\ge U_s^i - h \sup_{s \le h} \nnnorm{u_s^i} - \sup_{s \le h} \nnnorm{I_s}\\
		&\ge U_s^i - c_1h(1 + \hat{\Lambda}_{t + h}^{i}) - \sup_{s \le h} \nnnorm{I_s}
		\ge U_s^i - c_1h(1 + \sup_{s \le t + h} \nnnorm{Z_{s}^i} + a/2h) - a
	\end{align*}
	on the event
	\begin{equation*}
		\left\{\frac{1}{N}\sum_{j = 1}^N \sup_{s \le t + h} \nnnorm{Z_{s}^j} < \frac{a}{2h}\right\} \cap \left\{\sup_{s \le h}\nnnorm{I_s} < a\right\}.
	\end{equation*}
	Hence, on the same event, if $i \in \mathcal{I}_0$ such that $\sup_{s\le h} U_s^i \ge c_2\varepsilon$ then
	\begin{equation*}
		\sup_{s\le h} \tilde{I}_s^i \ge \sup_{s\le h} U_s^i - c_1h(1 + a/h) - a \ge c_2\varepsilon - c_1h(1 + a/h) - a = c_2\varepsilon - c_1h - a(1 + c_1).
	\end{equation*}
	Therefore, by splitting up the probability on this event and its complement and using the fact that the largest set $\mathcal{I}_0$ may be is $\{1,\,\ldots,\,N\}$, it follows that we can bound the first term on the R.H.S. of \eqref{eq: THE MAIN OBJECT OR PROBABILITY WE LOOK TO HAVE A CONTROL ON IN H IN THE PROOF THAT WE HAVE THE FIRING FUNCTION BEING TIGHT 3} by
	\begin{align*}
		&\prob\left[\#\{i \le N\,:\,\sup_{s\le h} \tilde{I}_s^i \ge c_2\varepsilon - c_1h - a(1 + c_1)\} \ge N\eta/4 \right] +\prob\left[\frac{1}{N}\sum_{j = 1}^N \sup_{s \le t + h} \nnnorm{Z_{s}^j} \ge \frac{a}{2h}\right] \\
		&+ \prob\left[\sup_{s \le h}\nnnorm{I_s} > a\right]. 
	\end{align*}
	By Markov's inequality then Burkholder-Davis-Gundy,
	\begin{equation*}
		\prob\left[\sup_{s \le h}\nnnorm{I_s} > a\right] \le a^{-2}\E\left[\sup_{s \le h}I_s^2\right] \lesssim ha^{-2}.
	\end{equation*}
	Also, by Markov's inequality and uniform sub-Gaussianity of $Z^{i}$, we have
	\begin{equation*}
		\prob\left[\frac{1}{N}\sum_{j = 1}^N \sup_{s \le t + h} \nnnorm{Z_{s}^j} \ge \frac{a}{2h}\right] \lesssim ha^{-1},
	\end{equation*}
	with a constant that holds uniformly in $N$. Hence, as $\tilde{I}^i$ is a time-changed Brownian motion, we have 
	\begin{align*}
		\prob\left[E\right] 
		&\le \prob\left[\#\{i \le N\,:\,\sup_{s\le h} {B}_s^i \ge c_2\varepsilon - c_1h - a(1 + c_1)\} \ge N\eta/4 \right] + O(ha^{-2} + ha^{-1})\\
		&+ \sum_{{\mathcal{I}_0^{\textnormal{ref}} \subset \{1,\ldots,N\}}} \prob\left[\left.\#\{i \in \mathcal{I}_0^{\textnormal{ref}}\,:\,\sup_{\hat{\tau}^i\le s\le t + h}\left\{X_s^{i} - X_{\hat{\tau}^i}^{i}\right\} \ge \varepsilon\} \ge N\eta/4 \right|\mathcal{I}^{\textnormal{ref}} = \mathcal{I}_0^{\textnormal{ref}}\right]\\
		&\hspace{2.5cm}\times\prob\left[\mathcal{I}^{\textnormal{ref}} = \mathcal{I}_0^{\textnormal{ref}}\right]\\
		&= \prob\left[\frac{1}{N} \sum_{i = 1}^N\ind_{\{\sup_{s\le h} {B}_s^i \ge c_2\varepsilon - c_1h - a(1 + c_1)\}} \ge \eta/4 \right] + O(ha^{-2} + ha^{-1})\\
		&+ \sum_{{\mathcal{I}_0^{\textnormal{ref}} \subset \{1,\ldots,N\}}} \prob\left[\left.\#\{i \in \mathcal{I}_0^{\textnormal{ref}}\,:\,\sup_{\hat{\tau}^i\le s\le t + h}\left\{X_s^{i} - X_{\hat{\tau}^i}^{i}\right\} \ge \varepsilon\} \ge N\eta/4 \right|\mathcal{I}^{\textnormal{ref}} = \mathcal{I}_0^{\textnormal{ref}}\right]\\
		&\hspace{2.5cm}\times\prob\left[\mathcal{I}^{\textnormal{ref}} = \mathcal{I}_0^{\textnormal{ref}}\right]
	\end{align*}
	We now turn our attention to the second term on the R.H.S. of \eqref{eq: THE MAIN OBJECT OR PROBABILITY WE LOOK TO HAVE A CONTROL ON IN H IN THE PROOF THAT WE HAVE THE FIRING FUNCTION BEING TIGHT 3}. We note that the same argument as above can be applied to this term. Therefore, we have
	\begin{align*}
		\prob\left[E\right] 
		\le& \prob\left[\frac{1}{N} \sum_{i = 1}^N\ind_{\{\sup_{s\le h} {B}_s^i \ge c_2\varepsilon - c_1h - a(1 + c_1)\}} \ge \eta/4 \right] + O(ha^{-2} + ha^{-1})\\
		&+ \prob\left[\frac{1}{N} \sum_{i = 1}^N\ind_{\{\sup_{s\le h} {B}_s^i \ge c_2\varepsilon - c_1h - a(1 + c_1)\}} \ge \eta/4 \right]
	\end{align*}
	Then, the law of large numbers and the distribution of the maximum of a Brownian motion gives 
	\begin{equation*}
		\limsup_{N \to \infty} \prob\left[E\right] \le 2\ind_{\{2\phi(-c_2\varepsilon h^{-1/2}+ c_1h^{1/2} + ah^{-1/2}(1 + c_1)) \ge \eta/4\}} + O(ha^{-2} + ha^{-1})
	\end{equation*}
	provided $c_2\varepsilon - c_1h - a(1 + c_1) > 0 $, where $\phi$ is the normal cdf. Making the choice
	\begin{equation*}
		\varepsilon(h) = h^{1/2}\log(1/h) \quad \text{and} \quad a(h) = h^{1/2}\log\log(1/h)
	\end{equation*}
	ensures that 
	\begin{equation*}
		\varepsilon(h) \to  0, \qquad ha^{-1} \to  0, \qquad ha^{-2} \to  0,
	\end{equation*}
	and
	\begin{equation*}
		-c_2\varepsilon h^{-1/2}+ c_1h^{1/2} + ah^{-1/2}(1 + c_1) = -c_2\log(1/h) + c_1h^{1/2} + \log\log(1/h)(1 + c_1) \to  -\infty
	\end{equation*}
	as $h \to  0$.
	
	\underline{Step 3:}
	Bringing it all together, we have shown that, for $h$ small enough,
	\begin{align*}
		&\limsup_{N \to  \infty} \prob\left[F_{t+h}^N - F_t^N \ge 2\eta\right] \\
		&\hspace{3cm}\le \limsup_{N \to  \infty} \prob\left[F_{t+h}^N - F_t^N \ge 2\eta, \nnnorm{\mathcal{I}^{(2)}} < N\beta\right] + \prob\left[\nnnorm{\mathcal{I}^{(2)}} \ge N\beta\right]\\
		&\hspace{3cm}\lesssim {h^{1/8}}{\eta^{-1}} +2\ind_{\{2\phi(-c_2\varepsilon h^{-1/2}+ c_1h^{1/2} + ah^{-1/2}(1 + c_1)) \ge \eta/4\}} + ha^{-2} + ha^{-1}\\
		&\hspace{3cm}+{4}{\eta^{-1}}\limsup_{N \to  \infty}\E\nu_t^N(-\varepsilon,\,0) + h,
	\end{align*}
	where the right-hand side tends to $0$ as $h$ tends to $0$. The control on the left increment $F_{t}^N - F^N_{t-h}$ follows in the same way, so the proof is complete.
\end{proof}

\section{Regularity Properties of Limit Points}

Let $\mathscr{S}$ denote the space of Schwartz functions on $\mathbb{R}$, that is, the space of rapidly decreasing infinitely differentiable functions, and let $\mathscr{S}^\prime$ denote its dual, the space of tempered distributions. We write $(D_{\mathscr{S}^\prime},M_1)$ for the space of $\mathscr{S}^\prime$-valued \cadlag processes on $[0,T]$, equipped with the $M_1$-topology as constructed in \citep{ledger2016skorokhod}. The $M_1$-topology is useful for our setting as monotone real-valued functions are automatically tight in this topology. With this in mind, and with some work, one can show that the empirical measure of the particle system is tight in this space and has a limit point in $(D_{\mathscr{S}^\prime},M_1)$. Hence we can use the properties and characteristics of the particle system to deduce properties of the limiting one, which is done in the two propositions below.

\begin{proposition}\label{lem: BOUNDS ON THE TAIL MOMENTS OF THE EMPIRICAL MEASURES AND THE LIMITNG MEASURE}
	Suppose $\nu^N$ converges weakly towards $\nu^*$ on $(D_{\mathscr{S}^\prime},M_1)$. Then, for every $a > 0$, we have uniformly in $N \ge 1$ and $t \in [0,T]$ that 
	\begin{align}
		\E{\langle\nu_t^N,\nnorm{x}^k\1_{(-\infty,-\lambda]}(x)\rangle} &= o(\lambda^ke^{-a\lambda}), \label{eq:tail_moment_pointwise}\\
		\E{\int_0^T\langle\nu_t^N,\nnorm{x}^k\1_{(-\infty,-\lambda]}(x)\rangle\diff t} &= o(\lambda^ke^{-a\lambda}), \label{eq:tail_moment_integrated_empirical}\\
		\E{\int_0^T\langle\nu_t^*,\nnorm{x}^k\1_{(-\infty,-\lambda]}(x)\rangle\diff t} &= o(\lambda^ke^{-a\lambda}). \label{eq:tail_moment_integrated_limit}
	\end{align}
\end{proposition}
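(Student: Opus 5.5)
The plan is to derive all three bounds from the uniform sub-Gaussianity in \cref{cor: SUBGUASSIANTY COROLLARY}, or more precisely from \cref{prop: UNIFORM BOUNDS ON THE EXPONENTIAL OF GAMMA HAT T}. The passage to the limit $\nu^*$ will then use weak convergence together with a lower semicontinuity (Fatou-type) argument.

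\emph{Pointwise bound \eqref{eq:tail_moment_pointwise}.} By exchangeability,
\[
\E\langle\nu_t^N,\nnorm{x}^k\1_{(-\infty,-\lambda]}\rangle \le \frac1N\sum_{i=1}^N\E\big[\nnorm{X_t^i}^k\1_{\{X_t^i\le-\lambda\}}\big].
\]
Fix $\delta>0$ as in \cref{prop: UNIFORM BOUNDS ON THE EXPONENTIAL OF GAMMA HAT T}, so that $\E[e^{\delta\sup_{s\le T}\nnorm{X_s^i}^2}]\le C$ uniformly in $i$, $N$ and $t$. Here we use $\nnorm{X_s^i}\le R+2\sup_{u\le s}\nnorm{Z_u^i}$, after shrinking $\delta$ if needed. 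Since $x\mapsto x^ke^{-\delta x^2/2}$ is decreasing for large $x$, for $\lambda$ large we get
\[
\E\big[\nnorm{X_t^i}^k\1_{\{\nnorm{X_t^i}\ge\lambda\}}\big]\le \lambda^k e^{-\delta\lambda^2/2}\,\E\big[e^{\delta\nnorm{X_t^i}^2}\big]\le C\lambda^ke^{-\delta\lambda^2/2}.
\]
This bound is $o(\lambda^ke^{-a\lambda})$ for every $a>0$, because $e^{-\delta\lambda^2/2+a\lambda}\to0$.

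\emph{Integrated bound \eqref{eq:tail_moment_integrated_empirical}.} This follows by integrating the previous bound over $[0,T]$ with Fubini; the uniform bound costs only a factor of $T$.

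\emph{Limit bound \eqref{eq:tail_moment_integrated_limit}.} This is the main obstacle, since $M_1$-convergence in $D_{\mathscr{S}'}$ gives neither pointwise-in-time convergence of $\nu^N_t$ nor convergence against the non-smooth test function $\nnorm{x}^k\1_{(-\infty,-\lambda]}$. I would proceed in four steps.

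First, by Skorokhod representation, assume $\nu^N\to\nu^*$ almost surely in $(D_{\mathscr{S}'},M_1)$. $M_1$-convergence gives $\langle\nu^N_t,\phi\rangle\to\langle\nu^*_t,\phi\rangle$ for each Schwartz $\phi$ at all continuity points $t$ of the limit, which is Lebesgue-a.e.\ $t$. Alternatively, one can note that the map $\nu\mapsto\int_0^T\langle\nu_t,\phi\rangle\,\diff t$ is $M_1$-continuous.

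Second, choose smooth nonnegative Schwartz functions $\phi_m$ that increase pointwise to $\nnorm{x}^k\1_{(-\infty,-\lambda)}$. For instance, take $\phi_m=\nnorm{x}^k\chi_m(x)$ with a cutoff $\chi_m$ supported in $(-m,-\lambda)$, smoothed at both ends.

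Third, for each fixed $m$, Fatou's lemma gives
\[
\E\int_0^T\langle\nu^*_t,\phi_m\rangle\,\diff t\le\liminf_N\E\int_0^T\langle\nu^N_t,\phi_m\rangle\,\diff t\le \sup_N\E\int_0^T\langle\nu^N_t,\nnorm{x}^k\1_{(-\infty,-\lambda]}\rangle\,\diff t.
\]
The uniform integrability needed here holds because $\phi_m$ is bounded and the total mass of $\nu^N_t$ is at most $1$.

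Fourth, let $m\to\infty$ by monotone convergence. To handle the closed endpoint $-\lambda$, apply the bound with $\lambda$ replaced by $\lambda-1$ (or $\lambda/2$). This still gives $o(\lambda^ke^{-a\lambda})$, since the empirical bound is Gaussian in $\lambda$ and absorbs any such shift.

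A remaining point is that $\nu^*_t$ must be shown to be a positive measure, rather than just a tempered distribution, so that monotone convergence makes sense. Positivity follows from $\langle\nu^*_t,\phi\rangle\ge0$ for all $\phi\ge0$, which passes to the limit along the a.e.\ convergence. A distribution that is nonnegative on nonnegative test functions is a Radon measure, and its mass is at most $1$ by the same limit argument.
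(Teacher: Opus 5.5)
Your proposal is correct and is essentially the argument the paper defers to, namely \citep[Lemma~A.3]{hambly2019spde} adapted to this setting. Uniform sub-Gaussian tails of the $X^i$ give the bounds for $\nu^N$; for $\nu^*$ you approximate $\nnorm{x}^k\1_{(-\infty,-\lambda)}$ from below by smooth compactly supported test functions, pass to the limit via the $M_1$-continuity of $\nu\mapsto\int_0^T\langle\nu_t,\phi\rangle\,\diff t$ together with Portmanteau/Fatou, and finish by monotone convergence, just as the paper itself does in the subsequent proposition on limit points (minor: your first display needs only linearity rather than exchangeability, and Fatou alone suffices in your third step).
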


\begin{proof}
	We are in an analogous setting to that of \citep{hambly2019spde}. The proof follows along the same lines as \citep[Lemma~A.3]{hambly2019spde} with the obvious changes.
\end{proof}

\begin{proposition}\label{lem: LEMMA ON CADLAG-NESS OF LIMIT POINT AND BOUNDNESS OF THE SUP}
	Suppose $\nu^N$ converges weakly towards $\nu^*$ on $(D_{\mathscr{S}^\prime},M_1)$. Then for any polynomial $f$, we have that $t \mapsto \langle \nu^*_t, f\rangle$ has paths in $D_\R$, the space of $\R$-valued \cadlag processes, almost surely. Furthermore, $\E\sup_{t \le T} \langle\nu_t^*,\nnnorm{x}^p\ind_{\{\nnnorm{x} > \lambda\}}\rangle = o(e^{-a\lambda})$ as $\lambda \to  \infty$, for any $a > 0$.
\end{proposition}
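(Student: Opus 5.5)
The plan has two parts: (a) transfer uniform-in-$N$ sup-moment bounds from the particle system to $\nu^*$, and (b) deduce the càdlag property from the structure of $\langle\nu^*_t,f\rangle$ and the $M_1$ convergence. By Skorokhod's representation theorem, we may assume that $\nu^N\to\nu^*$ almost surely in $(D_{\mathscr{S}'},M_1)$.

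\emph{Step 1 (uniform tail bound for the particles).} For $p\ge1$ and $\lambda>0$, bound
\[
\sup_{t\le T}\langle\nu^N_t,|x|^p\ind_{\{|x|>\lambda\}}\rangle\le \frac1N\sum_{i=1}^N \sup_{t\le T}|X^i_t|^p\ind_{\{\sup_{t\le T}|X^i_t|>\lambda\}}.
\]
The expectation of the right-hand side is controlled by Cauchy--Schwarz. The moment factor is bounded by \cref{prop: BOUNDS ON THE SUP OF THE P TH MOMENTS OF THE SUP NORM OF X}. For the probability factor, recall that $\sup_{t\le T}|X^i_t|\le R+2\sup_{t\le T}|Z^i_t|$ and $\E[e^{\delta\hat\Gamma^i_T}]\le C$ uniformly (\cref{prop: UNIFORM BOUNDS ON THE EXPONENTIAL OF GAMMA HAT T}). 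These give $\prob[\sup_t|X^i_t|>\lambda]\le Ce^{-c\lambda^2}$, and $e^{-c\lambda^2}=o(e^{-a\lambda})$ for every $a$. Hence $\E\sup_{t\le T}\langle\nu^N_t,|x|^p\ind_{\{|x|>\lambda\}}\rangle=o(e^{-a\lambda})$ uniformly in $N$.

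\emph{Step 2 (passage to the limit).} Choose a smooth cutoff $\psi_\lambda\in[0,1]$ equal to $1$ on $\{|x|\ge\lambda+1\}$ and vanishing on $\{|x|\le\lambda\}$, and set $g=|x|^p\psi_\lambda$, truncated to $g\wedge K$ (or $g$ times a compactly supported smooth cutoff $\chi_K$), so that the test function lies in $\mathscr{S}$. The key point is that the functional $\mu\mapsto\sup_{t\le T}\langle\mu_t,\phi\rangle$, for $\phi\in\mathscr{S}$, is lower semicontinuous under $M_1$ convergence. This holds because $M_1$ convergence of $\langle\nu^N,\phi\rangle$ in $D_\R$, which is implied by the construction in \citep{ledger2016skorokhod}, gives convergence of the supremum. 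Fatou's lemma then yields $\E\sup_t\langle\nu^*_t,g\chi_K\rangle\le\liminf_N\E\sup_t\langle\nu^N_t,g\rangle$. Letting $K\to\infty$ by monotone convergence, and shifting $\lambda$ by $1$, gives the claimed $o(e^{-a\lambda})$ bound. This bound also implies that $\nu^*_t$ is a genuine sub-probability measure for all $t$ simultaneously, with all polynomial moments finite uniformly in $t$; positivity and mass at most one pass to the limit by the same Fatou argument applied to nonnegative Schwartz functions.

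\emph{Step 3 (càdlag paths for polynomial $f$).} For $\phi\in\mathscr{S}$, the process $t\mapsto\langle\nu^*_t,\phi\rangle$ lies in $D_\R$ because $\nu^*\in D_{\mathscr{S}'}$. For a polynomial $f$, write $f=f\chi_K+f(1-\chi_K)$ with $f\chi_K\in\mathscr{S}$. The second part is bounded in sup norm over $t$ by $\sup_t\langle\nu^*_t,|f|\ind_{\{|x|>K\}}\rangle$, which tends to $0$ almost surely along a subsequence of $K$, since by Step 2 its expectation is $o(e^{-aK})$. Hence $\langle\nu^*,f\rangle$ is an almost sure uniform limit of càdlag functions, and is therefore càdlag.

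The main obstacle is Step 2, namely justifying rigorously that $\sup_t\langle\cdot,\phi\rangle$ is lower semicontinuous with respect to the $M_1$ topology on $D_{\mathscr{S}'}$. I would rely on the fact from \citep{ledger2016skorokhod} that $M_1$ convergence in $D_{\mathscr{S}'}$ is equivalent to $M_1$ convergence of each projection $\langle\cdot,\phi\rangle$ in $D_\R$. On $D_\R$ the supremum is continuous in $M_1$, since parametric representations preserve the range of the completed graph.
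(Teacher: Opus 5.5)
Your proposal is correct and follows essentially the same route as the paper. Both arguments proceed in three steps:
\begin{enumerate}[(i)]
\item a uniform-in-$N$ sub-Gaussian tail bound for $\frac1N\sum_i\sup_{t\le T}|X^i_t|^p\ind_{\{\sup_{t\le T}|X^i_t|>\lambda\}}$;
\item passage to $\nu^*$ through smooth compactly supported cutoffs, Fatou and Portmanteau;
\item the splitting $f=f\chi_K+f(1-\chi_K)$, which exhibits $\langle\nu^*_\cdot,f\rangle$ as an almost sure uniform-in-$t$ limit of c\`adl\`ag paths.
\end{enumerate}
In step (iii) the paper uses Borel--Cantelli, whereas you use monotonicity in $K$ plus $L^1$ convergence; these serve the same purpose. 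Your extra justifications make explicit what the paper leaves implicit: continuity of the supremum under $M_1$ via the projections $\langle\cdot,\phi\rangle$, and that each $\nu^*_t$ is a sub-probability measure. The Skorokhod representation is dispensable: Portmanteau applied to $F\wedge M$, followed by monotone convergence in $M$, already gives $\E F(\nu^*)\le\liminf_N\E F(\nu^N)$.
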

\begin{proof}
	Without loss of generality, it suffices to show the claim for $f(x) = x^p$ for some $p \in \N$. To begin, for $R,\,n \in \N$ we choose $\chi_R,\phi_n \in \mathcal{C}^{\infty}(\mathbb{R};[0,1])$ such that
	\begin{equation*}
		\chi_R = \begin{cases}
			1 \qquad &\textrm{ on } [-R,R],\\
			0 \qquad &\textrm{ on } (-R-1,R + 1)^\complement,
		\end{cases}
		\quad \textnormal{and} \quad \phi_n =\begin{cases}
			1 \qquad &\textrm{ on } [-2n,-R-1/n],\\
			0 \qquad &\textrm{ on } (-2n-1,-R)^\complement.
		\end{cases}
	\end{equation*}
	Here, $\chi_R$ is a standard cut-off function and $\phi_n$ approximates $\ind_{\{\nnnorm{x} > R\}}$ pointwise. Then for any $s,t \in [0,T]$,
	\begin{align*}
		\nnnorm{\langle\nu_t^*,x^p\rangle  -\langle\nu_s^*,x^p\rangle} 
		\le& \nnnorm{\langle\nu_t^*,x^p\rangle  -\langle\nu_t^*,x^p\chi_R\rangle} 
		+ \nnnorm{\langle\nu_s^*,x^p\rangle  -\langle\nu_s^*,x^p\chi_R\rangle} \\
		&+ \nnnorm{\langle\nu_t^*,x^p\chi_R\rangle  -\langle\nu_s^*,x^p\chi_R\rangle} \\
		=& \Romannum{1} + \Romannum{2} + \Romannum{3}.
	\end{align*}
	The strategy will be to show that with probability one, we can choose a (stochastic) $R$, independent of $t$ and $s$, such that we may make $\Romannum{1}$ and $\Romannum{2}$ as small as we like. Therefore, as $t \mapsto \langle\nu_t^*,x^p\chi_R\rangle$ has \cadlag paths, the claim follows. By definition of $\chi_R$, $\nnnorm{\langle\nu_t^*,x^p\rangle  -\langle\nu_t^*,x^p\chi_R\rangle} \le 2 \langle\nu_t^*,\nnnorm{x}^p\ind_{\{\nnnorm{x} > R\}}\rangle$. Therefore, we aim to gain control over $\sup_{t \le T} \langle\nu_t^*,\nnnorm{x}^p\ind_{\{\nnnorm{x} > R\}}\rangle$. To this end, by the definition of $\phi_n$, we have $\lim_{n \to  \infty} \sup_{t \le T} \langle\nu_t^*,\nnnorm{x}^p\phi_n\rangle = \sup_{t \le T} \langle\nu_t^*,\nnnorm{x}^p\ind_{\{\nnnorm{x} > R\}}\rangle$. Therefore, by applying Fatou's Lemma and then the Portmanteau Theorem,
	\begin{equation}\label{eq: FIRST EQUATION OF INTEREST IN THE PROOF THAT INTEGRAL OF NU STAR AGAINST A POLYNOMIAL HAS CADLAG PATHS}
		\E\sup_{t \le T} \langle\nu_t^*,\nnnorm{x}^p\ind_{\{\nnnorm{x} > R\}}\rangle \le \liminf_{n \to  \infty} \E\sup_{t \le T} \langle\nu_t^*,\nnnorm{x}^p\phi_n\rangle  = \liminf_{n \to  \infty} \limsup_{N \to  \infty} \E\sup_{t \le T} \langle\nu_t^N,\nnnorm{x}^p\phi_n\rangle.
	\end{equation}
	Furthermore, we may bound the last term in \eqref{eq: FIRST EQUATION OF INTEREST IN THE PROOF THAT INTEGRAL OF NU STAR AGAINST A POLYNOMIAL HAS CADLAG PATHS} by
	\begin{align*}
		\E\sup_{t \le T} \langle\nu_t^N,\nnnorm{x}^p\phi_n\rangle 
		&\le \E \left[\sup_{t \le T} N^{-1} \sum_{i =1}^N\nnnorm{X_t^i}^p\ind_{\{\nnnorm{X_t^i} > R\}}\right] \\
		&\le N^{-1} \sum_{i =1}^N \E \left[\sup_{t \le T} \nnnorm{X_t^i}^p\ind_{\{\sup_{t \le T} \nnnorm{X_t^i} > R\}}\right],
	\end{align*}
	where the first inequality follows from the definition of $\nu^N$ and $\phi_n$. By \cref{cor: SUBGUASSIANTY COROLLARY}, $\E[\sup_{t \le T} \nnnorm{X_t^i}^p]$ is uniformly bounded in $i$ and $N$ for every $p$ (fixed). Therefore, by applying H\"older's inequality to the final expression above and then employing \cref{cor: SUBGUASSIANTY COROLLARY}, we deduce there exist $C,\delta > 0$, independent of $N$, $R$ and $i$, such that $\E[\sup_{t \le T} \nnnorm{X_t^i}^p\ind_{\{\sup_{t \le T} \nnnorm{X_t^i} > R\}}] \le C\exp\{-\delta R^2\}$. Hence, we have shown $\E\sup_{t \le T} \langle\nu_t^*,\nnnorm{x}^p\ind_{\{\nnnorm{x} > R\}}\rangle \le C\exp\{-\delta R^2\}$. This proves the second claim. By the Borel-Cantelli Lemma, 
	\begin{equation*}
		\prob\left[\sup_{t \le T} \langle\nu_t^*,\nnnorm{x}^p\ind_{\{\nnnorm{x} > R\}}\rangle > \exp\{-\delta R^2/2\} \quad \textnormal{i.o.}\right] = 0.
	\end{equation*}
	Hence, the set 
	\begin{equation*}
		\Omega^* \coloneqq \bigcup_{R\in \N} \bigcap_{\substack{\tilde{R} \ge R\\ \tilde{R} \in \N}}\left\{\sup_{t \le T} \langle\nu_t^*,\nnnorm{x}^p\ind_{\{\nnnorm{x} > \tilde{R}\}}\rangle \le \exp\{-\delta \tilde{R}^2/2\}\right\}
	\end{equation*}
	has full measure. Now, for any $\omega \in \Omega^*$ and $\varepsilon > 0$, we may choose an $R$ large enough such that $\Romannum{1} + \Romannum{2} < \varepsilon$. Furthermore, for any fixed $R$, $t \mapsto \langle\nu_t^*,x^p\chi_R\rangle$ has \cadlag paths. Combining these two facts, it is routine to show that $t \mapsto \langle \nu^*_t, x^p\rangle (\omega)$ is \cadlag.
\end{proof}

\printbibliography

\end{document}